\newcommand{\comments}[1]{}
\newtheorem{theorem}{Theorem}
\newtheorem{definition}{Definition}
\newtheorem{lemma}{Lemma}
\newtheorem{corollary}{Corollary}
\newtheorem*{remark}{Remark}
\newtheoremstyle{claimstyle}%
   {}
   {}
   {\normalfont}
   {}
   {\itshape}
   {.}
   { }
   {\thmnote{#3}}
\theoremstyle{claimstyle}
\newtheorem*{varclaim}{}
\def\R{\mathbb{R}}
\def\C{\mathbb{C}}
\def\N{\mathbb{N}}
\def\No{\mathbb{N}_0}
\def\Z{\mathbb{Z}}
\def\Q{\mathbb{Q}}
\def\D{\mathbb{D}}
\def \s {\underline{s}}
\newcommand{\tef}{transcendental entire function}
\newcommand\qfor{\quad\text{for }}
\def\B{\mathcal{B}}
\def\Blog{\mathcal{B}_{log}}
\def\Blogn{\mathcal{B}^n_{log}}
\def\S{\mathcal{S}}
\numberwithin{equation}{section}
\numberwithin{theorem}{section}
\numberwithin{lemma}{section}
\numberwithin{definition}{section}
\def\blfootnote{\xdef\@thefnmark{}\@footnotetext}
\begin{document}
%
%
%
%
\title[Dynamics in the Eremenko-Lyubich class]{Dynamics in the Eremenko-Lyubich class}
\author{David J. Sixsmith}
\address{Dept. of Mathematical Sciences \\
   University of Liverpool \\
   Liverpool L69 7ZL, UK \\
	\\ ORCiD: 0000-0002-3543-6969}
	\email{david.sixsmith@open.ac.uk}
%
%
%
%
\begin{abstract}
The study of the dynamics of polynomials is now a major field of research, with many important and elegant results. The study of entire functions that are not polynomials -- in other words transcendental entire functions -- is somewhat less advanced, in part due to certain technical differences compared to polynomial or rational dynamics. 

In this paper we survey the dynamics of functions in the Eremenko-Lyubich class, $\B$. Among transcendental entire functions, those in this class have properties that make their dynamics most ``similar'' to the dynamics of a polynomial, and so particularly accessible to detailed study. Many authors have worked in this field, and the dynamics of class $\B$ functions is now particularly well-understood and well-developed. There are many striking and unexpected results. Several powerful tools and techniques have been developed to help progress this work. There is also an increasing expectation that learning new results in transcendental dynamics will lead to a better understanding of the polynomial and rational cases.

We consider the fundamentals of this field, review some of the most important results, techniques and ideas, and give stepping-stones to deeper inquiry.
%
%
%
\end{abstract}
\maketitle

\parskip=0.45em
\tableofcontents
\parskip=0em

%
%
%
%
\blfootnote{2010 \itshape Mathematics Subject Classification. \normalfont Primary 37F10; Secondary 30D05.}
\section{Introduction}
\subsection{Goal and motivation}
The study of the dynamics of analytic functions is classical. In some sense, the work can be dated back to Euler \cite{Euler}, who studied ``towers'' of iterates of the form $z, z^z, z^{z^z}, \ldots$. The theory of the iteration of rational maps dates back to the memoirs of Fatou \cite{MR1504787,MR1504792,MR1504797} and Julia \cite{julia1918memoire}, in the first part of the twentieth century. Fatou, later, also considered transcendental maps \cite{MR1555220}, noting that these functions are not amenable to some of the techniques that can be successfully applied to rational maps. The latter part of the same century saw considerably renewed interest in the field, partly as a result of the fascinating and insight-giving computer graphics that became available, and partly as a result of powerful new techniques and ideas, such as Sullivan's use of quasiconformal maps in his proof of the ``No Wandering Domains Theorem'' \cite{MR819553}. Polynomial dynamics, in particular, is now a very deep and well-established field; even in popular culture there is an awareness of objects such as the Mandelbrot set. There are many remarkable results in polynomial dynamics, as well as challenging open questions, and many leading mathematicians are active in this area.

The study of transcendental dynamics, as a general field, is less developed, despite its comparable age. This is as a result of two factors that make advances in transcendental dynamics more difficult than those in polynomial dynamics. The first is that for a polynomial, $P$ say, the behaviour of $P$ at points of sufficiently large modulus is easy to describe; if $|z|$ is sufficiently large, then $|P(z)| > 2|z|$, and so the sequence $z, P(z), P(P(z)), \ldots$ tends to infinity uniformly, in the spherical metric, outside a sufficiently large disc. It readily follows from this that all the ``interesting'' dynamics of $P$ happens in a compact set. Clearly the Great Picard theorem implies that this behaviour is impossible for a {\tef}.

The second reason is that in dynamics it is frequently useful to study the branches of the inverse of the function. For a polynomial there are only finitely many points where some inverse branch does not exist, and much progress can be made by considering the properties of this finite set. For a {\tef} the situation can be much more complicated, as we discuss in Section~\ref{s.sing} below. Nonetheless, we will see that there is a large class of {\tef}s for which a similar property holds.

Despite these problems, the study of the dynamics of transcendental functions is very important, for several reasons. Firstly, there are already many beautiful and striking results. For example, in transcendental dynamics a set can naturally occur which consists of a collection of disjoint lines, and with the property that the set of endpoints of the lines has Hausdorff dimension $2$, but the set of lines without endpoints has Hausdorff dimension only $1$. (This result is known as Karpi{\'n}ska's paradox; see \cite{MR1696203}, and also \cite{MR2286634} where the set of lines fills the whole plane). Secondly, as noted by Rempe-Gillen \cite{lassearclike}, recent years have seen an increasing influence of phenomena from transcendental dynamics on the fields of rational and polynomial dynamics; it is not unreasonable to expect that a better understanding of the transcendental case will lead to further insights in the polynomial and rational settings. Fourthly, transcendental entire functions can exhibit features of dynamics which are impossible for polynomials or rational map; we study some of these, such as the arc-like continua of Theorem~\ref{theo:lassearclikeexample} below, in this survey. Finally, the dynamics of transcendental entire functions has applications in physics and other sciences.

In view of the difficulties with transcendental dynamics, noted above, but also the many strong reasons to seek to develop our understanding of this field, it seems natural to seek a class of {\tef}s that have properties that make them dynamically more ``similar'' to polynomials. For reasons we discuss below, one natural class of {\tef}s with this property is the so-called Eremenko-Lyubich class, which is usually denoted by $\B$. Our goal in this survey is to give a background to the dynamics of {\tef}s in this class, discuss some of the tools and techniques in this study, and present some of the notable results that are already in place.

This survey originated in a series of talks given at the school ``Topics in complex dynamics'' held in the Universitat de Barcelona in 2017.  It will be of interest to new researchers in the subject, who want to obtain an overview of the area before moving on to the somewhat technical papers proving the main results, to researchers from other areas of mathematics, who are interested in a short introduction to the subject, and to specialists in the study of holomorphic dynamics, who may need a guide to the fast-growing literature in this area. 

We assume that the reader is already somewhat familiar with complex analysis. 
\subsection{Overview of this survey}
We begin in Section~\ref{s.intro} by introducing some of the key terms and ideas from complex dynamics, along with some examples from within the Eremenko-Lyubich class. Readers already familiar with this field may skip this section entirely.

After this section, we motivate the setting for the class $\B$ by discussing singular values in Section~\ref{s.sing}, and then by explaining the definition of the class, and its implications, in Section~\ref{s.classes}. In Section~\ref{s.transform} we discuss in detail the \emph{logarithmic transform}, which is a key tool in working with functions in this class. We then, in Section~\ref{s.results}, use the logarithmic transform to prove some notable general results for these functions.

Next we discuss some particular sub-classes of the class $\B$, and show that many important questions in transcendental dynamics can be answered in some detail when restricted to these classes. First, in Section~\ref{s.disjoint}, we focus on maps of \emph{disjoint type}, for which there is -- in a sense -- a complete (topological) classification of the Julia set. We discuss this classification and some of the results that lie behind it. 

We then turn to Eremenko's conjecture. This conjecture, from the late 1980s, has motivated much work in transcendental dynamics, and is still open. We see that a strong version of the conjecture has been shown to be false even in the class $\B$. On the other hand, we show that the strong version of the conjecture does hold for class $\B$ maps that are of \emph{finite order}. For class $\B$ maps that are both of finite order and disjoint type, we obtain a particularly detailed description of the Julia set. This is all discussed in Section~\ref{s.finorder}.

This leads us to consider hyperbolic maps, in Section~\ref{s.hyperbolic}. In any dynamical setting, the hyperbolic maps are, in some sense, the maps that should be studied first. In fact, we show that the only {\tef}s which can be considered hyperbolic lie in the class $\B$. We study the Fatou and Julia sets of maps in this class. In Section~\ref{s.others} we review other noteworthy subclasses of the class $\B$.

Finally, in Section~\ref{s.bishop}, we review different techniques for constructing maps in the class $\B$, and how these constructions have been used to give examples of maps with novel dynamical behaviour. Much of this discussion focuses on the powerful recent techniques introduced by Bishop.
\subsection{Notation}
If $z \in \C$ and $r > 0$, then we denote the open disc, with centre $z$ and radius $r$, by $B(z, r)$. We let $\D$ denote the unit disc $\D := B(0,1)$. For simplicity it is useful to define $\No := \N \cup \{0\}$.   
\section{A brief introduction to complex dynamics}
\label{s.intro}
Our goal in this section is to give a brief overview of some ideas from complex dynamics, for the reader unfamiliar with this field. Useful resources on complex dynamics are the books \cite{beardon}, \cite{MR1230383}, and \cite{MR2193309}. Useful general references on transcendental dynamics are the surveys \cite{MR1216719} and \cite{MR2648691}.

Suppose that $U \subset \C$ is a domain, and $f : U \to U$ is an analytic function. In complex dynamics we study the \emph{iterates} of $f$, defined inductively by $f^1 := f$, and \[ f^{n+1} := f \circ f^n, \qfor n \in\N. \] In this section we will assume that $U$ is the whole complex plane; in other words that $f$ is entire.  In the rest of the paper we will assume that $f$ is transcendental; in other words, not a polynomial. If $z \in \C$, then we call the set of points $\{ z, f(z), f^2(z), \ldots \}$ the \emph{orbit} of $z$.

If $z \in \C$, and there is a smallest natural number $p$ such that $f^p(z) = z$, then $z$ is called a \emph{periodic point} of \emph{order} $p$. If $p=1$, then $z$ is called a \emph{fixed point}. The iteration of points near a periodic point is determined by the \emph{multiplier}, which is equal to $\lambda_z := (f^p)'(z)$. A periodic point is called \emph{repelling}, \emph{attracting} or \emph{indifferent} when $|\lambda_z| > 1$, $|\lambda_z| < 1$ or $|\lambda_z| = 1$ respectively. 

The \emph{Fatou} set, denoted by $F(f)$, can be defined as the set of points $z \in \C$ such that the family of iterates $(f^n)_{n\in\N}$ is equicontinuous with respect to the spherical metric in a neighbourhood of $z$. In other words, at points in the Fatou set, all sufficiently small perturbations of the starting point $z$ result only in small perturbations to $f^n(z)$ (on the Riemann sphere), for all values of $n$. So the Fatou set can, roughly speaking, be characterised as the set of points at which the iterates are not ``chaotic''. The \emph{Julia} set, denoted by $J(f)$, is the complement in $\mathbb{C}$ of $F(f)$, and so is the set of points at which the iterates are ``chaotic''. It is straightforward to see that all attracting periodic points lie in the Fatou set, and all repelling periodic points lie in the Julia set \cite[Theorems 6.3.1 and Theorem 6.4.1]{beardon}. The situation for indifferent periodic points is more complicated \cite[Sections 6.5 and 6.6]{beardon}, and we omit the detail.

By the Arzel\`{a}-Ascoli theorem, the Fatou set is equivalent to the set of points in a neighbourhood of which the family of iterates is a normal family. This is somewhat less intuitive than defining the set in terms of equicontinuity. However, this does imply that results such as Montel's theorem can be used to show that a point lies in the Fatou set.

It is easy to see from the definition that the Fatou set is open, and so (when non-empty) the Fatou set consists of one or more domains known as \emph{Fatou components}. These components can be classified as follows. Suppose that $U$ is a Fatou component. If, for some minimal $p \in \N$, $f^p(U) \cap U \ne \emptyset$, then $U$ is called \emph{periodic of order $p$}. (In fact in this case $U \setminus f^p(U)$ contains at most one point \cite{MR1642181}.) If $U$ is not periodic, but for some $q \in \N$, $f^{q}(U)$ meets a periodic Fatou component, then $U$ is called \emph{preperiodic}. Otherwise $U$ is called a \emph{wandering domain}; we discuss the question of the existence of wandering domains in the class $\B$ in Subsection~\ref{subs:bishop} below.

There is a well-known classification of periodic Fatou components \cite[Theorem 6]{MR1216719}. Roughly speaking, this states that if $U$ is a periodic Fatou component of a {\tef} $f$, of period $p$, then exactly one of the following four possibilities occurs:
\begin{itemize}
\item There is an attracting periodic point $z_0 \in U$, of period $p$, such that all points of $U$ tend to $z_0$ on iteration of $f^p$; such a $U$ is called an \emph{immediate attracting basin}. Points that eventually iterate into $U$ we call the \emph{attracting basin} of $z_0$.
\item There is an indifferent periodic point $z_0 \in \partial U$, of period $p$, such that all points of $U$ tend to $z_0$ on iteration of $f^p$; such a $U$ is called an \emph{immediate parabolic basin}. Points that eventually iterate into $U$ we call the \emph{parabolic basin} of $z_0$.
\item The function $f^p$ is univalent on $U$, and $f^p$ acts as a rotation on $U$ around an indifferent periodic point; such a $U$ is called a \emph{Siegel disc}.
\item All points of $U$ iterate to infinity; such a $U$ is called a \emph{Baker domain}. 
\end{itemize}
We will see in Theorem~\ref{noescaping} that no function in the class $\B$ can have a Baker domain. We give examples of the other three possibilities; the interested reader can check these functions are in the class $\B$ once they have read Section~\ref{s.classes} below. 
\begin{itemize}
\item Choose $a \in (1, \infty)$ and let $f_1(z) := e^z - a$. It can be checked that $f_1$ has two real fixed points, $p<0<q$ say, and that $p$ is attracting. So $f_1$ has an immediate attracting basin $U$. (In fact $U = F(f)$; see Theorem~\ref{theo:disjointFatou} below.) 
\item Consider $f_2(z) := e^z - 1$. Then $f_2$ has an indifferent fixed point at the origin. We can calculate that points of small modulus close to the negative real axis iterate towards $0$, whereas points of small modulus close to the positive real axis iterate away from the origin. It can then be shown that $0 \in J(f_2)$, and $f_2$ has an immediate parabolic basin. 
\item Set $f_3(z) := \mu(e^z - 1)$, where $\mu := \exp(2 \pi i \theta)$ and $\theta := \frac{1+\sqrt{5}}{2}$. It can be shown that $0 \in F(f_3)$, and $f_3$ has a Siegel disc, which consists of points that (roughly speaking) rotate around the origin under iteration of $f_3$; see \cite{lassethesis}. 
\item Finally, it is worth noting that for $f_4(z) := e^z$, we have $J(f_4) = \C$; in other words, the exponential map itself is ``chaotic'' in the whole plane. This was first proved in \cite{MR627790}; see also \cite{MR3447747} for an elementary and conceptual proof of this result.
\end{itemize}

Elementary properties of the Fatou and Julia sets of a {\tef} are given in the following; see, for example, \cite[Lemma 1, Lemma 2, Lemma 3, Theorem 3 and Theorem 4]{MR1216719}. A set $S$ is \emph{completely invariant} if $z \in S$ if and only if $f(z) \in S$. We use these properties without comment.
\begin{theorem}
\label{FJ}
Suppose that $f$ is a {\tef}. Then:
\begin{enumerate}[(a)]
\item $F(f) = F(f^n)$ and $J(f) = J(f^n)$, for $n\geq 2$.
\item $F(f)$ and $J(f)$ are completely invariant.
\item $J(f)$ is perfect, is either the whole of $\C$ or has empty interior, and is the closure of the set of repelling periodic points of $f$.
\end{enumerate} 
\end{theorem}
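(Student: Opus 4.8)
The three assertions lie at quite different depths; I would prove them in order, using standard normal-family facts for (a) and (b), Montel's theorem for the structural parts of (c), and citing Baker's theorem for the last clause of (c). For part~(a), the inputs I would use are three elementary properties of normal families (with respect to the spherical metric): they are inherited by subfamilies, preserved under post-composition with a fixed continuous self-map of the Riemann sphere, and preserved under finite unions. Since $(f^{nk})_{k\in\N}$ is a subfamily of $(f^m)_{m\in\N}$, the inclusion $F(f)\subseteq F(f^n)$ is immediate. For the reverse inclusion, fix $z\in F(f^n)$; for $m\in\N$ write $m=nk+r$ with $0\le r<n$, so $f^m=f^r\circ f^{nk}$. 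For each fixed $r$ the family $\{f^r\circ f^{nk}:k\geq 0\}$ is normal near $z$ (post-composition of the normal family $(f^{nk})_k$ with the fixed entire map $f^r$), so $\{f^m:m\in\N\}$, being contained in the union of these finitely many families, is normal near $z$. Taking complements gives $J(f)=J(f^n)$.

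For part~(b), put $w=f(z)$. If $w\in F(f)$, choose a neighbourhood $U$ of $w$ on which $(f^m)_{m\geq 1}$ is normal; then so is $(f^k)_{k\geq 0}$, and on the open set $N:=f^{-1}(U)\ni z$ we have $f^m=f^{m-1}\circ f$, so $(f^m)$ is normal on $N$ (pre-composition with the fixed map $f|_N$), i.e.\ $z\in F(f)$. Conversely, if $z\in F(f)$, choose a disc $N\ni z$ on which $(f^m)$ is normal; then $f(N)$ is an open neighbourhood of $w$ by the open mapping theorem, and I would transfer normality to a slightly shrunken image: for $N'$ with $z\in N'$ and $\overline{N'}\subset N$, and $K\subseteq f(N')$ compact, the set $L:=f^{-1}(K)\cap\overline{N'}$ is compact with $f(L)\supseteq K$, and uniform convergence of a subsequence $f^{m_j+1}\to h$ on $L$ yields uniform convergence of $f^{m_j}$ on $K$ (the limit is well defined on $K$ because $f^{m_j+1}$ is constant along the fibres of $f|_L$). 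Hence $w\in F(f)$. Complementation shows $J(f)$ is completely invariant as well; in particular $f(J(f))\subseteq J(f)$, which I use below.

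For the structural statements in part~(c): that $J(f)$ is closed is immediate, since $F(f)$ is open by definition. Everything else rests on the \emph{blowing-up property}: if $U$ is open with $U\cap J(f)\ne\emptyset$, then $\C\setminus\bigcup_{n\geq 1}f^n(U)$ contains at most one point --- otherwise every $f^n|_U$ would omit two fixed values, so Montel's theorem would make $(f^n)$ normal on $U$, contradicting $U\cap J(f)\ne\emptyset$. (I would recall from \cite{MR1216719} that the possibly omitted point lies in a set $E(f)$ with $|E(f)|\leq 1$, and that $J(f)$ is infinite.) For the dichotomy: if $J(f)$ contains a disc $D$, then $\C\setminus E(f)\subseteq\bigcup_n f^n(D)\subseteq J(f)$, using $f^n(J(f))\subseteq J(f)$; since $J(f)$ is closed this forces $J(f)=\C$. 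For perfectness: if $z_0\in J(f)$ were isolated, with $U\cap J(f)=\{z_0\}$ and hence $U\setminus\{z_0\}\subseteq F(f)$, then $f^n(U\setminus\{z_0\})\subseteq F(f)$ by forward invariance, so $f^n(U)\cap J(f)\subseteq\{f^n(z_0)\}$; combined with the blowing-up property this gives $J(f)\setminus E(f)\subseteq\{f^n(z_0):n\geq 1\}$. If $z_0\notin E(f)$, then $z_0$ is periodic and $\{f^n(z_0):n\geq 1\}$, hence $J(f)$, is finite --- a contradiction; and the residual case $E(f)=\{z_0\}$ is excluded because then $J(f)\setminus\{z_0\}$ is a non-empty perfect (hence uncountable) set contained in the countable set $\{f^n(z_0):n\geq 1\}$.

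Finally, for $J(f)=\overline{\{\text{repelling periodic points}\}}$, the inclusion $\supseteq$ is easy: if $z_0$ is a repelling periodic point of period $p$, then with $g:=f^p$ we have $g^n(z_0)=z_0$ for all $n$ while $(g^n)'(z_0)=\lambda_{z_0}^n\to\infty$, which is incompatible with $(g^n)$ being normal near $z_0$; so $z_0\in J(g)=J(f^p)=J(f)$, and $J(f)$ is closed. The reverse inclusion is Baker's theorem (see \cite{MR1216719}); I would present only its strategy --- given $z_0\in J(f)$ and a small disc $U\ni z_0$, one uses the blowing-up property and the fact that a {\tef} omits at most one value to find, for a suitable $n$, a fixed point of $f^n$ in $U$, and then argues that it may be taken repelling. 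This last step --- ruling out the attracting and indifferent alternatives --- is the real obstacle; it is not a routine normal-families manipulation, and in a survey I would cite it rather than reprove it.
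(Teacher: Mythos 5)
The paper does not prove this theorem; it is stated with a reference to Bergweiler's survey, so there is no argument of the paper's to compare against. On its own merits, your treatment of (b) is correct, and in particular the forward-invariance step, where you push normality forward by factoring $f^{m_j}$ through the fibres of $f$ rather than post-composing, is exactly the right manoeuvre for a transcendental map. Your treatment of (c) is also essentially sound, although the final sentence on perfectness asserts without justification that $J(f)\setminus\{z_0\}$ is perfect; what your first case actually establishes is that no isolated point of $J(f)$ can lie outside $E(f)$, so that $J(f)\setminus E(f)$ is a closed set with no isolated points, and the argument should be phrased that way. Deferring the density of repelling periodic points to Baker's theorem is reasonable in a survey.

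Part (a), however, contains a genuine gap, and it occurs at precisely the place where transcendental functions differ from rational ones. You state, correctly, that normality is preserved under post-composition with a fixed \emph{continuous self-map of the Riemann sphere}, but then apply this to the map $f^r$. For transcendental entire $f$ the iterate $f^r$ has an essential singularity at $\infty$, so it admits no continuous extension to $\widehat{\C}$, and post-composition with $f^r$ can destroy normality whenever $(f^{nk})_k$ has $\infty$ among its locally uniform limits --- which cannot be ruled out on $F(f^n)$. As a concrete illustration that the general principle fails: $g_k(z)=k(2+z)$ is a normal family on $\D$ (it tends locally uniformly to $\infty$), yet $\sin\circ g_k$ is not, since $\sin(g_k(z))$ stays in $[-1,1]$ on the real segment $\D\cap\R$ while $|\sin(g_k(iy))|^2=\sin^2(2k)+\sinh^2(ky)\to\infty$ for $y\neq 0$, so no subsequence can converge locally uniformly to an analytic function or to $\infty$. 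Your factoring-through-fibres device from (b) does not directly repair this either: writing $f^m=f^{nk}\circ f^r$ would require normality of $(f^{nk})_k$ near $f^r(z)$, which is not yet available. The cleanest repair in the transcendental setting is to deduce (a) \emph{from} the last clause of (c): a short multiplier computation shows that a point is a repelling periodic point of $f$ if and only if it is a repelling periodic point of $f^n$, whence $J(f)=\overline{\{\text{repelling periodic points of }f\}}=\overline{\{\text{repelling periodic points of }f^n\}}=J(f^n)$.
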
 
\section{The role of singular values}
\label{s.sing}
\subsection{Motivation}
Suppose that $f : U \to V$ is analytic, and that $z \in V$. An important question in dynamics is the following; if $\Delta \subset V$ is a sufficiently small neighbourhood of $z$, can we define all inverse branches of $f$ in $\Delta$? It is well known that points where this is \emph{not} possible are particularly significant with regard to dynamics. 

As noted earlier, for polynomials these points, together with their orbits, determine the general features of the global dynamics. This is a very strong property because, as we shall see, these points are the images of points where the derivative is zero, and there are necessarily only finitely many of these. 

The situation for {\tef}s is, in general, much more complicated. To give an extreme example, there is a {\tef} for which the set of points in a neighbourhood of which some inverse branch cannot be defined is, in fact, the whole complex plane; see \cite{MR1511920}.
\subsection{The classification of singular values}
We need to be a little more precise, and so use a definition of singularities originally due to Iversen \cite{Iversen}; see also, for example, \cite{MR1344897}. For simplicity, we suppose that $f : \C \to \C$ is entire. Pick a point $w \in \C$. Suppose that for each $r > 0$, we have a method of choosing a component $U(r)$ of $f^{-1}(B(w, r))$ in such a way that $0 < r' < r$ implies that $U(r') \subset U(r)$. We let $r \rightarrow 0$; in other words, we study the intersection $U := \bigcap_{r>0} U(r)$. There are now three possibilities.

\begin{enumerate}[(a)]
\item $U = \{z\}$, a singleton, and $f'(z) \ne 0$. It follows by the inverse function theorem that, for sufficiently small values of $r>0$, $f$ is a homeomorphism from $U(r)$ to $B(w, r)$; we have obtained a branch of inverse locally. In this case $z$ is called a \emph{regular point}.
\item $U = \{z\}$, a singleton, and $f'(z) = 0$. It then follows that there exist $d > 1$ and $r>0$ such that $f$ is a $d$-to-$1$ map from $U(r)$ to $B(w, r)$, and there is no local inverse branch. In this case $z$ is called a \emph{critical point} and $w$ is called a \emph{critical value}.
\item $U = \emptyset$. In this case we call $w$ a \emph{finite asymptotic value}. This case cannot arise for a polynomial, but it is possible for a {\tef} (consider the exponential function). 
In this case there is no possibility to define an inverse branch; for sufficiently small values of $r$, the preimage $U(r)$ contains either zero or infinitely many preimages of the point $w$.
\end{enumerate}

We write $CP(f)$ for the set of critical points of $f$, $CV(f)$ for the set of critical values, and $AV(f)$ for the set of finite asymptotic values. We then define the set of singular values by
\[
S(f) := \overline{CV(f) \cup AV(f)}.
\]

The significance of this definition is as follows. If $z \notin S(f)$, then there is a neighbourhood $U$ of $z$ on which \emph{every} inverse branch of $f$ can be defined. In other words, $S(f)$ is the smallest closed set $S$ such that $$f : \C \setminus f^{-1}(S) \to \C \setminus S$$ is a covering map. Here, if $U, V$ are subdomains of $\C$, then $f : U \to V$ is a \emph{covering map} if it is continuous, and for each $z \in V$, there is a neighbourhood $\Delta$ of $z$ such that $f^{-1}(\Delta)$ is a union of disjoint open sets, each of which is mapped homeomorphically by $f$ onto $\Delta$. The covering is \emph{universal} if $U$ is simply connected. Covering maps have certain strong properties; see, for example, \cite{MR1185074} for more detail.

\begin{remark}\normalfont
We stress that, roughly speaking, $w \in S(f)$ means that there is \emph{an} inverse branch that cannot be continued through $w$. This does not mean that there might not be \emph{some} inverse branches which can be continued locally. For example, the function $$f(z) := z e^z$$ has two preimages of a small neighbourhood of the origin. It is easy to see that an inverse branch exists to one preimage, but not to the other.
\end{remark}
\subsection{The postsingular set}
In dynamics, it is often useful to consider the \emph{postsingular} set, which is defined by
\[
P(f) := \overline{\bigcup_{n \geq 0} f^n(S(f))}.
\]

The significance of this definition is as follows. If $z \notin P(f)$, then there is a neighbourhood $U$ of $z$ with the property that all inverse branches can be defined for \emph{all} iterates of $f$.
\section{The classes $\S$ and $\B$}
\label{s.classes}
\subsection{Definitions}
We want to study those {\tef}s whose set of singular values is easy to deal with. The following definition, therefore, is natural. 
\begin{definition}
\label{def:s}
A {\tef} $f$ is in the Speiser class $\mathcal{S}$ if $S(f)$ is finite.
\end{definition}
The exponential map and the cosine map are readily seen to lie in this class. The Speiser class is an analogue of the class of polynomials; both classes of maps have only finitely many singular values. The key difference is that maps in the Speiser class have an essential singularity at infinity, whereas, as observed earlier, for a polynomial infinity is an attracting fixed point. The study of the class $\S$ (also known as functions of \emph{finite type}) is classical; systematic study of functions in this class was undertaken by Nevanlinna, Teichm\"uller and Speiser. The study of the dynamics of these functions was initiated in \cite{MR769199} and \cite{MR857196}.
 
For many purposes, however, Definition~\ref{def:s} is unnecessarily restrictive, and the following is more useful. 
\begin{definition}
A {\tef} $f$ is in the Eremenko-Lyubich class $\mathcal{B}$ if $S(f)$ is bounded.
\end{definition}
The class $\B$ (also known as functions of \emph{bounded type}) seems to have been first introduced by Eremenko and Lyubich \cite{MR1196102}, where they studied the dynamics of functions in this class. The focus of this survey is on functions in the class $\B$. It is clear that $\mathcal{S} \subset \mathcal{B}$. Moreover, this inclusion is strict; see, for example, the function $f(z) := z^{-1}\sin z$, discussed below. 

Note that both these classes of functions are closed under composition; this statement follows from the observation that if $f = g \circ h$, then
\begin{equation}
\label{compos}
CV(f) = g(CV(h)) \cup CV(g) \quad \text{and} \quad AV(f) = g(AV(h)) \cup AV(g).
\end{equation}

We should note that these classes are not small. For example, Bishop \cite{Bish3} gave a very general construction of {\tef}s with no finite asymptotic values, and critical values $\pm 1$; in other words, these functions all lie in the class $\S$. We discuss this construction in Section~\ref{s.bishop}.
\subsection{Examples}
We give some examples; the details in each case are left as an exercise.

\begin{itemize}
\item Let $f(z) := \lambda \exp z$, for some $\lambda \in \C\setminus \{0\}$. Then $$CV(f) = \emptyset, \ S(f) = AV(f) = \{0\}.$$ This family of functions, which lies in the class $\S$, is known as the exponential family. These are, in a sense, the entire maps with the simplest dynamics after the polynomials, and they have been very widely studied; see, for example, \cite{MR758892,MR1053806,MR627790,lassethesis,MR1956142}.
\item Let $f(z) := a e^z + b e^{-z}$, for $a, b \in \C\setminus\{0\}$. Then $$AV(f) = \emptyset, \ S(f) = CV(f) = \left\{\pm2\sqrt{ab}\right\}.$$ This family of functions, which also lies in the class $\S$, is known as the cosine family and has also been widely studied; see, for example, \cite{MR2458810, MR2286634}.
\item Let $f(z) := z^2 \exp(-z^2)$. Then $AV(f) = \{0\}$, $CV(f) = S(f) = \{ 0 , 1/e\}$. This function again is in class $\S$. Observe that a finite asymptotic value can also be a critical value.
\item Let $f(z) := z^{-1}\sin z$. Then $AV(f) = \{0\}$, and $CV(f)$ is an infinite set of real numbers, all of modulus not greater than one.  This is an example of a function in class $\B$ but not in class $\S$. See also \cite[p.286]{nevanlinna}
\item Let $f(z) := \int_0^z \exp(-e^t) \ dt$. Then $CV(f) = \emptyset$. However, see \cite{MR1642181}, there is an $\alpha \in \R$ (which can be calculated) such that $AV(f) = \{ \alpha + 2k\pi i : k \in \Z \}$. This function is not in class $\B$, but illustrates a function with infinitely many finite asymptotic values. It can be shown, using \eqref{compos}, that the function $$g(z) := \exp((f(z)-\alpha)^2)$$ is a class $\B$ function such that $AV(g)$ is an infinite set.
\end{itemize}
\section{The logarithmic transform}
\label{s.transform}
\subsection{Definition}
In this section we discuss the construction of the logarithmic transform, which is a central technique when working with dynamics in the class $\B$. The transform was first used in dynamics in \cite{MR1196102}, although a similar change of variable appears in the work of Teichm\"uller and also (as observed in \cite{MR2885574}) in the work of Speiser himself \cite[p.295]{MR1509398}. Note that we will make a number of statements which we will not fully justify; these are left as an exercise.

Suppose that $f \in \mathcal{B}$. Let $D$ be a Jordan domain that contains the singular values of $f$, as well as the set $\{0, f(0)\}$. Note, by \cite[Proposition 2.9]{MR3433280}, that $f^{-1}(D)$ is connected. Set $W := \C \setminus \overline{D}$ and $\mathcal{V} := f^{-1}(W)$. It follows that the components of $\mathcal{V}$, which are called \emph{tracts of $f$}, are Jordan domains the boundary of which passes through infinity. The function $f$ is a universal covering from each of these components to $W$. Note that this is already a useful property of functions in class $\B$.

Now set $H := \exp^{-1}(W)$ and $\mathcal{T} := \exp^{-1}(\mathcal{V})$; see Figure~\ref{f1}. (Note that the notation of Figure~\ref{f1} will be used extensively throughout this paper.) Each component of $\mathcal{T}$ is simply connected, with boundary homeomorphic to $\R$, and is known as a \emph{tract of $F$}. 

\begin{figure}
	\includegraphics[width=14cm,height=10cm]{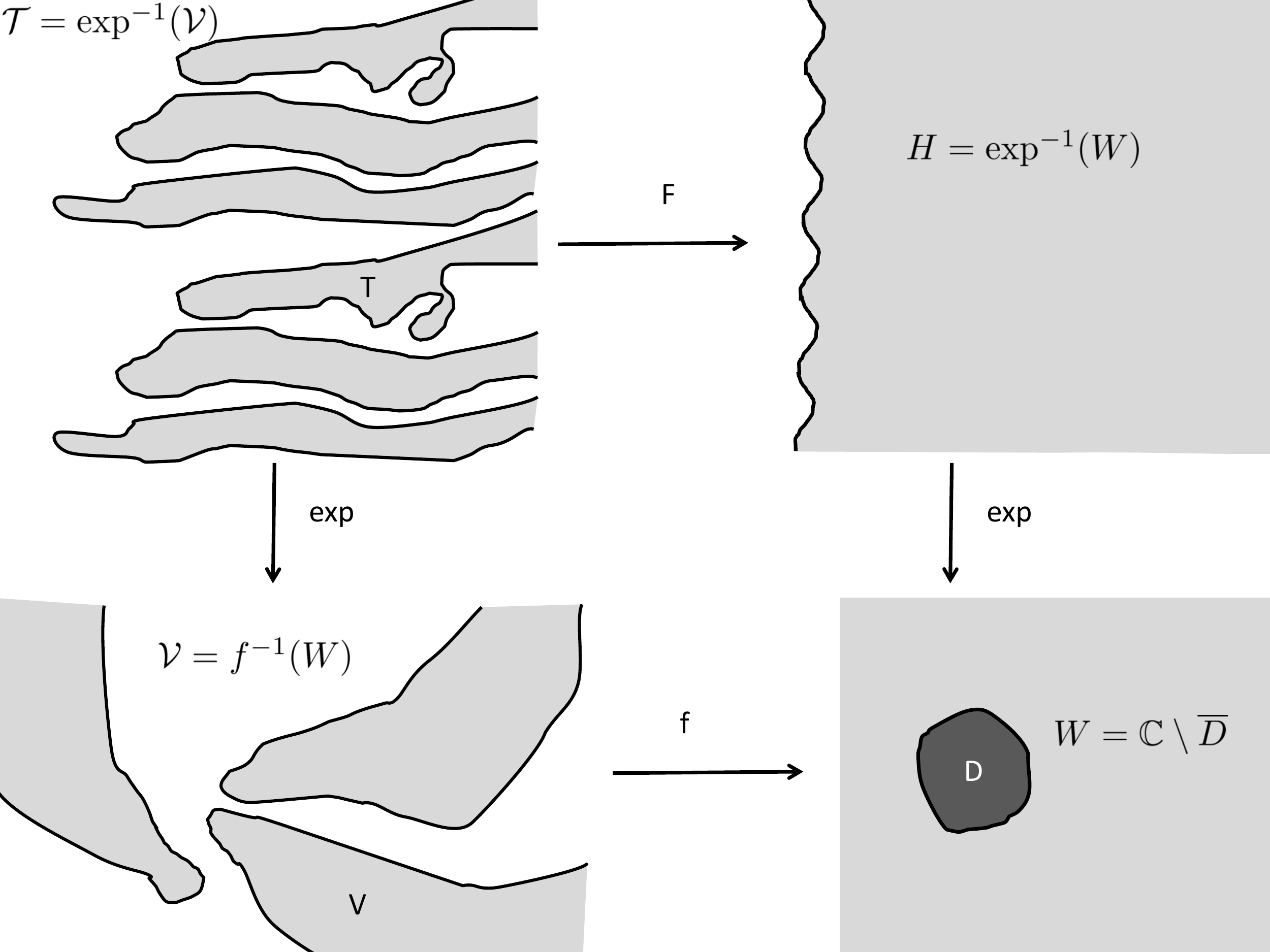}
  \caption{An illustration of the logarithmic transform}\label{f1}
\end{figure}

We can then \emph{lift} $f$ to a map $F : \mathcal{T} \to H$ satisfying $\exp \circ F = f \circ \exp$, and which can be chosen to be $2 \pi i$ periodic. The existence of this map follows from the fact that $f$ is a universal covering of $W$, which (on the Riemann sphere) is topologically a punctured disc; it is a well-known result of the theory of covering maps that such functions can be factored in this way \cite[Theorem 5.10]{MR1185074}. We call $F$ a \emph{logarithmic transform} of $f$; note that $F$ depends not only on $f$ but also on the choice of the domain $D$.
\subsection{Properties of a logarithmic transform}
\label{subs:properties}
We have the following properties:

\begin{enumerate}[(A)]
\item $H$ is a $2 \pi i$ periodic unbounded Jordan domain containing a right half-plane. \label{log1}
\item $\mathcal{T} \ne \emptyset$ is $2 \pi i$ periodic, and the real part is bounded from below in $\mathcal{T}$ but unbounded from above.
\item Each component $T$ of $\mathcal{T}$ is an unbounded Jordan domain disjoint from all its $2 \pi i$ translates.
\item For each such $T$, the map $F : T \to H$ is a conformal isomorphism. ($T$ is called a tract of $F$).\label{iso} 
\item It follows from the Carath\'eodory-Torhorst theorem \cite[Theorem 2.1]{MR1217706} that we may extend this function continuously to the closure of $\mathcal{T}$ in the Riemann sphere, and then we also have that $F(\infty) = \infty$.
\item The components of $\mathcal{T}$ accumulate only at infinity; in other words, if $(z_n)_{n\in\N}$ is a sequence of points of $\mathcal{T}$, each belonging to a different component of $\mathcal{T}$, then $z_n \rightarrow\infty$ as $n\rightarrow\infty$.\label{log6}
\end{enumerate}

One of the important reasons for using the transform is property \eqref{iso}. Although $f$ is just a covering map, the transform $F$ is conformal on each tract. In particular, this means that for each tract $T$, $F|_T$ has an inverse, which we denote by $$F^{-1}_T : H \to T.$$ We stress that we use this notation frequently in the following.

\subsection{The class $\Blog$}
We denote by $\Blog$ the class of those functions that satisfy properties \eqref{log1}--\eqref{log6}, irrespective of whether they are the transforms of a {\tef}. Many results regarding the dynamics of functions in class $\B$ are derived from results regarding the dynamics of functions in class $\Blog$, and so it is often appropriate to state them in this more general context.

Note that, using the Riemann mapping theorem, it is straightforward to create functions in class $\Blog$ simply by writing down definitions of subsets of the plane, $\mathcal{T}$ and $H$, that meet the geometric conditions above. It is a striking fact that, with the additional condition that $\overline{\mathcal{T}} \subset H$, we can ''recover'' a {\tef} $f$ with the ``same'' dynamics of $F$. We discuss this in further detail in Section~\ref{s.bishop}.
\subsection{Results from complex analysis}
\label{subs:hyperbolic}
Since we will use it twice, we note first the well-known Koebe quarter theorem; see, for example, \cite[Theorem 4.1]{MR2492498}.
\begin{theorem}
Suppose that $\phi : \D \to \C$ is conformal. Then $$\phi(\D) \supset B(\phi(0), |\phi'(0)|/4).$$
\end{theorem}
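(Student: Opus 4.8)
The plan is to reduce to the normalized (univalent) case and then invoke Bieberbach's coefficient bound. First I would compose $\phi$ with an affine map so as to assume $\phi(0)=0$ and $\phi'(0)=1$; the claim then becomes $\phi(\D) \supset B(0,1/4)$, and the general statement is recovered by undoing the normalization at the end. Write $\phi(z) = z + a_2 z^2 + a_3 z^3 + \cdots$.

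The first key step is the classical \emph{area theorem}: if $g(\zeta) = \zeta + b_0 + b_1 \zeta^{-1} + b_2 \zeta^{-2} + \cdots$ is univalent on $\{|\zeta| > 1\}$, then $\sum_{n \geq 1} n |b_n|^2 \leq 1$; this follows by computing, via Green's theorem, the (nonnegative) area of the complement of the image of $\{|\zeta| > R\}$ and letting $R \to 1$. In particular $|b_1| \leq 1$. From this I would deduce Bieberbach's inequality $|a_2| \leq 2$ by the square-root transformation: since $\phi$ is univalent with a simple zero only at $0$, the function $h(z) := \sqrt{\phi(z^2)} = z + \tfrac{a_2}{2} z^3 + \cdots$ is a well-defined odd univalent map on $\D$, so $g(\zeta) := 1/h(1/\zeta) = \zeta - \tfrac{a_2}{2}\zeta^{-1} + \cdots$ is univalent on $\{|\zeta| > 1\}$, and the area theorem gives $|a_2/2| \leq 1$.

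The second step exploits an omitted value. Suppose $w \notin \phi(\D)$. The Möbius map $\tau(\zeta) := \zeta/(1 - \zeta/w)$ is injective on $\C \setminus \{w\}$, so $\tau \circ \phi$ is univalent on $\D$, with expansion $z + (a_2 + 1/w) z^2 + \cdots$; applying Bieberbach's bound to it gives $|a_2 + 1/w| \leq 2$. Combining with $|a_2| \leq 2$ by the triangle inequality yields $|1/w| \leq 4$, i.e. $|w| \geq 1/4$. Since every omitted value has modulus at least $1/4$, we obtain $B(0, 1/4) \subset \phi(\D)$; rescaling by the affine normalization then gives $B(\phi(0), |\phi'(0)|/4) \subset \phi(\D)$.

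The main obstacle is the passage from the area theorem to $|a_2| \leq 2$ — in particular the square-root trick and the verification that $h$ is univalent — together with the area theorem itself; the remaining deductions are routine. In a survey one could instead simply cite Bieberbach's inequality from a standard text and present only the short final argument with the omitted value.
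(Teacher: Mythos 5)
Your proof is correct: it is the standard argument for the Koebe quarter theorem, passing from the area theorem to Bieberbach's bound $|a_2|\le 2$ via the square-root transform, and then bounding any omitted value $w$ by applying Bieberbach to the renormalized map $\tau\circ\phi$ with $\tau(\zeta)=\zeta/(1-\zeta/w)$. The paper itself gives no proof of this statement; being a survey, it simply cites it as \cite[Theorem 4.1]{MR2492498}, so there is nothing in the paper's argument to compare against, and your suggestion at the end of your write-up --- to cite Bieberbach and present only the short omitted-value step --- matches what a survey would naturally do.
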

It is easy to see how this result can be modified for a conformal map from any disc.

%
We also need to introduce, at this point, a brief summary of the properties of the hyperbolic metric; the interested reader will find more detail in \cite{MR2492498}. First, in the disc $\D$ we define the hyperbolic density by
\[
\rho_\D(z) := \frac{2}{1-|z|^2}, \qfor z \in \D.
\]

Now suppose that $U$ is a simply connected proper subdomain of $\C$. By the Riemann mapping theorem, there is a conformal map $\phi : U \to \D$. We then define the hyperbolic density in $U$ by
\[
\rho_U(z) := \rho_\D(\phi(z)) |\phi'(z)|, \qfor z \in U.
\]
It can be shown, see \cite[p.25]{MR2492498}, that this definition is independent of the choice of $\phi$. 

We use the hyperbolic density to define the hyperbolic length of a piecewise smooth curve $\gamma \subset U$ by
\[
\ell_U(\gamma) := \int_\gamma \rho_U(z) \ |dz|,
\]
and then define the hyperbolic distance between two points $z_1, z_2 \in U$ by
\[
d_U(z_1, z_2) := \inf_{\gamma} \ell_U(\gamma),
\]
where the infimum is taken over all piecewise smooth curves $\gamma$ that join $z_1$ to $z_2$ in $U$.

The Schwarz-Pick Lemma \cite[Lemma 6.4]{MR2492498} states that if $f : U \to V$ is an analytic map between simply connected proper subdomains of $\C$, then
\[
d_V(f(z_1), f(z_2)) \leq d_U(z_1, z_2), \quad\text{and}\quad \rho_V(f(z_1)) |f'(z_1)| \leq \rho_U(z_1), \qfor z_1, z_2 \in U,
\]
with equality when $f$ is an analytic bijection, and strict inequality otherwise. In particular, if $f$ is conformal, then
\begin{equation}
\label{eq.conformal}
d_{f(U)}(f(z_1), f(z_2)) = d_U(z_1, z_2), \qfor z_1, z_2 \in U.
\end{equation}

If $U \subsetneq V$, then the Schwarz-Pick Lemma applied to the identity map gives that
\begin{equation}
\label{eq.SP}
\rho_V(z) < \rho_U(z), \qfor z \in U.
\end{equation}

The hyperbolic density at a point is closely related to its distance to the boundary. Indeed, see \cite[Theorems 8.2 and 8.6]{MR2492498}, we have the following \emph{standard estimate} on the hyperbolic density in a simply connected proper subdomain of $\C$; 
\begin{equation}
\label{eq.hypest}
\frac{1}{2\operatorname{dist}(z, \partial U)} \leq \rho_U(z) \leq \frac{2}{\operatorname{dist}(z, \partial U)}, \qfor z \in U.
\end{equation}
Here, for a point $z$ and a set $W$, we use dist$(z, W)$ to denote the Euclidean distance dist$(z, W) := \inf_{w \in W} |z-w|$. Note that the right-hand inequality is a consequence of the Schwarz-Pick Lemma, and the left-hand inequality is deduced using the Koebe quarter theorem.

Equations \eqref{eq.conformal} and \eqref{eq.hypest} have important implications for the mapping of points under $F \in \Blog$, which is a conformal map from each tract to $H$. Very roughly speaking, suppose that $z_1, z_2$ both lie in the same tract $T$. Since $T$ is disjoint from its $2 \pi i$ translates, any curve from $z_1$ to $z_2$ in $T$ must necessarily stay ``close'' to the boundary of $T$. However, if $F(z_1)$ and $F(z_2)$ have large real parts, then a curve from $F(z_1)$ to $F(z_2)$ can be chosen ``far'' from the boundary of $H$. It then follows from \eqref{eq.conformal} and \eqref{eq.hypest} that $|F(z_1) - F(z_2)|$ must be large compared to $|z_1 - z_2|$.
\section{Some important general results in the class $\B$}
\label{s.results}
\subsection{An expansion property}
The following simple property of functions in the class $\Blog$, given in \cite{MR1196102}, is fundamental, since it says that $F$ is \emph{expanding} at points where it is of sufficiently large real part.
\begin{lemma}
\label{lemm:expansion}
Suppose that $F : \mathcal{T} \to H$ is in the class $\Blog$. Suppose that $R \in \R$ is such that $\{ w \in \C : \operatorname{Re} w > R \} \subset H$. Then
\begin{equation}
\label{eq.expansion}
|F'(w)| \geq \frac{1}{4\pi}(\operatorname{Re} F(w) - R), \qfor w \in \mathcal{T} \text{ such that } \operatorname{Re} F(w) > R.
\end{equation}
\end{lemma}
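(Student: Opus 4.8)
The plan is to compare hyperbolic densities on the tract $T$ containing $w$ and on the half-plane $H$, using the fact that $F|_T : T \to H$ is a conformal isomorphism. By the conformal invariance \eqref{eq.conformal}, applied to $F_T^{-1}$ (or equivalently the Schwarz--Pick equality for conformal maps), we have $\rho_H(F(w))\,|F'(w)| = \rho_T(w)$ for every $w \in T$, which rearranges to
\[
|F'(w)| = \frac{\rho_T(w)}{\rho_H(F(w))}.
\]
So the task reduces to bounding $\rho_T(w)$ from below and $\rho_H(F(w))$ from above, both via the standard estimate \eqref{eq.hypest}.

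For the numerator, the key geometric input is property (C): the tract $T$ is disjoint from all its $2\pi i$ translates. Hence for any $z \in T$, the points $z + 2\pi i$ and $z - 2\pi i$ lie in $\C \setminus T$, so $\operatorname{dist}(z, \partial T) \le 2\pi$. The left-hand inequality of \eqref{eq.hypest} then gives
\[
\rho_T(w) \ge \frac{1}{2\operatorname{dist}(w,\partial T)} \ge \frac{1}{4\pi}.
\]
For the denominator, the hypothesis is that the half-plane $\{\operatorname{Re} \zeta > R\}$ is contained in $H$. If $\operatorname{Re} F(w) > R$, then the disc centred at $F(w)$ of radius $\operatorname{Re} F(w) - R$ lies in this half-plane, hence in $H$, so $\operatorname{dist}(F(w), \partial H) \ge \operatorname{Re} F(w) - R$. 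The right-hand inequality of \eqref{eq.hypest} then gives
\[
\rho_H(F(w)) \le \frac{2}{\operatorname{dist}(F(w), \partial H)} \le \frac{2}{\operatorname{Re} F(w) - R}.
\]
Dividing the two bounds yields $|F'(w)| \ge \frac{1}{4\pi}(\operatorname{Re} F(w) - R)$, which is exactly \eqref{eq.expansion}.

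I do not expect a serious obstacle here; the argument is a direct assembly of the tools collected in Section~\ref{s.transform}. The only point requiring a little care is making sure the conformal-invariance identity is applied in the correct direction: since $F|_T$ is a conformal bijection $T \to H$, equality holds in the second Schwarz--Pick inequality, giving $\rho_H(F(w))|F'(w)| = \rho_T(w)$ rather than an inequality. One should also note that the hypothesis $\operatorname{Re} F(w) > R$ is precisely what makes $\operatorname{dist}(F(w),\partial H)$ positive and the bound meaningful, and that $w \in \mathcal{T}$ means $w$ lies in some component $T$, so property (C) applies to that $T$. Everything else is the two-sided estimate \eqref{eq.hypest}.
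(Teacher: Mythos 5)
Your plan is sound and rests on the same mechanism as the paper's proof (the left half of \eqref{eq.hypest} is Koebe in disguise, as the paper notes), but the arithmetic as written does not close. From your stated bounds $\rho_T(w) \geq \frac{1}{4\pi}$ and $\rho_H(F(w)) \leq \frac{2}{\operatorname{Re} F(w)-R}$, the quotient gives only
\[
|F'(w)| \;=\; \frac{\rho_T(w)}{\rho_H(F(w))} \;\geq\; \frac{1/(4\pi)}{2/(\operatorname{Re} F(w)-R)} \;=\; \frac{\operatorname{Re} F(w)-R}{8\pi},
\]
which is weaker than \eqref{eq.expansion} by a factor of two.

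The lost factor sits in your estimate $\operatorname{dist}(w,\partial T) \leq 2\pi$, which is not sharp. Using disjointness from \emph{both} translates $T \pm 2\pi i$ at once yields $\operatorname{dist}(w,\partial T) \leq \pi$: if $\operatorname{dist}(w,\partial T) > \pi$, then $\overline{B(w,\pi)} \subset T$ and so $\overline{B(w+2\pi i,\pi)} \subset T + 2\pi i$; these two closed discs meet at $w+\pi i$, contradicting $T \cap (T+2\pi i) = \emptyset$. With that, $\rho_T(w) \geq \frac{1}{2\pi}$ and the division produces $\frac{1}{4\pi}(\operatorname{Re} F(w)-R)$ as required. (An alternative fix keeps your $2\pi$ bound but sharpens the $H$-estimate: since $\{\operatorname{Re}\zeta > R\}$ is a genuine half-plane, one has $\rho_{\{\operatorname{Re}\zeta>R\}}(z) = \frac{1}{\operatorname{Re} z - R}$, and then monotonicity \eqref{eq.SP} gives $\rho_H(F(w)) \leq \frac{1}{\operatorname{Re} F(w)-R}$, again recovering the missing factor of two.) The paper's own proof sidesteps the bookkeeping by applying Koebe directly to $F_T^{-1}$ on the disc $B(F(w), \operatorname{Re} F(w)-R)$ and then invoking the $\pi$ bound on discs in $T$ in a single step; the two arguments are otherwise the same.
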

\begin{proof}
Choose a component $T$ of $\mathcal{T}$. Suppose that $w \in T$ is such that $\operatorname{Re} F(w) > R$. Note that
\[
B(F(w), \operatorname{Re} F(w) - R) \subset H.
\]
Set $z= F(w)$ and $\rho = \operatorname{Re} z - R$. By applying the Koebe quarter theorem to $F_T^{-1}$ we deduce that $T$ contains a disc with centre $w$ and radius $\frac{1}{4}|(F_T^{-1})'(z)|\rho$. However, we know that $T$ cannot contain a disc of radius $\pi$. The result then follows by a calculation.
\end{proof}
\begin{remark}\normalfont
Suppose that $F$ is a logarithmic transform of a {\tef} $f$. Then \eqref{eq.expansion} says that the quantity $|zf'(z)/f(z)|$ tends to infinity as $f(z)$ tends to infinity. (This quantity is actually the derivative of $f$ in the cylinder metric.) In \cite{MR3233575} it was shown that this property, in fact, characterises the Eremenko-Lyubich class, in the sense that if $f \notin \B$, then 
\[
\liminf_{R\rightarrow\infty} \left\{ \left|z\frac{f'(z)}{f(z)}\right| : |f(z)| > R \right\} = 0.
\]
\end{remark}
\subsection{No escaping points in the Fatou set}
The escaping set of a {\tef} $f$ is defined by
\[
I(f) := \{ z \in \C : f^n(z) \rightarrow \infty \text{ as } n \rightarrow \infty\}.
\]
Eremenko \cite{MR1102727} showed that $J(f) \cap I(f) \ne \emptyset$ and that $J(f) = \partial I(f)$. There are many {\tef}s for which $F(f) \cap I(f) \ne \emptyset$; for example, it can be easily shown that for the function $f(z) := 1 + z + e^{-z}$, every point in the right half-plane lies in $F(f) \cap I(f)$. The following theorem, from \cite{MR1196102}, uses Lemma~\ref{lemm:expansion} to show that this is impossible in the class $\B$.
\begin{theorem}
\label{noescaping}
Suppose that $f \in \B$. Then $J(f) = \overline{I(f)}$.
\end{theorem}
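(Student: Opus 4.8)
The plan is to prove the two inclusions $J(f)\subseteq\overline{I(f)}$ and $\overline{I(f)}\subseteq J(f)$ separately. The first is free: Eremenko's result \cite{MR1102727}, quoted above, gives $J(f)=\partial I(f)\subseteq\overline{I(f)}$ for every {\tef}. Since $J(f)$ is closed, the second inclusion is equivalent to $I(f)\subseteq J(f)$, i.e.\ to the assertion that no escaping point lies in the Fatou set. So I would argue by contradiction, assuming there is a point $z_0\in I(f)\cap F(f)$, and obtain a contradiction from the expansion estimate of Lemma~\ref{lemm:expansion}.

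The first step is to upgrade a single escaping orbit to locally uniform escape on a disc $B:=B(z_0,\delta)\subseteq F(f)$. The family $(f^n|_B)$ is normal, so it suffices to show that every locally uniform (spherical) subsequential limit $g$ of $(f^n|_B)$ is $\equiv\infty$: such a $g$ is either a finite holomorphic function, or meromorphic, or $\equiv\infty$; the first is excluded since $g(z_0)=\lim f^{n_k}(z_0)=\infty$, and in the second case $z_0$ would be a pole of $g$, so applying Hurwitz to $1/f^{n_k}\to 1/g$ near $z_0$ would produce zeros of $1/f^{n_k}$, i.e.\ poles of the \emph{entire} functions $f^{n_k}$ --- impossible. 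Hence $f^n\to\infty$ locally uniformly on $B$, so in particular $B\subseteq I(f)$. Now fix a slightly smaller disc $B'$ and, using this uniform escape together with the boundedness of $D$, an integer $N$ with $f^n(B')\subseteq\mathcal{V}$ for all $n\ge N$. Since $B'$ is simply connected and $\mathcal{V}\subseteq\mathbb{C}\setminus\{0\}$, each $f^n|_{B'}$ ($n\ge N$) lifts through $\exp$ to a map $\tilde f_n:B'\to\mathcal{T}$ whose image, being connected, lies in a single tract $\hat T_n$ of $F$; and using $\exp\circ F=f\circ\exp$ one checks that the lifts may be chosen so that $\tilde f_{n+1}=F\circ\tilde f_n$ for all $n\ge N$, whence $\operatorname{Re}\tilde f_n(z)=\log|f^n(z)|$ on $B'$.

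Now fix $\zeta\in B'$ with $(f^N)'(\zeta)\ne 0$ (possible since $f^N$ is non-constant); since $\exp\circ\tilde f_N=f^N$ this gives $\tilde f_N'(\zeta)=(f^N)'(\zeta)/f^N(\zeta)\ne 0$. As $F$ is conformal on each tract, $F'$ is zero-free there, so differentiating $\tilde f_{n+1}=F\circ\tilde f_n$ yields, inductively, $\tilde f_n'(\zeta)\ne 0$ and $|\tilde f_{n+1}'(\zeta)|=|F'(\tilde f_n(\zeta))|\,|\tilde f_n'(\zeta)|$ for $n\ge N$. Fixing $R$ with $\{\operatorname{Re} w>R\}\subseteq H$ (property~\ref{log1}), we have $\operatorname{Re} F(\tilde f_n(\zeta))=\operatorname{Re}\tilde f_{n+1}(\zeta)=\log|f^{n+1}(\zeta)|>R$ for all large $n$ (as $f^{n+1}(\zeta)\to\infty$), so Lemma~\ref{lemm:expansion} gives $|F'(\tilde f_n(\zeta))|\ge\frac1{4\pi}(\log|f^{n+1}(\zeta)|-R)\to\infty$; hence $|\tilde f_n'(\zeta)|\to\infty$. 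On the other hand $\tilde f_n:B'\to\hat T_n$ is analytic between simply connected proper subdomains of $\mathbb{C}$, so the Schwarz--Pick lemma (infinitesimal form) gives $\rho_{\hat T_n}(\tilde f_n(\zeta))\,|\tilde f_n'(\zeta)|\le\rho_{B'}(\zeta)$; and since $\hat T_n$ is disjoint from its $2\pi i$ translate it cannot contain a disc of radius $\pi$, so $\operatorname{dist}(\tilde f_n(\zeta),\partial\hat T_n)\le\pi$ and the left inequality of \eqref{eq.hypest} gives $\rho_{\hat T_n}(\tilde f_n(\zeta))\ge\frac1{2\pi}$. Thus $|\tilde f_n'(\zeta)|\le 2\pi\,\rho_{B'}(\zeta)$ for all $n\ge N$ --- a bound independent of $n$, contradicting $|\tilde f_n'(\zeta)|\to\infty$. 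Therefore $I(f)\cap F(f)=\emptyset$, and the theorem follows.

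I expect the principal obstacle to be the assembly rather than any single estimate: one must secure locally uniform escape on a neighbourhood (so that the logarithmic lift is defined and lands in $\mathcal{T}$), then propagate the lifts consistently so that $\tilde f_{n+1}=F\circ\tilde f_n$, and finally recognise that ``$F$ expands on the tracts'' (Lemma~\ref{lemm:expansion}, forcing $|\tilde f_n'(\zeta)|\to\infty$) is in direct conflict with ``the Schwarz lemma contracts the Fatou disc into $\hat T_n$'' (forcing $|\tilde f_n'(\zeta)|$ to be bounded). The remaining points --- that $B'$ and the tracts are genuine hyperbolic domains, the zero-freeness of $F'$ on tracts, and that a tract cannot contain a disc of radius $\pi$ --- are routine once this dichotomy is in place.
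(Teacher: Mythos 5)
Your proof is correct and follows essentially the same line as the paper: assume $z_0\in I(f)\cap F(f)$ for contradiction, pass to the logarithmic plane, use Lemma~\ref{lemm:expansion} to force a derivative to blow up, and contradict the fact that tracts are too ``thin'' to absorb this growth. The only differences are cosmetic: the paper runs the argument at a finite scale (tracking the radius $\rho_n$ of a disc inside $F^n(C)$ and growing it via the Koebe quarter theorem), whereas you run it at the infinitesimal scale (tracking $|\tilde f_n'(\zeta)|$ and bounding it above via Schwarz--Pick together with the hyperbolic-density estimate $\rho_{\hat T_n}\ge 1/2\pi$), and you spell out the normal-families/Hurwitz argument for locally uniform escape that the paper takes as read.
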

\begin{proof}
We are required to prove that $F(f) \cap I(f) = \emptyset$. Suppose, by way of contradiction, that $z \in F(f) \cap I(f)$. Since the Fatou set is open, we can choose $r>0$ such that all iterates tend uniformly to infinity on the disc $\Delta := B(z, r)$. Replacing $z$ by a point $f^k(z)$, for some $k \geq 0$, if necessary, we can assume that $f^n(\Delta) \subset W$, for $n\geq 0$. (Recall that $W = \C \setminus \overline{D}$, where $D$ is a Jordan domain that contains the singular values of $f$.)

Let $C$ be a component of $\exp^{-1}(\Delta)$. Note that $\exp F^n(C) = f^n(\Delta)$, for $n \geq 0$, and so the real part of the iterates of $F$ tend uniformly to positive infinity on $C$. Moreover, $F^n(C) \subset \mathcal{T}$, for $n\geq 0$.

Choose any point $w \in C$, and let $\rho_n$ denote the radius of the largest disc, centre $F^n(w)$, contained in $\overline{F^n(C)}$. Since $F$ is univalent, it follows by the Koebe quarter theorem that $\rho_{n+1} \geq \frac{1}{4}\rho_n|F'(F^n(w))|$. Now, it follows, by \eqref{eq.expansion}, that $$|F'(F^n(w))|\rightarrow\infty \text{ as } n \rightarrow \infty.$$ Hence $F^n(C)$ contains arbitrarily large discs. However, $\mathcal{T}$ cannot contain a disc of radius greater than $\pi$. This contradiction completes the proof.
\end{proof}
\subsection{Normalized transforms}
Equation \eqref{eq.expansion} says, roughly, that we can make $|F'|$ arbitrarily large by making $\operatorname{Re} F$ large. This was easy to use in the proof of Theorem~\ref{noescaping}, because of the nature of points in the escaping set. However, it is often convenient to be able to assume that $|F'|$ is bounded away from one.

The established convention is as follows. If $F\in\Blog$ is such that $H = \{ z : \operatorname{Re} z > 0 \}$, and $|F'(z)| \geq 2$, for $z \in \mathcal{T}$, then we say that $F$ is \emph{normalized}. We denote the set of normalized functions by $\Blogn$.

Although we do not use them directly in this paper, many results in the literature apply only to functions in the class $\Blogn$. However, it is often no significant loss of generality to assume that a function in the class $\Blog$ is normalized. For, if $F$ is not normalised, then we can substitute with a normalized function as follows. First we choose $R>0$ sufficiently large that both $\tilde{H} := \{ z : \operatorname{Re} z > R \} \subset H$ and also $|F'(z)| \geq 2$, for $F(z) \in \tilde{H}$. We then consider instead the function defined by $\tilde{F}(z) := F(z + R) - R$, which maps $F^{-1}(\tilde{H}) - R$ to $\{ z : \operatorname{Re} z > 0 \}$. This is a normalised function that is conjugate to $F$, and so has the same dynamics. 
\subsection{The Hausdorff dimension of the Julia set}
Finally in this section, we discuss a proof which uses some of these ideas. Baker \cite{MR0402044} proved that the Fatou set of a {\tef} has no unbounded, multiply connected components. This implies that the Julia set of a {\tef} must contain a continuum, and so has Hausdorff dimension not less than one. In 1996, Stallard \cite{MR1357062} proved the important result that the Julia set of a function in the class $\B$ necessarily has dimension greater than one. We will outline the idea behind a more recent \cite{MR2480096} proof of this fact; we stress that the authors of  \cite{MR2480096}, in fact, proved a stronger result. Our goal here is primarily to illustrate how working with the logarithmic transform can lead to results in the original dynamical plane. 
\begin{theorem}
\label{theo:dimension}
If $f \in \B$, then $\dim_H J(f) > 1$.
\end{theorem}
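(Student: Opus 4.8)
The plan is to abandon $f$ itself and argue entirely with a logarithmic transform $F : \mathcal{T} \to H$ of $f$. Inside a single tract I would construct a compact Cantor set $\Lambda$ which is invariant under a suitable inverse iterated function system, show that $\Lambda$ has Hausdorff dimension strictly greater than $1$, and then transport this estimate back to $J(f)$ via $\exp$. The dimension gain over the trivial bound ``$\dim_H J(f) \geq 1$'' of Baker will come from the geometry of tracts: a tract contains no disc of radius greater than $\pi$, so pulling back a round region produces a ``thin'' preimage, and the resulting abundance of inverse branches at comparable scales is exactly what forces the dimension above $1$.

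First I would normalize as in Section~\ref{s.transform}: fix $R$ with $\{\operatorname{Re} w > R\} \subset H$ and $|F'(w)| \geq 2$ wherever $w \in \mathcal{T}$ and $\operatorname{Re} F(w) > R$, and recall from Lemma~\ref{lemm:expansion} that $|F'|$ can be made as large as we wish on the part of any tract lying far enough to the right. Fix one tract $T$ of $F$. The inverse branches I use are $\psi_k := (F|_{T+2\pi i k})^{-1}$, which by $2\pi i$-periodicity satisfy $\psi_k(w) = \psi_0(w) + 2\pi i k$; their images are pairwise disjoint, and each $\psi_k$ has the same derivative modulus. I would then locate a box $Q$, spanning $M$ period-strips vertically and positioned very far to the right, adapted to $T$ so that a collection of $N \asymp M$ of these branches maps $Q$ into itself. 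The limit set $\Lambda := \bigcap_n \bigcup (\psi_{k_1} \circ \cdots \circ \psi_{k_n})(Q)$ is a Cantor set of points whose entire $F$-orbit remains in $\mathcal{T}$ with real part tending to $+\infty$; such points are escaping, so $\exp(\Lambda) \subset I(f) \subset J(f)$ by Theorem~\ref{noescaping}.

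The core is the estimate $\dim_H \Lambda > 1$. By the Koebe distortion theorem the derivatives of all the branch compositions, at any point of $Q$, are comparable to powers of a single contraction ratio $\delta < 1$, with multiplicative constants independent of the length of the composition (bounded distortion). The point is to choose $Q$ so that, simultaneously, the $N$ admissible images $\psi_k(Q)$ are disjoint and contained in $Q$ while $N\delta > 1$: the thin-tract property gives the ``disjoint and contained'' part with a lot of vertical room ($\sim M$ translates fit), and the expansion lemma lets us make $M$ exceed the essentially constant value of $|F'|$ over $Q$, hence $N \asymp M > 1/\delta$. With $N\delta > 1$ and bounded distortion, the classical mass-distribution principle (spread mass equally among the $N$ branches at each level), or equivalently Bowen's formula for the conformal IFS $\{\psi_k\}$ — the pressure $P(1) = \limsup_n \frac1n \log \sum |(\psi_{k_1}\circ\cdots\circ\psi_{k_n})'|$ is positive — yields $\dim_H \Lambda > 1$. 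Finally $\exp$ is bi-Lipschitz on any set of bounded diameter and bounded real part, and $\Lambda$ can be taken inside such a set (a tract is thin about its boundary), so $\dim_H J(f) \geq \dim_H \exp(\Lambda) = \dim_H \Lambda > 1$.

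The main obstacle is the geometric bookkeeping in the middle step: positioning the box $Q$ far to the right so that it is genuinely adapted to the shape of $T$ — i.e. so that $T \cap Q$ contains the (small) images $\psi_k(Q)$ — while keeping the count $N$ of admissible translates large enough that $N\delta > 1$. This is exactly where the ``no disc of radius $>\pi$ in a tract'' property and Lemma~\ref{lemm:expansion} must be combined with care. Everything downstream of that — the Koebe bounded-distortion estimates, the mass-distribution lower bound for $\dim_H \Lambda$, and the transfer through $\exp$ — is routine.
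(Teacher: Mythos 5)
Your skeleton is the right one — pass to a logarithmic transform $F$, build a conformal IFS with many comparable branches in a box $Q$ spanning many period strips, apply a mass-distribution / Bowen argument to the limit set, and transfer by $\exp$ — but the one-iterate IFS you set up cannot self-map $Q$, and this is precisely why the paper uses $F^{-2}$ rather than $F$. For any $F \in \Blog$, the fact that $F : T \to H$ is a conformal isomorphism from a strip-like domain of width at most $2\pi$ onto a domain containing a right half-plane forces $\operatorname{Re} F$ to grow at least exponentially along $T$: the hyperbolic density of $T$ is bounded below by $1/(2\pi)$ while that of $H$ decays like $1/\operatorname{Re} z$, so conformal invariance of hyperbolic distance gives $\operatorname{Re} F(z) \gtrsim e^{c\operatorname{Re} z}$. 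Consequently, if $Q$ sits far to the right, then $\psi_0(Q) = F_T^{-1}(Q)$ lies at real parts on the order of $\log\operatorname{Re} Q$, far to the \emph{left} of $Q$; no vertical translate $\psi_0(Q) + 2\pi i k$ can land back in $Q$. And you cannot evade this by repositioning $Q$: your dimension inequality needs $|F'| \gg 1$ over $Q$, which by Lemma~\ref{lemm:expansion} forces $Q$ far to the right. The paper's fix is to compose two pullbacks: the first sends $Q$ to a small set $V \subset H$ (far left of $Q$) and, by periodicity, to all translates $V + 2\pi i k$; the second sends those into the long segment of $T$ that traverses $Q$, because $T$ meets $Q$ in a long arc and preimages of the $V + 2\pi i k$ are spaced along $T$. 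The branches $\Phi_n$ of $F^{-2}$ that land back in $Q$ are what form the IFS. Once you make that change, your Koebe / bounded-distortion / Bowen estimate goes through unchanged.

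A second, smaller error: with a self-mapping IFS, points of the limit set $\Lambda$ have their entire $F$-orbit trapped inside $Q$, so their orbits are \emph{bounded}, not escaping. Your deduction $\exp(\Lambda) \subset I(f)$ via Theorem~\ref{noescaping} is therefore unavailable. The paper instead notes that $|(F^n)'| \to \infty$ on the limit set, hence $|(f^n)'| \to \infty$ on its image under $\exp$; a point with bounded orbit and unbounded derivatives cannot lie in the Fatou set, and this is what places $\exp(\Lambda)$ in $J(f)$.
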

\begin{proof}
Let $f \in \B$, let $F$ be a logarithmic transform of $f$, and choose a tract $T$ of $F$. Choose a large number $R>0$, and let $Q$ be a square of side $R$ centred at the point $R$ on the real line. We choose $R$ large enough that $Q$ intersects with many of the $2\pi i$ periodic copies of $T$.

It can be shown to follow from \eqref{eq.expansion} that if $R$ is chosen sufficiently large, then $T$ contains a preimage $V$ of $Q$, which lies in $H$; essentially this follows from the earlier discussions regarding the hyperbolic metric. Since $F$ is $2\pi i$ periodic, this gives rise to infinitely many preimages of $Q$, each of the form $V + 2\pi i k$, for $k \in \Z$, all lying in $H$.

Now we consider the preimages of these sets, lying in $T$. There are infinitely many of these, and considerations of hyperbolic geometry show that they are spaced along $T$. Hence, if $R$ is chosen large enough, $Q$ contains many components of $F^{-2}(Q)$, lying in $T$ and its $2 \pi i$ translates; see Figure~\ref{f2}. 

\begin{figure}
	\includegraphics[width=14cm,height=8cm]{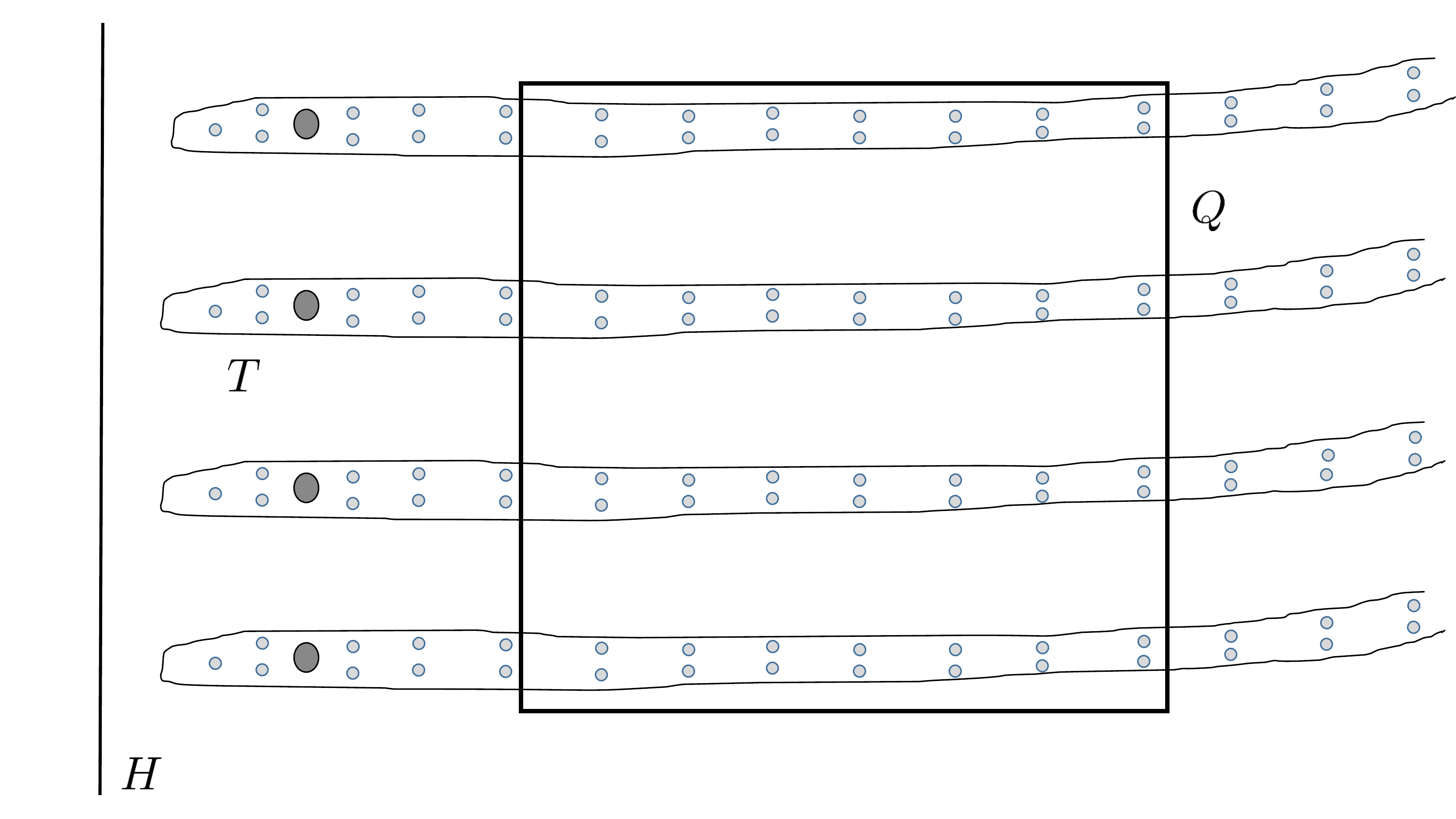}
  \caption{An illustration of the sets in the proof of Theorem~\ref{theo:dimension}. Components of $F^{-1}(Q)$ are shown in dark gray, and components of $F^{-2}(Q)$ in light gray.}\label{f2}
\end{figure}

Let $\Phi_1, \Phi_2, \ldots, \Phi_N$ denote the inverse branches of $F^{-2}$ that map $Q$ into $Q$. These maps define a \emph{conformal iterated function system}, and such systems are well-understood. In particular we can set $Q_0 := Q$, inductively define
\[
Q_{k} := \bigcup_{n=1}^N \Phi_n(Q_{k-1}), \qfor k \in \N,
\]
and then let
\[
X := \bigcap_{k \geq 0} Q_k.
\]
This is known as the \emph{limit set}, and consists of those points of $Q$ which remain in $Q$ under iteration of $F^2$. The integer $N$ and the derivatives $|(\Phi_n)'|$ can be estimated, and it can be shown from these estimates, using standard techniques, that the limit set $X$ has Hausdorff dimension greater than one whenever $R$ is sufficiently large. We omit the detail.

Finally we consider the set $X' = \exp X$. It is straightforward to see that the Hausdorff dimension of $X'$ is greater than one. Moreover $|(F^n)'|$ tends to infinity in $X$ and so $|(f^n)'|$ tends to infinity in $X'$. Since points in $X'$ do not iterate to infinity, we can deduce that $X' \subset J(f)$. This completes the proof.
\end{proof}
\subsection{The Julia set of the transform}
Notice that in this proof we considered points the orbit of which is contained in $\overline{\mathcal{T}}$. We then showed that the exponential of these points lies in the Julia set of $f$. This suggests the following definitions. First we define the Julia set of $F$ by
\[
J(F) := \{ z \in \C : F^n(z) \in \overline{\mathcal{T}} \text{ for } n \geq 0 \}.
\]
Notice that the Julia set is precisely those points on which all iterates are defined, and so there is no value, from a dynamical perspective, in defining a ``Fatou set'' as the complement of $J(F)$.

In view of Theorem~\ref{noescaping}, it also makes sense to define the escaping set of $F$ by
\[
I(F) := \{ z \in J(F) : \operatorname{Re} F^n(z) \rightarrow +\infty \text{ as } n \rightarrow \infty \}.
\]

The orbit of a point in $J(F)$ is characterised by the closures of the tracts it visits. Accordingly, we let $\mathcal{A}$ denote the set of tracts of $F$, and then for each $z \in J(F)$ there is a unique $\underline{s} = s_0 s_1 \ldots \in \mathcal{A}^{\No}$ such that $F^n(z) \in \overline{s_n}$, for $n \in \No$. In this case we call $\underline{s}$ the \emph{external address} of $z$. Note that if $\s$ is the external address of $z$, then $F(z)$ has external address $\sigma(\s)$, where $\sigma$ is the \emph{Bernoulli shift map} defined by $$\sigma(s_0 s_1 \ldots) := s_1 s_2 \ldots.$$

One might ask if it is the case that $J(f) = \exp J(F)$. In general this is not true; for example, the orbit of points in $J(f)$ may include points of small modulus that lie in $D$. However, for a normalised function it is the case that $\exp J(F) \subset J(f)$; the proof of this follows by a technique very similar to that used in the proof of Theorem~\ref{lemm:expansion}.

Finally, for each external address $\underline{s}$ we let
\[
J_{\underline{s}}(F) := \{ z \in J(F) : z \text{ has external address } \underline{s} \}.
\] 
It is possible for $J_{\underline{s}}$ to be empty. If $J_{\underline{s}}(F) \ne \emptyset$, then we say that $\underline{s}$ is \emph{admissible}. 
%
%
%

Suppose that $z_0 \in J(F)$. As observed earlier, $z_0$ has an external address say $\underline{s} = s_0 s_1 \ldots \in \mathcal{A}^{\No}$ such that $F^n(z_0) \in \overline{s_n}$, for $n \in \No$. Recall that we set $F(\infty) = \infty$. Set $S_0 := \overline{s_0} \cup \{\infty\}$, and then inductively
\begin{equation}
\label{eq:pullback}
S_k := \{ z \in S_{k-1} : F^k(z) \in \overline{s_k} \cup \{\infty\} \}.
\end{equation}

We can deduce that 
\begin{equation}
\label{eq:jcdef}
\hat{J}_{\underline{s}}(F) := J_{\underline{s}}(F) \cup \{\infty\} = \bigcap_{k\in\No} S_k,
\end{equation}
is a non-empty, closed, connected subset of the Riemann sphere that contains the points $z_0$ and $\infty$. In other words, $\hat{J}_{\underline{s}}(F)$ is a \emph{continuum};  we call it a \emph{Julia continuum} of $F$. Note that if $$\underline{s} = s_0 s_1 \ldots \ne \underline{s'} = s_0' s_1' \ldots,$$ then there is a $k \in \No$ such that $s_k \ne s'_k$. Hence $F^k(J_{\underline{s}}(F))$ and $F^k(J_{\underline{s'}}(F))$ lie in different tracts, and so $J_{\underline{s}}(F)$ and $J_{\underline{s'}}(F)$ are disjoint. Note that we cannot assume that $J_{\underline{s}}(F)$ is connected, although in the next section we will discuss a class of functions for which this is the case. 

\section{Functions of disjoint type}
\label{s.disjoint}
In this section we discuss a class of functions with the property that $J(f) = \exp J(F)$. This enables us to give particularly strong characterisations of the Fatou and Julia sets of these functions.
\subsection{Definition and examples}
We begin with a definition. Recall that $D$ is a Jordan domain that contains the singular values of $f$, as well as the set $\{0, f(0)\}$.
\begin{definition}
A function $f \in \B$ is of \emph{disjoint type} if $D$ can be chosen so that $f(\overline{D}) \subset D$.
\end{definition}
If $f$ is of disjoint type, then we assume that $D$ has been chosen so that $f(\overline{D}) \subset D$, in which case we also have that $\overline{\mathcal{V}} \subset W$; the fact that, therefore, $\partial \mathcal{V} \cap \partial W = \emptyset$ explains the choice of term ``disjoint type''. Note also that if $F$ is the transform of $f$, then $\overline{\mathcal{T}} \subset H$. Whenever this property holds for some $F \in \Blog$ (not necessarily the logarithmic transform of a {\tef}), then we say that $F$ is of disjoint type.

Examples of class $\B$ functions of disjoint type are those in the much-studied family 
\[
f_\lambda(z) := \lambda e^z, \quad\text{ where } 0 < \lambda < 1/e.
\]
It is easily seen that $S(f_\lambda) \cup \{0, f_\lambda(0)\} \subset \D$ and $f_\lambda(\overline{\D}) \subset \D$. It is also easy to show that $f_\lambda$ has an attracting fixed point $p_\lambda > 0$, and a repelling fixed point $q_\lambda > p_\lambda$. The Fatou set of $f_\lambda$ is the immediate attracting basin of $p_\lambda$. It can then be shown that the Julia set of $f_\lambda$ consists of an uncountable set of unbounded curves; see \cite[p.50]{MR758892}. We will see that these properties are characteristic of a function of disjoint type.

Maps of disjoint type are important for three reasons. Firstly, the properties of these maps allow us to build a particularly clear understanding of their dynamics. Secondly, any map $f \in \B$ has a disjoint type map in its parameter space (by which we mean the space of maps of the form $\lambda f$, for $\lambda \in \C\setminus\{0\})$. This is expressed in the following; see \cite[p.261]{MR2570071}.
\begin{lemma}
\label{lemm:smallerfun}
Suppose that $f\in\B$. Then there exists $\lambda > 0$, such that the map $$g_\lambda(z) := \lambda f(z),$$ is of disjoint type.
\end{lemma}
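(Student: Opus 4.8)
The plan is to exploit the fact that $S(f)$ is bounded, so we may fix $R_0>0$ with $S(f)\cup\{0,f(0)\}\subset B(0,R_0)$, and then track how scaling $f$ by a small positive $\lambda$ moves the singular set. The key computation is \eqref{compos} applied to $g_\lambda=m_\lambda\circ f$, where $m_\lambda(z)=\lambda z$; since $m_\lambda$ is entire and injective it has no critical or asymptotic values, so $CV(g_\lambda)=\lambda\, CV(f)$ and $AV(g_\lambda)=\lambda\, AV(f)$, whence $S(g_\lambda)=\lambda\, S(f)\subset B(0,\lambda R_0)$. Thus by shrinking $\lambda$ we can force $S(g_\lambda)$ into an arbitrarily small disc about the origin.

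Next I would choose the Jordan domain $D$ for $g_\lambda$ to be a disc $B(0,r)$ and verify the disjoint-type condition $g_\lambda(\overline{D})\subset D$, i.e. $\lambda f(\overline{B(0,r)})\subset B(0,r)$. Writing $M(r):=\max_{|z|\le r}|f(z)|$, this is exactly the inequality $\lambda\, M(r)<r$. Since we also need $S(g_\lambda)\cup\{0,g_\lambda(0)\}\subset D$, and $g_\lambda(0)=\lambda f(0)$, these containments are guaranteed once $r$ is at least, say, $2\lambda R_0$ (using $|f(0)|\le R_0$) and $\lambda M(r)<r$. So the task reduces to finding a pair $(\lambda,r)$ with $\lambda>0$, $2\lambda R_0\le r$, and $\lambda M(r)<r$.

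The point where a little care is needed — and what I expect to be the only real obstacle — is that $M(r)$ grows superpolynomially for a transcendental $f$, so one cannot simply send $r\to\infty$; instead one fixes a convenient $r$ first. I would fix any $r>0$ (for instance $r=1$), note that $M(r)$ is then a finite constant, and choose $\lambda>0$ small enough that simultaneously $\lambda M(r)<r$ and $2\lambda R_0\le r$; both are satisfied for all sufficiently small $\lambda$. With this choice, $D:=B(0,r)$ contains $S(g_\lambda)$ and $\{0,g_\lambda(0)\}$, and $g_\lambda(\overline{D})=\lambda f(\overline{B(0,r)})\subset B(0,\lambda M(r))\subset B(0,r)=D$. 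Hence $g_\lambda$ is of disjoint type, and since $g_\lambda\in\B$ (being a nonzero scalar multiple of a class $\B$ function, again by \eqref{compos}), the lemma follows.

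One should also record the trivial remark that $g_\lambda$ is genuinely transcendental and entire, which is immediate since multiplication by $\lambda\neq 0$ preserves both properties; and that the Jordan domain in the definition of disjoint type may be taken to be a round disc without loss of generality, which is what makes the estimate $g_\lambda(\overline D)\subset D$ so clean. No normalization of the logarithmic transform is needed for this statement — only the scaling behaviour of the singular set and the maximum modulus.
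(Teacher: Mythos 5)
Your proof is correct and follows essentially the same route as the paper's: take $D$ to be a fixed disc about the origin, observe that $S(g_\lambda)=\lambda S(f)$, and shrink $\lambda$ until both $S(g_\lambda)\subset D$ and $g_\lambda(\overline{D})\subset D$ hold. You simply spell out the details that the paper leaves implicit, such as the derivation of $S(g_\lambda)=\lambda S(f)$ from \eqref{compos} and the containment of $\{0,g_\lambda(0)\}$, the latter of which is in fact automatic once $g_\lambda(\overline D)\subset D$.
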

\begin{proof}
First choose a value of $\lambda$ sufficiently small to ensure that $g_\lambda(\overline{\D}) \subset \D$. Since we have that $S(g_\lambda) = \lambda S(f)$, we can decrease $\lambda$, if necessary, to obtain that $S(g_\lambda) \subset \D$. Then $g_\lambda$ is of disjoint type.
\end{proof}
Thirdly, it is often the case that for a given map $f \in \B$, we can transfer properties of the dynamics of the disjoint type map $g_\lambda(z)$ from Lemma~\ref{lemm:smallerfun} back to the dynamics of $f$; see, for example, \cite[Section 5]{MR2912445} and \cite[Section 5]{MR2570071}, and the discussion of hyperbolic maps in Section~\ref{s.hyperbolic} below.
\subsection{The Fatou set of a disjoint type map} 
The Fatou set of a disjoint type function is described in the following; see  \cite[Proposition 2.8]{MR2912445}. In fact (b) below is often used as an alternative definition of a disjoint type map.
\begin{theorem}
\label{theo:disjointFatou}
Suppose that $f \in \B$. Then the following are equivalent.
\begin{enumerate}[(a)]
\item The function $f$ is of disjoint type.
\item The Fatou set of $f$ is connected, and $P(f)$ is a compact subset of $F(f)$.
\item The function $f$ has a unique attracting fixed point and $P(f)$ is a compact subset of its immediate basin of attraction.
\end{enumerate}
\end{theorem}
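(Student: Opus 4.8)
The plan is to establish the equivalence as a cycle of implications, $(a)\Rightarrow(b)\Rightarrow(c)\Rightarrow(a)$. The first two steps are essentially soft: $(a)\Rightarrow(b)$ rests on the contraction forced by the trapping condition $f(\overline{D})\subset D$, and $(b)\Rightarrow(c)$ is a routine application of the classification of periodic Fatou components. The substance is in $(c)\Rightarrow(a)$, where one has to build a forward-invariant Jordan domain from scratch, and that is where I expect the real difficulty.

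For $(a)\Rightarrow(b)$: assume $f(\overline{D})\subset D$ with $D$ a bounded Jordan domain containing $S(f)\cup\{0,f(0)\}$. Then $\overline{f(D)}\subset f(\overline{D})\subset D$, so $f(D)$ is compactly contained in $D$; using \eqref{eq.SP} together with compactness gives a uniform bound $\sup_{w\in f(D)}\rho_D(w)/\rho_{f(D)}(w)=\kappa<1$, so $f$ contracts $(D,d_D)$ by the factor $\kappa$, and as $(D,d_D)$ is complete, Banach's fixed point theorem produces a unique fixed point $z_0\in D$, which is attracting and satisfies $f^n\to z_0$ uniformly on $\overline{D}$. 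Hence $D$ lies in the immediate basin of attraction $A^*(z_0)\subset F(f)$. Moreover $S(f)\subset D$ and $f^n(S(f))\subset f(\overline{D})\subset D$ for $n\ge1$, so $P(f)\subset S(f)\cup f(\overline{D})$ is a compact subset of $D\subset F(f)$. For connectedness of $F(f)$, note $\overline{D}\subset f^{-1}(D)$, so the sets $f^{-n}(D)$ increase with $n$, each is again a domain containing $S(f)\cup\{0,f(0)\}$ by iterating \cite[Proposition~2.9]{MR3433280}, and therefore $A(z_0):=\bigcup_n f^{-n}(D)=\{z:f^m(z)\to z_0\}$ is connected; finally $F(f)=A(z_0)$ since $f$ is of disjoint type, so that $J(f)=\exp J(F)$ and one checks $\exp J(F)=\C\setminus\bigcup_n f^{-n}(D)$. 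This is $(b)$.

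For $(b)\Rightarrow(c)$: since $f$ is transcendental, $S(f)\ne\emptyset$ (otherwise $f$ would be a covering map of $\C$, hence affine), so $F(f)\ne\emptyset$; being connected and open it is a single Fatou component $U$, completely invariant by Theorem~\ref{FJ}, hence periodic of period $1$. By the classification, $U$ is an immediate attracting or parabolic basin, a Siegel disc, or a Baker domain. It is not a Baker domain, by Theorem~\ref{noescaping}; not a Siegel disc, because on a period-$1$ Siegel disc $f$ is injective, which is incompatible with complete invariance as $f$ is transcendental; and not a parabolic basin, since such a $U$ would attract a singular value $s\in S(f)\subset P(f)$, forcing $\lim_n f^n(s)$ to be a parabolic point of $\partial U\subset J(f)$, contrary to $P(f)$ being forward-invariant and compactly contained in $F(f)$. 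Hence $U=A^*(z_0)$ is the immediate basin of an attracting fixed point $z_0$; any attracting fixed point lies in $F(f)=U$, where all orbits converge to $z_0$, so $z_0$ is the unique one, and $P(f)$ is a compact subset of $A^*(z_0)$. This is $(c)$.

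For $(c)\Rightarrow(a)$: I must produce a Jordan domain $D$ with $S(f)\subset D$ (and, after enlarging inside $A^*(z_0)$, also $\{0,f(0)\}\subset D$) and $f(\overline{D})\subset D$. The idea is to invoke the classical local theory of the attracting basin: Koenigs' linearisation at $z_0$ (B\"ottcher's, if $f'(z_0)=0$) extends to a holomorphic $\phi\colon A^*(z_0)\to\C$ with $\phi\circ f=f'(z_0)\,\phi$, and the component $D_0$ of $z_0$ in a sub-level set $\{|\phi|<R\}$ is a domain with $f(\overline{D_0})\subset D_0$ (closures taken in $A^*(z_0)$); as $R\to\infty$ these exhaust $A^*(z_0)$. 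Since $S(f)\subset P(f)$ is a \emph{compact} subset of $A^*(z_0)$ — this compactness is exactly what the hypothesis of $(c)$ provides — it lies in one such $D_0$; passing to a Jordan domain $D$ with $f(\overline{D_0})\cup S(f)\cup\{0,f(0)\}\subset D\Subset D_0$ retains $f(\overline{D})\subset f(\overline{D_0})\subset D$, so $f$ is of disjoint type. The obstacle I anticipate lies entirely here: one must verify that the sub-level components $D_0$ are compactly contained in $A^*(z_0)$, so that the functional equation for $\phi$ may be used freely on them, and that they can be replaced by genuine Jordan domains while keeping the trapping property; by contrast the hyperbolic-metric estimates in $(a)\Rightarrow(b)$ and the Fatou-component bookkeeping in $(b)\Rightarrow(c)$ are routine.
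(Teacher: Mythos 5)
Your cycle $(a)\Rightarrow(b)\Rightarrow(c)\Rightarrow(a)$ is a sensible structure, and both $(b)\Rightarrow(c)$ and the sketch of $(c)\Rightarrow(a)$ (which the paper omits) are sound in outline; in $(b)\Rightarrow(c)$ your injectivity argument for excluding a Siegel disc is a nice alternative to the paper's appeal to the fact that the boundary of a Siegel disc lies in the postsingular set. But there is a genuine gap in your $(a)\Rightarrow(b)$.

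After showing that $D$ lies in the immediate attracting basin $A^*(z_0)$, you need to establish that $F(f)$ has no components other than those making up the full basin $A(z_0)=\bigcup_n f^{-n}(D)$. You dispose of this with ``finally $F(f)=A(z_0)$ since $f$ is of disjoint type, so that $J(f)=\exp J(F)$ and one checks $\exp J(F)=\C\setminus\bigcup_n f^{-n}(D)$.'' This is circular: the identity $J(f)=\exp J(F)$ for disjoint-type maps is \emph{derived} in the paper as a consequence of Theorem~\ref{theo:disjointFatou} (see the paragraph opening the subsection on the Julia set of a disjoint type function: ``we have shown in Theorem~\ref{theo:disjointFatou} that the Julia set of $f$ is exactly those points whose orbit never lands in $D$. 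In particular, $J(f)=\exp J(F)$''), so it cannot be invoked while proving that theorem. The actual work here is to rule out \emph{all other kinds} of Fatou components. The paper does this explicitly: wandering domains are excluded by combining Theorem~\ref{noescaping} with the result of \cite{MR1234740} that all finite limit functions on a wandering domain lie in $P(f)$; Baker domains are excluded directly by Theorem~\ref{noescaping}; parabolic basins and Siegel discs are excluded because their cycles (resp.\ boundaries) must meet $P(f)$, which is contained in the single component $U\supset D$; and any further attracting cycle would require a singular value, but $S(f)\subset U$. Only after this bookkeeping does $F(f)=\bigcup_n f^{-n}(D)$ follow, and hence connectedness via the iterated application of \cite[Proposition~2.9]{MR3433280}. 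Without this component-by-component exclusion, the claim $F(f)=A(z_0)$ is unjustified.

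Two smaller remarks. In $(b)\Rightarrow(c)$ you deduce $F(f)\neq\emptyset$ from $S(f)\neq\emptyset$; it is simpler to note that hypothesis $(b)$ already posits $P(f)$ as a compact subset of $F(f)$, and $P(f)\supset S(f)\neq\emptyset$. In $(c)\Rightarrow(a)$ your sub-level-set approach via a globally extended Koenigs map is workable, but the verification that $\{|\phi|<R\}$ has its component through $z_0$ compactly contained in $A^*(z_0)$ is delicate (the extended $\phi$ is not proper); a cleaner construction exhausts $A^*(z_0)$ by the nested domains $D_n\subset D_{n+1}$ where $D_n$ is the component of $f^{-n}(\Delta)$ through $z_0$ for a small trapping disc $\Delta$, and then chooses $n$ large enough so that the compact set $P(f)\cup\{0,f(0)\}$ lies in $D_n$, passing finally to a Jordan subdomain.
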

\begin{proof}
We show first that (a) implies (b). It follows by Montel's theorem that $D$ is contained in a Fatou component, $U$ say, of $f$. It is easy to see that $U$ must be an immediate attracting basin. Note also that $S(f) \subset U$, and so $P(f) \subset U$. 

It is known \cite{MR1234740} that all finite limit functions in a wandering domain of a {\tef} lie in the postsingular set. We can deduce, using Theorem~\ref{noescaping}, that $f$ has no wandering domains. As noted earlier, the classification of periodic Fatou components \cite[Theorem 6]{MR1216719} gives that all periodic Fatou components of $f$ are immediate attracting basins, parabolic basin, Siegel discs, or Baker domains. Since Baker domains lie in $I(f)$, it follows again by Theorem~\ref{noescaping} that $f$ has no Baker domains.
 
It is known \cite[Theorem 7]{MR1216719} that any cycle of parabolic domains meets the singular set, and the boundary of a Siegel disc lies in the postsingular set. Since $P(f) \subset U$, we can deduce that $f$ has no parabolic domains or Siegel discs. In particular, $U$ is an attracting basin containing $S(f)$, and so $P(f)$ is a compact subset of $U$.

Since any cycle of immediate attracting basins contains at least one singular value \cite[Theorem 7]{MR1216719}, $U$ is the unique immediate attracting basin of $f$. Since the attracting fixed point of $f$ lies in $D$, we deduce that 
\[
F(f)= \bigcup_{n\in\No} f^{-n} (D).
\] 

Now
\[
S(f) \subset D \subset f^{-1}(D) \subset f^{-2}(D) \ldots.
\]
We can deduce from this, by \cite[Proposition 2.9]{MR3433280}, that each set $f^{-n}(D)$ is connected. Thus, $F(f)$ is an ascending union of connected sets, and so is connected.

Next we show that (b) implies (c). Since $F(f)$ is connected, there is a completely invariant Fatou component $U$ such that $U = F(f)$. Since $P(f)$ is a compact subset of $U$, it follows by an argument similar to the above that $U$ must be the immediate attracting basin of an attracting fixed point of $f$.

The fact that (c) implies (a) follows by properties of attracting domains, and is omitted.
\end{proof}
\subsection{Uniform hyperbolic expansion}
Recall from Lemma~\ref{lemm:expansion} that a map $F \in \Blog$ is expanding whenever $\operatorname{Re} F$ is sufficiently large. An important property of disjoint type maps is that they are \emph{uniformly} expanding on $\mathcal{T}$. This is expressed precisely in the following proposition \cite[Lemma 2.1]{MR2753600}. Here, if $W$ is a simply connected proper subdomain of $\C$, and $F$ is analytic in $W$, then we define the \emph{hyperbolic derivative} of $F$ in $W$ by 
\[
||DF(z)||_W := \frac{\rho_W(F(z))}{\rho_W(z)}|F'(z)|, \qfor z, F(z) \in W.
\]
Roughly speaking, the quantity $||DF(z)||_W$ determines how expanding (or contracting) $F$ is, at $z$, in the hyperbolic metric in $W$. 
\begin{lemma}
\label{lemm:unifexpansion}
Suppose that $F : \mathcal{T} \to H$ is a function in the class $\Blog$ of disjoint type. Then there is a constant $L > 1$ such that 
\[
||DF(z)||_H \geq L, \qfor z \in \mathcal{T}.
\]
Moreover, if $z_1, z_2$ belong to the same tract of $F$, then $d_H(F(z_1),F(z_2)) \geq  L d_H(z_1, z_2)$.
\end{lemma}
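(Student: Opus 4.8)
The plan is to exploit the fact that a disjoint type map satisfies $\overline{\mathcal{T}} \subset H$, so that on each tract $T$ the inclusion $\overline{T} \subset H$ is strict, and then combine this with compactness coming from $2\pi i$-periodicity. First I would observe that the hyperbolic derivative is essentially two competing factors: the density ratio $\rho_H(F(z))/\rho_H(z)$ and the Euclidean derivative $|F'(z)|$. On a single tract $T$, the map $F_T := F|_T : T \to H$ is a conformal isomorphism, so by \eqref{eq.conformal} applied in the domain $T$ with its own hyperbolic metric one has $||DF_T(z)||$ measured from $\rho_T$ to $\rho_H$ equal to $\rho_H(F(z))|F'(z)|/\rho_T(z)$, and this in turn equals $\rho_T(z)^{-1}\rho_T(z) = 1$ only if we used $\rho_T$ on both sides; the point is that $||DF(z)||_H = \rho_H(F(z))|F'(z)|/\rho_H(z)$, and since $T \subsetneq H$, inequality \eqref{eq.SP} gives $\rho_H(z) < \rho_T(z)$, hence
\[
||DF(z)||_H = \frac{\rho_H(F(z))|F'(z)|}{\rho_H(z)} > \frac{\rho_H(F(z))|F'(z)|}{\rho_T(z)} = ||DF_T(z)||_{T \to H} = 1,
\]
where the last equality is because $F_T$ is a conformal bijection from $T$ (with metric $\rho_T$) to $H$ (with metric $\rho_H$). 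So the hyperbolic derivative is pointwise strictly greater than $1$; the work is to make it uniformly bounded away from $1$.

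The key step is to upgrade this strict pointwise inequality to a uniform bound $L > 1$. I would do this by bounding the ratio $\rho_H(z)/\rho_T(z)$ away from $1$, uniformly over $z \in \mathcal{T}$. Using the standard estimate \eqref{eq.hypest} this ratio is comparable to $\operatorname{dist}(z, \partial T)/\operatorname{dist}(z, \partial H)$, and the disjoint type hypothesis $\overline{\mathcal{T}} \subset H$ together with $2\pi i$-periodicity should force this to be bounded above by some $c < 1$. Concretely: by periodicity it suffices to control $z$ in a single period strip; there, for $z$ with bounded real part one uses compactness (the closure of $T$ intersected with the strip, up to some large real part, is a compact subset of $H$, so distances to $\partial H$ are bounded below while distances to $\partial T$ are what they are — I would need to check this yields a uniform ratio bound, handling the finitely many tracts meeting the strip), and for $z$ with large real part one uses that $T$ is disjoint from its $2\pi i$ translates so $\operatorname{dist}(z, \partial T) \le \pi$ while $\operatorname{dist}(z, \partial H) \to \infty$ as $\operatorname{Re} z \to \infty$ (since $H$ contains a right half-plane and is a Jordan domain). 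Combining, $\rho_H(z)/\rho_T(z) \le c < 1$ uniformly, and therefore $||DF(z)||_H = \big(\rho_H(z)/\rho_T(z)\big)^{-1} \cdot ||DF_T(z)||_{T\to H} \ge c^{-1} =: L > 1$ for all $z \in \mathcal{T}$.

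For the "moreover" clause, I would integrate the infinitesimal bound along curves. Fix $z_1, z_2$ in the same tract $T$, and let $\gamma$ be any piecewise smooth curve from $F(z_1)$ to $F(z_2)$ in $H$. Since $\gamma$ lies in $H$ and $F(z_1), F(z_2) \in F(T) = H$, and $F_T$ is a conformal bijection $T \to H$, the preimage $\tilde\gamma := F_T^{-1}(\gamma)$ is a curve from $z_1$ to $z_2$ in $T \subset H$. Then
\[
\ell_H(\gamma) = \int_{\tilde\gamma} \rho_H(F(w)) |F'(w)| \, |dw| = \int_{\tilde\gamma} ||DF(w)||_H \, \rho_H(w) \, |dw| \ge L \int_{\tilde\gamma} \rho_H(w)\,|dw| = L\, \ell_H(\tilde\gamma) \ge L\, d_H(z_1, z_2).
\]
Taking the infimum over $\gamma$ gives $d_H(F(z_1), F(z_2)) \ge L\, d_H(z_1, z_2)$, as required.

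The main obstacle I anticipate is the uniform ratio bound $\rho_H(z)/\rho_T(z) \le c < 1$: the pointwise version is immediate from Schwarz–Pick, but making it uniform requires carefully splitting into the "bounded real part" regime (compactness argument, but one must be sure the estimate is uniform across the — possibly several — tracts meeting a period strip, and genuinely uses $\overline{\mathcal{T}} \subset H$ rather than just $\mathcal{T} \subset H$) and the "large real part" regime (where one leans on disjointness of $2\pi i$-translates to cap $\operatorname{dist}(z,\partial T)$ by $\pi$ and on $H$ containing a right half-plane to blow up $\operatorname{dist}(z,\partial H)$). Gluing the two regimes at a fixed real-part threshold and extracting a single constant $L$ is routine once both pieces are in hand.
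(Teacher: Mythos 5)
Your proposal follows essentially the same strategy as the paper: rewrite $\|DF(z)\|_H = \rho_{\mathcal{T}}(z)/\rho_H(z)$ via the conformal isomorphism $F|_T : T \to H$, obtain the strict pointwise inequality $\|DF(z)\|_H > 1$ from the Schwarz--Pick inequality \eqref{eq.SP}, and then use $2\pi i$-periodicity to reduce the uniform bound to a bounded regime plus a large--real--part regime. The large--real--part estimate (disjointness of $2\pi i$-translates caps $\operatorname{dist}(z,\partial\mathcal{T})\le\pi$, while $\operatorname{dist}(z,\partial H)\to\infty$ because $H$ contains a right half-plane) is exactly the paper's, and your integration-along-curves argument for the final inequality is sound; the paper instead pulls back the geodesic joining $F(z_1)$ to $F(z_2)$, which is the same computation.

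The one genuine gap is the step you yourself flag. In the bounded--real--part regime you propose to bound $\rho_H(z)/\rho_{\mathcal{T}}(z)$ away from $1$ via the distance ratio $\operatorname{dist}(z,\partial\mathcal{T})/\operatorname{dist}(z,\partial H)$ and the standard estimate \eqref{eq.hypest}. But \eqref{eq.hypest} carries a factor-of-$4$ slack, so it only gives $\rho_H(z)/\rho_{\mathcal{T}}(z) \le 4\,\operatorname{dist}(z,\partial\mathcal{T})/\operatorname{dist}(z,\partial H)$, while the disjoint-type hypothesis $\overline{\mathcal{T}}\subset H$ guarantees only that the distance ratio is $<1$, not $<1/4$. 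So the distance comparison does not by itself yield a density ratio $<1$. The fix, and what the paper does, is to argue directly with the hyperbolic densities rather than routing through Euclidean distances: $\|DF(\cdot)\|_H = \rho_{\mathcal{T}}/\rho_H$ is continuous and $>1$ on $\mathcal{T}$, hence has a lower bound $\lambda>1$ on any compact $K\subset\mathcal{T}$; and as $z$ tends to a finite point of $\partial\mathcal{T}$ one has $\rho_{\mathcal{T}}(z)\to\infty$ while $\rho_H(z)$ remains bounded, since the limit point lies in $H$ precisely because $\overline{\mathcal{T}}\subset H$ -- so $\|DF(z)\|_H\to\infty$, not merely $\geq$ something. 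Combining this with your $z\to\infty$ estimate and periodicity via a sequential compactness (or exhaustion) argument in a single period strip gives the uniform $L>1$.
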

\begin{proof}
Since $F$ is a conformal map from each component of $\mathcal{T}$ to $H$, we have that
\[
||DF(z)||_H = \frac{\rho_H(F(z))}{\rho_H(z)}|F'(z)| = \frac{\rho_{\mathcal{T}}(z)}{\rho_H(z)}\frac{\rho_H(F(z))}{\rho_{\mathcal{T}}(z)}|F'(z)| = \frac{\rho_{\mathcal{T}}(z)}{\rho_H(z)}, \qfor z \in \mathcal{T}.
\]
Since $\mathcal{T} \subset H$, we deduce by \eqref{eq.SP} that $||DF(z)||_H > 1$, for $z \in \mathcal{T}$. In particular, if $K \subset \mathcal{T}$ is compact, then there is a $\lambda > 1$ such that $||DF(z)||_H \geq \lambda$, for $z \in K$. We need to show, therefore, that $||DF(z)||_H$ does not tend to one as $z$ tends to the boundary of $\mathcal{T}$.

As $z$ tends to a finite boundary point of $\mathcal{T}$, it is clear that $\rho_{\mathcal{T}}(z)$ tends to infinity. However, since $\overline{\mathcal{T}} \subset H$, $\rho_H(z)$ is bounded above.

Finally we need to consider the case when $z \rightarrow \infty$. In this case the standard estimate \eqref{eq.hypest} gives that $\rho_{\mathcal{T}}(z)$ is bounded below by $1/2\pi$, since no component of $\mathcal{T}$ meets a $2 \pi i$ translate of itself. The standard estimate also gives that $\rho_{H}(z)$ tends to zero. This completes the proof of the first statement.

For the second statement, suppose that $z_1, z_2$ lie in the same tract $T$. Let $\gamma \subset H$ be the hyperbolic geodesic from $F(z_1)$ to $F(z_2)$. It follows from the first statement that $F_T^{-1}(\gamma)$ has hyperbolic length at most $$d_H(F(z_1),F(z_2))/L.$$ (Recall that the notation $F_T^{-1}$ was defined at the end of Subsection~\ref{subs:properties}). The result follows.
\end{proof}
A simple but illustrative consequence of this result is the following, which is a consequence of \cite[Corollary 2.9]{lassearclike}.
\begin{lemma}
\label{lemm:bounded}
Suppose that $F \in \Blog$ is of disjoint type and $\s$ is an admissible address. Then there is at most one point in $J_{\s}(F)$ with a bounded orbit.
\end{lemma}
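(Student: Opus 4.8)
The plan is to squeeze everything out of the uniform hyperbolic expansion supplied by Lemma~\ref{lemm:unifexpansion}. Suppose, for contradiction, that $z_1 \ne z_2$ are two distinct points of $J_{\s}(F)$, both with bounded orbit, and write $\s = s_0 s_1 \ldots$. Since $z_1$ and $z_2$ share the external address $\s$, we have $F^n(z_1), F^n(z_2) \in \overline{s_n}$ for every $n \in \No$. The first point I would pin down is that each $F^n(z_i)$ in fact lies in the \emph{open} tract $s_n$: because $F$ is of disjoint type, $\overline{\mathcal T} \subset H$, so a finite point of $\partial\mathcal T$ is mapped by (the continuous extension of) $F$ to a point of $\partial H$, which is disjoint from $H \supset \overline{\mathcal T}\cap\C$; hence a point whose entire forward orbit stays in $\overline{\mathcal T}$ can never land on a tract boundary. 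Consequently, for every $n$ the points $F^n(z_1)$ and $F^n(z_2)$ lie in one and the same tract $s_n$ of $F$, so the second assertion of Lemma~\ref{lemm:unifexpansion} applies to the pair $(F^n(z_1),F^n(z_2))$.

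Next I would iterate the estimate. Applying $d_H(F(w_1),F(w_2)) \ge L\, d_H(w_1,w_2)$, valid whenever $w_1,w_2$ lie in a common tract, successively to $w_i = F^{n}(z_i)$ and inducting on $n$ yields
\[
d_H\bigl(F^n(z_1), F^n(z_2)\bigr) \ \ge\ L^n\, d_H(z_1, z_2), \qfor n \in \No .
\]
As $L > 1$ and, by assumption, $d_H(z_1,z_2) > 0$, the right-hand side tends to $+\infty$ with $n$.

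The contradiction then comes from bounding the left-hand side from above. Since the orbits of $z_1$ and $z_2$ are bounded, all of the points $F^n(z_i)$ lie in $\overline{\mathcal T} \cap \overline{B(0,M)}$ for a suitable $M>0$; because $\overline{\mathcal T} \subset H$, this set is a \emph{compact} subset $K$ of $H$, and hence has finite hyperbolic diameter, say $\operatorname{diam}_{d_H} K = M' < \infty$. Thus $d_H(F^n(z_1),F^n(z_2)) \le M'$ for all $n$, contradicting the displayed inequality. Therefore $z_1 = z_2$, which is the claim.

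The proof is short because Lemma~\ref{lemm:unifexpansion} is doing the real work; the only steps needing any care are the verification that the orbit avoids the tract boundaries (so that the expansion lemma genuinely applies at every step, since it requires the two points to lie in an actual tract) and the observation that a bounded orbit is confined to a compact subset of $H$, which is exactly what keeps the hyperbolic distances between the two orbits bounded while the expansion drives them apart. I do not expect either to present a serious obstacle, and the write-up should run to just a few lines.
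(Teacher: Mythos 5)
Your proof is correct and uses exactly the argument the paper gives: iterate the uniform expansion from Lemma~\ref{lemm:unifexpansion} to force $d_H(F^n(z_1),F^n(z_2))$ to grow geometrically while bounded orbits keep it bounded. You are in fact slightly more careful than the paper at two small points (verifying that the orbit stays in the open tracts so the expansion lemma applies, and spelling out that a bounded orbit lies in a compact subset of $H$ with finite hyperbolic diameter), both of which the paper treats as implicit.
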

\begin{proof}
By way of contradiction, suppose that $z_1, z_2 \in J_{\s}(F)$ have a bounded orbit. It follows from the latter assumption that there is $K>0$ such that 
\[
d_H(F^k(z_1),F^k(z_2)) \leq K, \qfor k \in \No.
\]
Repeated application of Lemma~\ref{lemm:unifexpansion} shows that
\[
d_H(z_1, z_2) \leq L^{-k} K, \qfor k\in \No.
\]
(Observe that we are using here the principle that a map which is univalent and uniformly expanding has an inverse which is uniformly contracting.) It follows that $z_1 = z_2$. 
\end{proof}
\subsection{The Julia set of a disjoint type function}
Note that we have shown in Theorem~\ref{theo:disjointFatou} that the Julia set of $f$ is exactly those points whose orbit never lands in $D$. In particular, we have that $J(f) = \exp J(F)$, as promised. Hence, at least topologically, the study of $J(f)$ is equivalent to the study of $J(F)$. Notice that we also have that $\exp I(F) = I(f)$.

The following result is part of \cite[Proposition 2.10]{lassearclike}; see also \cite[Theorem B]{MR2290468}. We use this to show that $J(f)$ has uncountably many components, as well as illustrate the use of uniform expansion. An external address is called \emph{bounded} if it contains only finitely many symbols.
\begin{lemma}
\label{lemm:boundedorbits}
Suppose that $F \in \Blog$ is of disjoint type, and that $\underline{s}$ is a bounded external address. Then there is a unique point $z_0 \in J_{\underline{s}}(F) \setminus I(F)$. 
\end{lemma}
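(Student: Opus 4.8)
The plan is to establish existence and uniqueness separately, the unique point $z_0$ being characterised as the one point of $J_{\s}(F)$ with a \emph{bounded} orbit. Write $\s = s_0 s_1 \ldots$ and let $T^{(1)},\dots,T^{(m)}$ be the finitely many tracts occurring among the $s_n$; this finiteness is the only place where boundedness of $\s$ is used. By disjoint type $\overline{T^{(i)}} \subset H$, and since $\overline{\mathcal T}\cap\partial H=\emptyset$ while $F$ maps $\partial\mathcal T$ into $\partial H$, the orbit of any point of $J(F)$ in fact stays in the \emph{open} set $\mathcal T$; thus the conformal inverse branches $F^{-1}_{T^{(i)}}\colon H\to T^{(i)}$ may be composed freely along any itinerary. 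I would fix a base point $b\in H$; since there are only finitely many tracts, $D_0:=\max_i d_H(F^{-1}_{T^{(i)}}(b),b)<\infty$, and combining this with the uniform contraction $d_H(F^{-1}_{T}(w_1),F^{-1}_{T}(w_2))\le L^{-1}d_H(w_1,w_2)$ furnished by Lemma~\ref{lemm:unifexpansion}, a geometric-series estimate shows that every finite composition of inverse branches drawn from $\{F^{-1}_{T^{(1)}},\dots,F^{-1}_{T^{(m)}}\}$ sends $b$ to within hyperbolic distance $D^*:=D_0L/(L-1)$ of $b$.

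For existence I would put $\zeta_n:=F^{-1}_{s_0}\circ F^{-1}_{s_1}\circ\cdots\circ F^{-1}_{s_{n-1}}(b)$. Then $d_H(\zeta_{n+1},\zeta_n)\le L^{-n}D^*$, so $(\zeta_n)$ is Cauchy for the (complete) metric $d_H$ and converges to a point $z_0$; moreover, since $F\circ F^{-1}_{s_i}$ is the identity on $H$, for $0\le j\le n$ we have $F^j(\zeta_n)=F^{-1}_{s_j}\circ\cdots\circ F^{-1}_{s_{n-1}}(b)$, which is again such a composition applied to $b$, whence $d_H(F^j(\zeta_n),b)\le D^*$. Letting $n\to\infty$ and using the continuous extension of $F$ to $\overline{\mathcal T}$, one gets $F^i(z_0)\in\overline{s_i}$ for every $i$, so $z_0\in J_{\s}(F)$, and every $F^i(z_0)$ lies in the closed hyperbolic ball of radius $D^*$ about $b$, which is a compact subset of $H$. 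Hence the orbit of $z_0$ is bounded, so $z_0\notin I(F)$; in particular $\s$ is admissible.

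For uniqueness I would show that \emph{every} $z\in J_{\s}(F)\setminus I(F)$ has a bounded orbit and then invoke Lemma~\ref{lemm:bounded} (now applicable, as $\s$ is admissible). Since $z\notin I(F)$, there is $M$ with $\operatorname{Re} F^n(z)\le M$ for infinitely many $n$; each such $F^n(z)$ lies in $K:=\bigcup_i\big(\overline{T^{(i)}}\cap\{\operatorname{Re} w\le M\}\big)$, which — using the finiteness of the tract set together with the standard fact that $T\cap\{\operatorname{Re} w\le M\}$ is bounded for each tract $T$ — is a compact subset of $H$. For an arbitrary time $j$ I would choose a return time $n\ge j$ and write $F^j(z)=F^{-1}_{s_j}\circ\cdots\circ F^{-1}_{s_{n-1}}(F^n(z))$; the contraction estimate then gives $d_H(F^j(z),b)\le L^{-(n-j)}d_H(F^n(z),b)+D^*\le\sup_{w\in K}d_H(w,b)+D^*$. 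So the whole orbit of $z$ stays in a fixed compact subset of $H$; it is therefore bounded, and Lemma~\ref{lemm:bounded} forces $z=z_0$. Together with the previous paragraph this gives $J_{\s}(F)\setminus I(F)=\{z_0\}$.

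The step I expect to be the main obstacle is this last one: ruling out that a non-escaping point makes unboundedly large excursions between its returns to $\{\operatorname{Re} w\le M\}$. It requires both ingredients simultaneously — uniform contraction of inverse branches, so that pulling an orbit back from a return time does not magnify it, and the geometric input that each of the finitely many tracts in play meets every left half-plane in a bounded set, so that the return points themselves all lie in one fixed compact set. This geometric fact I would either cite or deduce quickly from the disjointness of a tract from its $2\pi i$-translates (equivalently, after applying $\exp$, from the fact that a tract of $f$ cannot surround the origin).
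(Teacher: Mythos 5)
Your proof is correct, and in spirit it parallels the paper's: both use the uniform contraction of the inverse branches coming from Lemma~\ref{lemm:unifexpansion}, the finiteness of tracts appearing in $\s$ to get a uniform bound, and a geometric series to control the pullbacks. The constructions differ in detail, though. For existence, the paper does not iterate a single base point; instead it fixes $\zeta \in H \setminus \mathcal{T}$, joins $\zeta$ to each $F^{-1}_{s_k}(\zeta)$ by curves $\Gamma_k$ of uniformly bounded hyperbolic length, and then concatenates the pullbacks $\gamma_k := F^{-1}_{s_0}\circ\cdots\circ F^{-1}_{s_{k-1}}(\Gamma_k)$ into a single curve $\gamma$ of finite hyperbolic length; the desired point $z_0$ is the endpoint of $\gamma$. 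This curve-based construction is equivalent to your Cauchy-sequence argument but also gives, for free, a piecewise smooth arc in $H \setminus J(F)$ landing at $z_0$ together with the quantitative bound $\ell_H \le KL/(L-1)$ on its images under $F^k$; the paper uses this kind of landing-curve machinery elsewhere, so it is the more economical choice there.

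Where your write-up actually adds value is uniqueness. The paper says only that uniqueness ``follows in a very similar way to the proof of Lemma~\ref{lemm:bounded},'' which nominally gives uniqueness among points with \emph{bounded} orbit, not among all of $J_\s(F)\setminus I(F)$; you correctly flag that one must first show every non-escaping point in $J_\s(F)$ actually has a bounded orbit, and you supply the argument. Your ``main obstacle'' step — that $\overline{T}\cap\{\operatorname{Re} w\le M\}$ is bounded for each tract $T$ — is indeed true, but your sketch justification (disjointness from $2\pi i$-translates, or ``a tract of $f$ cannot surround $0$'') is not quite enough on its own. The clean deduction uses property~\eqref{log6} of $\Blog$ (components of $\mathcal{T}$ accumulate only at infinity): if $w_n\in T$ with $\operatorname{Re} w_n$ bounded and $|\operatorname{Im} w_n|\to\infty$, subtract $2\pi i\lfloor \operatorname{Im} w_n/2\pi\rfloor$ to land in the bounded box $[-R_0,M]\times[0,2\pi]$ but in pairwise distinct $2\pi i$-translates of $T$, contradicting \eqref{log6}. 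With that filled in, your argument goes through and in fact records a step the paper leaves implicit.
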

\begin{proof}
Fix a point $\zeta \in H \setminus \mathcal{T}$. For each of the (finitely many) tracts $s_k$ in the address $\underline{s} = s_0 s_1 \ldots$, let $\Gamma_k \subset H \setminus J(F)$ be a smooth curve connecting $\zeta$ to $F^{-1}_{s_k} (\zeta)$. Let $K>0$ be sufficiently large that $\ell_H(\Gamma_k) \leq K$, for $k\in\No$. 

Set $\gamma_0 := \Gamma_0$ and then 
\[
\gamma_k := F^{-1}_{s_0} (F^{-1}_{s_1} (\ldots (F^{-1}_{s_{k-1}}(\Gamma_k)) \ldots )), \qfor k \in \N.
\]
See Figure~\ref{f4}.

\begin{figure}
	\includegraphics[width=14cm,height=10cm]{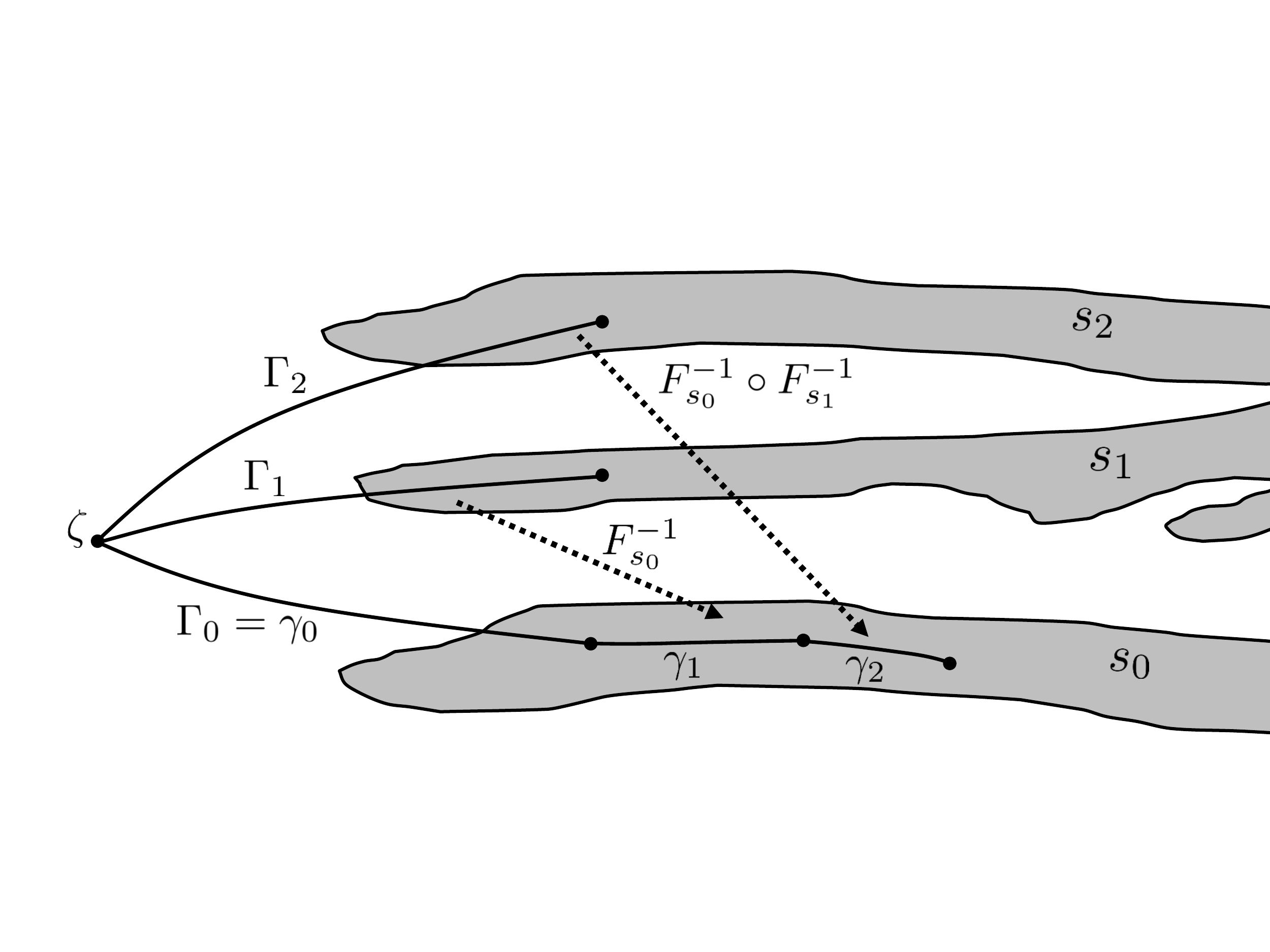}
  \caption{An illustration of the construction in the proof of Lemma~\ref{lemm:boundedorbits}.}\label{f4}
\end{figure}

It is easy to see that $\gamma_{k-1}$ and $\gamma_k$ share an endpoint. We also have, by Lemma~\ref{lemm:unifexpansion}, that $\ell_H(\gamma_{k+1}) \leq L^{-1} \ell_H(\gamma_k)$, for $k \in \No$, and so the hyperbolic lengths in $H$ of the $\gamma_k$ are bounded by a geometric sequence. It follows that 
\[
\gamma := \bigcup_{k \in\No} \gamma_k,
\]
is a piecewise smooth curve in $H \setminus J(F)$, beginning at $\zeta$, and having hyperbolic length in $H$ at most $\frac{KL}{L-1}$. We deduce that $\gamma$ has a finite endpoint $z \ne \zeta$, and moreover that $z \in J_{\underline{s}}(F)$. In addition, for each $k \in \No$, the curve $F^k(\bigcup_{j \geq k} \gamma_k)$ connects $\zeta$ to $F^k(z)$. Since this curve also is of hyperbolic length at most $\frac{KL}{L-1}$, we deduce that $z$ has a bounded orbit. 

The fact that $z$ is unique follows in a very similar way to the proof of Lemma~\ref{lemm:bounded}.
\end{proof}

We are now able to prove the following elementary topological characterisation of the Julia set of a disjoint type function, which is \cite[Corollary 2.11]{lassearclike}.
\begin{theorem}
\label{theo:Juncountable}
Suppose that $f$ is of disjoint type. Then the components of $J(f)$ are closed and unbounded. If, in addition, $f$ is of disjoint type, then $J(f)$ has uncountably many components.
\end{theorem}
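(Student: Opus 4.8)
The plan is to move everything to the logarithmic transform $F\in\Blog$ of $f$ (which is then of disjoint type), use the identity $J(f)=\exp J(F)$ proved earlier in this section, and exploit the Julia continua $\hat{J}_{\s}(F)$.

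First I would dispose of ``closed'': components of any closed set are closed, since the closure of a connected set is connected, and $J(f)$ is closed. For ``unbounded'', fix $\zeta\in J(f)$ and write $\zeta=\exp z$ with $z\in J(F)$ of external address $\s=s_0s_1\ldots$. Recall that $\hat{J}_{\s}(F)=J_{\s}(F)\cup\{\infty\}$ is a continuum containing $z$ and $\infty$; since, by a standard property of continua, $\infty$ lies in the closure of every component of $\hat{J}_{\s}(F)\setminus\{\infty\}$, the component $C_0$ of $z$ in $J_{\s}(F)$ is unbounded, and $C_0\subseteq\overline{s_0}$. I would then show $\sup_{w\in C_0}\Real w=+\infty$: otherwise $C_0$ lies in a horizontal strip $\{m\le\Real w\le M\}$ (the lower bound $m$ coming from property (B)) while being unbounded, so $\Imag(C_0)$ is an unbounded interval and $C_0$ meets the line $\{\Imag w=2\pi k\}$ for all large $k$; translating the resulting points down by $2\pi i k$ gives a bounded sequence whose $k$-th term lies in the closure of the tract $s_0-2\pi i k$, and these tracts are pairwise distinct by property (C) — contradicting property \eqref{log6}. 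Hence $\exp(C_0)$ is an unbounded connected subset of $J(f)$ through $\zeta$, so the component of $\zeta$ in $J(f)$ is unbounded.

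For the second statement, observe that $\mathcal{T}$ is non-empty, $2\pi i$-periodic, and has each component disjoint from its $2\pi i$-translates, so $F$ has infinitely many tracts; fix two of them, $s\ne t$. I would take the uncountable family $\Sigma:=\{\s\in\{s,t\}^{\No}:s_0=s\}$ of external addresses; each is bounded, so by Lemma~\ref{lemm:boundedorbits} each $J_{\s}(F)$, $\s\in\Sigma$, is non-empty, and for distinct $\s,\s'\in\Sigma$ the sets $J_{\s}(F)$, $J_{\s'}(F)$ are disjoint (as recalled before this section). They in fact lie in distinct components of $J(F)$: if $z\in J_{\s}(F)$ and $z'\in J_{\s'}(F)$ lay in one component $C$, and $k$ were the first index at which $\s,\s'$ differ, then $F^k(C)$ would be a connected subset of $J(F)\subseteq\overline{\mathcal{T}}$ meeting the two distinct tract-closures $\overline{s_k}$ and $\overline{s'_k}$; but for a disjoint-type function these closures are pairwise disjoint, and, being locally finite by property \eqref{log6}, each is relatively closed and open in $\overline{\mathcal{T}}$, so any connected subset of $\overline{\mathcal{T}}$ lies in a single one — a contradiction. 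Thus $J(F)$ has uncountably many components. Since $\exp$ is $2\pi i$-periodic it identifies the component of a point in $J(F)$ only with the components of its $2\pi i\Z$-translates, and the addresses in $\Sigma$ remain pairwise inequivalent under $2\pi i\Z$-translation; so — granting that $\exp$ induces a bijection between the $2\pi i\Z$-orbits of components of $J(F)$ and the components of $J(f)$ — the set $J(f)=\exp J(F)$ also has uncountably many components.

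The hard part will be two inputs used above. The first is that distinct tracts of a disjoint-type $F$ have disjoint closures; I would derive this from the corresponding statement for the tracts of $f$, which holds because $\partial\mathcal{V}\subseteq f^{-1}(\partial D)$ contains no critical point of $f$ (as $\CV(f)\subseteq S(f)\subset D$, so near $\partial\mathcal{V}$ the map $f$ is a local homeomorphism and each component of $\mathcal{V}$ occupies one side of a Jordan arc), then transfer this through $\exp$ using property (C). The second is the claim that $\exp$ does not merge components of $J(F)$ lying in different $2\pi i\Z$-orbits — a genuine point, since $J(f)$ is in general not locally connected, so the usual covering-space reasoning does not apply verbatim; one can either show directly that a connected subset of $J(f)$ lifts, up to a translation, to a connected subset of $J(F)$, or simply appeal to the corresponding results in \cite{lassearclike}. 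Everything else — the properties of the $\hat{J}_{\s}(F)$, the identity $J(f)=\exp J(F)$ for disjoint type, and the continuum-theoretic fact used in the first step — is either established earlier or entirely standard.
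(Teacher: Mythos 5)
Your proof is correct, and the uncountability half is essentially the paper's argument with the details filled in; but for unboundedness you take a genuinely different, more roundabout route. The paper does not pass to the logarithmic transform there at all: it uses the general class-$\B$ fact that $F(f)$ has no multiply connected components, so $J(f)\cup\{\infty\}$ is a continuum on the sphere, and then one application of boundary bumping immediately gives that each component of $J(f)$ is unbounded. You instead apply boundary bumping to $\hat{J}_{\s}(F)$, which forces the extra geometric step showing $\Real$ is unbounded on $C_0$ (otherwise $\exp(C_0)$ could be bounded); this is correct, modulo the small slip that property \eqref{log6} speaks of points of $\mathcal{T}$ rather than of $\overline{\mathcal{T}}$ (easily repaired by a subsequence/local-finiteness argument), but it costs you the disjoint-type hypothesis for a statement the paper obtains for all of $\B$. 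On uncountability your reasoning matches the paper's in substance, just spelled out, and you are right that the $\exp$-lifting step is where something genuine must be said --- the paper's phrase ``since $\exp$ is injective on $T$, these are pairwise disjoint'' elides it, as does its earlier ``these addresses each correspond to a different component.'' Your worry about local connectedness of $J(f)$ is, however, a red herring: the tract closures of $f$ are pairwise disjoint and locally finite (by exactly the critical-value argument you sketch), so any component $K$ of $J(f)\subset\overline{\mathcal{V}}$ lies in a single $\overline{V}$; and $\exp\colon\overline{T}\to\overline{V}$ is a proper injective continuous map, hence a homeomorphism, so $K$ lifts directly to a connected subset of $J(F)\cap\overline{T}$. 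No local connectedness enters.
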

\begin{proof}
Connected components of a closed set are closed. It is well-known \cite[Proposition 2]{MR1196102} that a function $f \in\B$ has no multiply-connected Fatou components, and so $J(f) \cup \{\infty\}$ is connected. It follows by a standard result of continuum theorem, known as the ``boundary bumping theorem'' (see \cite[Theorem 5.6]{MR1192552}), that every connected component of $J(f)$ is unbounded. 

Now suppose that $f$ is disjoint type, let $F$ be a logarithmic transform of $f$, and let $T$ be a tract of $F$. There are uncountably many bounded external addresses of $F$ with initial entry $T$. By Lemma~\ref{lemm:boundedorbits} these addresses each correspond to a different component of $J(F)$. Each component of $J(F)$ corresponds to a component of $J(f)$ under the exponential map, and since $\exp$ is injective on $T$, these are pairwise disjoint.
\end{proof}
Observe that if $f \in \B$, and $J$ is a component of $J(f)$, then $\hat{J} = J \cup \{\infty\}$ is a continuum; we term these sets the \emph{Julia continua} of $f$. Note that these sets are not to be confused with the Julia continua of $F$ defined in \eqref{eq:jcdef}.
\subsection{The topology of Julia set continua of a disjoint type function}
Rempe-Gillen \cite[Theorem 1.6]{lassearclike} has completely characterised the topology of Julia continua of a disjoint type function. Extremely roughly speaking, one might hope that, by pulling back compact sets as in \eqref{eq:pullback}, we might obtain something resembling an arc. In fact this is true, in a way that can be made precise. However, in order to do this we need some technical definitions from continuum theory; see, for example, \cite{MR1192552} for further background on this topic.
\begin{definition}
Suppose that $C$ is a continuum (in other words, a non-empty compact connected set). Then:
\begin{itemize}
\item $C$ \emph{has span zero} if any subcontinuum $A \subset C \times C$ whose first and second coordinates both project to the same subcontinuum of $C$, also contains a point of the form $(\xi, \xi)$. 
\item $x \in C$ is a \emph{terminal point} if, for any two subcontinua $A,B \subset C$ with $x \in A \cap B$, either $A \subset B$ or $B \subset A$. 
\item $C$ is \emph{arc-like} if for each $\epsilon > 0$ there is a continuous function $\phi : C \to [0, 1]$ such that the Euclidean diameter of the set $\phi^{-1}(t)$ is less than $\epsilon$, for $t \in [0,1]$.
\end{itemize} 
\end{definition}
As these definitions are quite complicated, we give the following rough, but more intuitive interpretations:
\begin{itemize}
\item A continuum $C$ has span zero if, when we try to exchange the positions of any two points in $C$ by moving them through $C$, we cannot do so without the points coinciding at some stage. For example, an arc has span zero, a circle does not.
\item Terminal points are a natural analogue of the endpoints of an arc. However, unlike an arc, a continuum may have many more than two terminal points.
\item A continuum is arc-like if it looks like a ``blurred'' arc at all levels of magnification. See Figure~\ref{farc}.
\end{itemize} 

\begin{figure}
	\includegraphics[width=14cm,height=10cm]{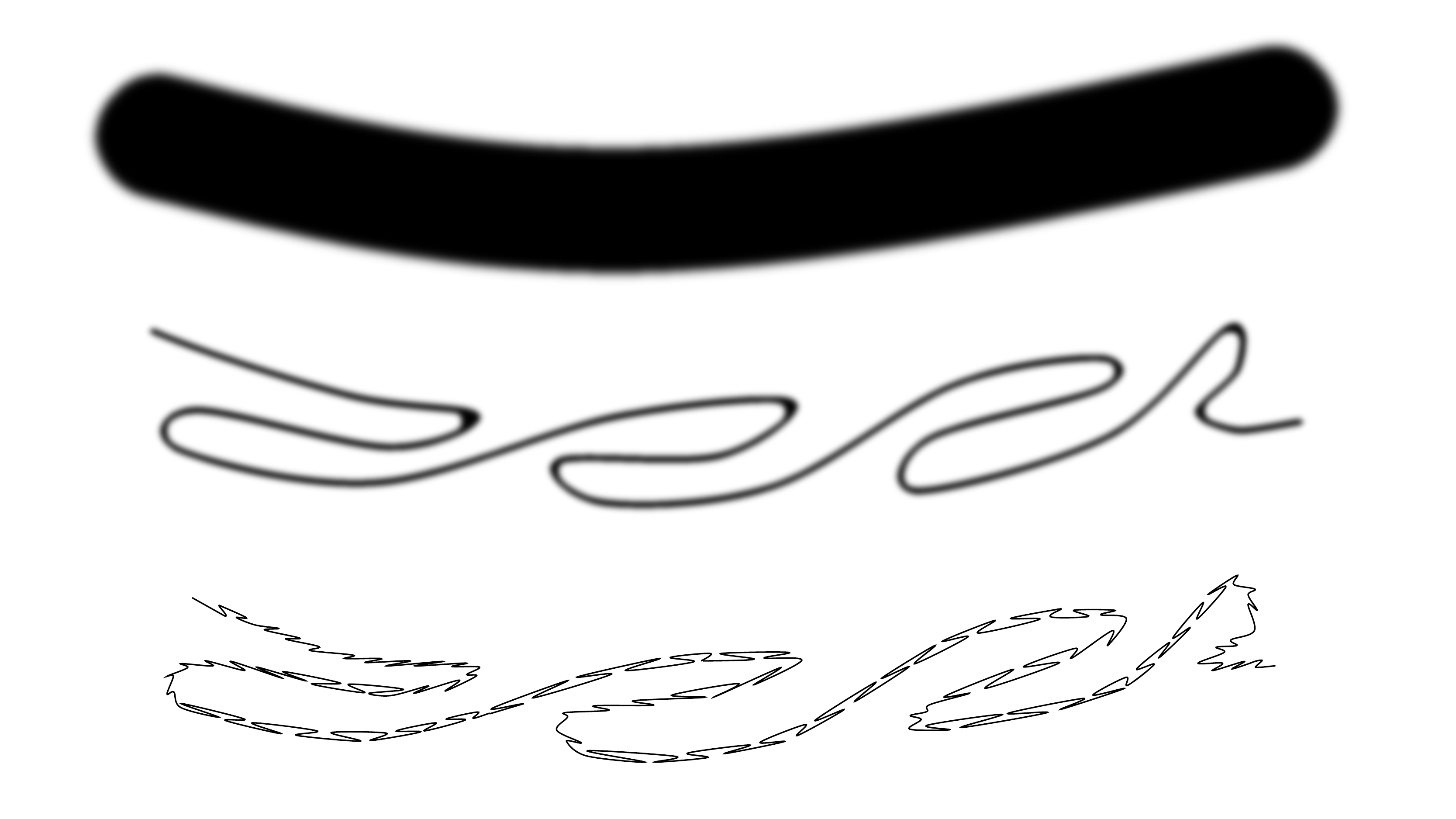}
  \caption{An illustration of an arc-like continuum, shown at three different ``magnifications''.}\label{farc}
\end{figure}

We also need to introduce a purely geometric constraint on tracts. If $F \in \Blog$, then we say that $F$ \emph{has bounded slope} if there exists a curve $\gamma : [0,\infty) \to \mathcal{T}$, which tends to $+\infty$, and a $C>0$, such that $$|\operatorname{Im}z| \leq C \operatorname{Re} z, \qfor z \in \gamma.$$ 
See Figure~\ref{fbad}. If $f \in \B$ has a logarithmic transform $F$ with bounded slope, then we say that $f$ has bounded slope.

We can now give a precise statement of Rempe-Gillen's result.
\begin{theorem}
\label{theo:lassearclike}
Suppose that $f \in \B$ is of disjoint type, and that $\hat{J}$ is a Julia continuum of $f$. Then $\hat{J}$ has span zero, and $\infty$ is a terminal point of $\hat{J}$. If,
in addition, f has bounded slope, then $\hat{J}$ is arc-like.
\end{theorem}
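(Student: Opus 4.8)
The plan is to transfer the problem to the logarithmic transform and then read off the topology from uniform hyperbolic expansion. Since $f$ is of disjoint type, $J(f)=\exp J(F)$ for a logarithmic transform $F$, and $\exp$ is injective on each tract of $F$; hence $\exp$ maps each continuum $\hat J_{\underline s}(F)$ of \eqref{eq:jcdef} homeomorphically onto a Julia continuum of $f$, sending $\infty$ to $\infty$, and every Julia continuum of $f$ arises this way. As span zero, being a terminal point, and arc-likeness are topological invariants, it suffices to prove the three assertions for $\hat J:=\hat J_{\underline s}(F)$, where $\underline s=s_0s_1\cdots$ is an admissible address of $F$.

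I would then work with the combinatorial model $\hat J=\bigcap_{n\ge 0}S_n$ of \eqref{eq:pullback}--\eqref{eq:jcdef}: a nested intersection of continua, each containing $\infty$; equivalently, $z\mapsto(F^n(z))_{n\ge 0}$ exhibits $\hat J$ as an inverse limit of the tract closures $\overline{s_n}\cup\{\infty\}$ (each topologically a closed disc, by property~(E)) under the bonding maps induced by $F$. The role of disjoint type is Lemma~\ref{lemm:unifexpansion}: along the address $\underline s$ the map $F$ is \emph{uniformly} hyperbolically expanding, so the inverse branches $F_{s_k}^{-1}$ are uniform contractions and distinct points of $\hat J$ are pulled apart at a definite exponential rate. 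Together with properties (C) and (F) -- the tracts are unbounded Jordan domains accumulating only at $\infty$ -- this controls how $\hat J$ may branch and wind.

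Span zero and the terminality of $\infty$ are the softer assertions, valid for every disjoint-type $F$, and I would derive them from the inverse-limit description using standard continuum theory. That $\infty$ is terminal amounts to the nesting of any two subcontinua of $\hat J$ that contain $\infty$; this follows from the ``end'' of $\hat J$ at $\infty$ being simple, which in turn comes from the tracts accumulating only at $\infty$, so that each $S_n$ meets a small sphere-neighbourhood of $\infty$ in one connected piece. Span zero then follows from this end-structure together with uniform expansion: given a subcontinuum $A\subseteq\hat J\times\hat J$ whose two projections coincide and which is disjoint from the diagonal, either $A$ already reaches $(\infty,\infty)$, or $A$ lies over a compact part of $\hat J$ and can be iterated forward by $F\times F$, whereupon uniform expansion keeps it away from the diagonal and a careful look at the end at $\infty$ yields a contradiction. (Alternatively one invokes Rempe-Gillen's general criterion for inverse limits of this type.)

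Arc-likeness, under the bounded slope hypothesis, is the hard part, and bounded slope is precisely a condition on how a tract approaches $\infty$ -- so that is where all the difficulty lies. I would prove it via the $\epsilon$-map characterisation of arc-likeness recalled above: for each $\epsilon>0$ produce a continuous $\phi:\hat J\to[0,1]$ with all fibres of diameter $<\epsilon$. The idea is to use the bounded slope curve $\gamma$ running out to $+\infty$ in the tract to fix a ``transverse position'' coordinate, transport it by the branches $F_{s_k}^{-1}$, and read off from the orbit of $z\in\hat J$ a value in $[0,1]$; uniform expansion makes $\phi$ well defined, continuous, and small-fibred on the bounded part of $H$, while bounded slope is exactly what keeps the transverse coordinate meaningful -- and the fibres small -- out near $\infty$. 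Without it a tract may oscillate so wildly that $\hat J$ is not chainable at all, and indeed there exist disjoint-type maps whose Julia continua fail to be arc-like. The main obstacle throughout is thus the analysis of the single end of $\hat J$ at $\infty$: on bounded parts of $H$ uniform expansion already does the work, and the extra geometric hypothesis is needed only to tame infinity.
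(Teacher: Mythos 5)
Your overall route is the same as the paper's: transfer to the logarithmic transform $F$, observe that $\exp$ carries $\hat J_{\underline s}(F)$ homeomorphically onto a Julia continuum of $f$, and then run the topological arguments on the $F$-side using the uniform hyperbolic expansion of Lemma~\ref{lemm:unifexpansion} together with the geometry of the tracts. That is precisely how the paper proceeds, and at roughly the same level of sketchiness (the paper also only sketches, deferring to Rempe-Gillen's original paper).

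Where you diverge, and where there is a genuine gap, is in the terminality argument. You attribute the ``simple end'' at $\infty$ to property~(F) (components of $\mathcal T$ accumulate only at $\infty$), claiming that each $S_n$ meets a small sphere-neighbourhood of $\infty$ in one connected piece. That claim is not obviously true — $S_n$ sits inside a single tract closure which near $\infty$ can wind badly — and in any case it is not the mechanism. The paper's actual argument is: given subcontinua $A,B\subset\hat J$ through $\infty$, push forward by $F^n$; both $F^n(A)$ and $F^n(B)$ lie in the same tract closure $\overline{s_n}$, which is disjoint from its $2\pi i$ translates (property~(C)), so it has ``width'' at most $2\pi$; hence for each $z\in B$ one finds $z_n\in A$ with $|F^n(z_n)-F^n(z)|\le 2\pi$; pulling back with the uniform contraction of the inverse branches forces $z_n\to z$, giving $B\subset A$ (or $A\subset B$ by symmetry). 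So the load-bearing input is property~(C) plus Lemma~\ref{lemm:unifexpansion}, not property~(F). Your span-zero sketch is closer to the paper's (iterate forward into a thin tract, observe that exchanging positions there forces $2\pi$-closeness, pull back by uniform contraction), though you never make the $2\pi$ width bound explicit and it is again the crucial geometric input. For arc-likeness the paper omits the proof entirely, so there is nothing to compare; your idea of using the bounded-slope curve as a transverse coordinate is a reasonable heuristic, but the paper gives no indication whether this matches the actual argument in the source.
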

\begin{remark}\normalfont
It can be shown to follow from the fact that $\infty$ is a terminal point of $\hat{J}$, that each set $J_{\s}(F)$ is connected, as mentioned earlier. It then follows that if $f \in \B$ is of disjoint type, and $F$ is a logarithmic transform of $f$, then every Julia continuum of $F$ is homeomorphic (via the exponential function) to a Julia continuum of $f$.
\end{remark}
\begin{proof}[Proof of Theorem~\ref{theo:lassearclike}]
We refer to \cite{lassearclike} for detailed proofs of these results, which all rely on the fact that any logarithmic transform, $F$, of a disjoint type map has uniform expansion in the hyperbolic metric. 

So that we can work with the transform, let $J$ be an unbounded continuum such that $\exp (J\setminus\{\infty\}) = \hat{J}\setminus\{\infty\}$. Since topological properties are preserved by the exponential map, it is sufficient to prove that these properties hold for $J$.
 
We first sketch a proof that $\infty$ is a terminal point of $J$. Suppose that $A, B$ are subcontinua of $J$ that contain $\infty$. Without loss of generality we can assume that, for infinitely many $n \in \N$, $F^n(A)$ contains a point of real part less than or equal to all the real parts of points of $F^n(B)$. We claim that we can deduce from this that $B \subset A$; it is clear that the result follows.

Suppose that $z \in B$. By a geometric argument, it can be shown that for each value $n$ above, there is a point $z_n \in A$ such that $|F^n(z_n) - F^n(z)| \leq 2\pi$; essentially this follows from the fact that the $F^n(A)$ and $F^n(B)$ lie in the same tract. Pulling back, and using Lemma~\ref{lemm:unifexpansion}, it follows that there exists a sequence of points of $A$ that tend to $z$. Since $A$ is closed, it follows that $z \in A$, as required.

Next we sketch, very roughly, a proof that $J$ has span zero. No tract $T$ can intersect a $2 \pi i$ translate of itself. It follows that two points cannot exchange position by moving inside $T$ without coming within a distance of $2\pi$ from each other. Let $\underline{s} = s_0 s_1 \ldots$ be the external address of $J$. By applying this observation to the tract $s_n$, for $n$ large, and using the expanding property of $F$, we see that two points cannot cross each other within $J$ without passing within distance $\epsilon$ of each other, for all $\epsilon > 0$. The result follows.

We omit the proof of the final statement.
\end{proof}
In fact, in Theorem~\ref{theo:lassearclike} we have only quoted half of \cite[Theorem 1.6]{lassearclike}. The other half of \cite[Theorem 1.6]{lassearclike} shows that, essentially, Theorem~\ref{theo:lassearclike} is strong. This result is as follows.
\begin{theorem}
\label{theo:lassearclikeexample}
There is a disjoint type function $f \in \B$, of bounded slope, with the following property. If $X$ is any arc-like continuum with a terminal point $x$, then there exists a Julia continuum, $\hat{J}$, of $f$, and a homeomorphism $\psi : X \to \hat{J}$ such that $\psi(x) = \infty$.
\end{theorem}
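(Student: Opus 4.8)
The plan is to pass to the logarithmic model, represent an arbitrary arc-like continuum as an inverse limit of arcs, and then construct a single disjoint-type model in $\Blog$ whose pull-back combinatorics is flexible enough to realize every such inverse limit.

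First I would reduce to class $\Blog$. It suffices to produce a disjoint-type $F \in \Blog$ of bounded slope (so that, in particular, $\overline{\mathcal{T}} \subset H$) with the analogous property: for every arc-like continuum $X$ with terminal point $x$ there is an external address $\underline s$ and a homeomorphism from $X$ onto the Julia continuum $\hat J_{\underline s}(F)$ of \eqref{eq:jcdef} carrying $x$ to $\infty$. The passage back to a genuine $f \in \B$ is then handled by the realization procedure discussed in Section~\ref{s.bishop} (``recovering'' a {\tef} from an $F \in \Blog$ with $\overline{\mathcal{T}} \subset H$), together with the Remark following Theorem~\ref{theo:lassearclike}, which guarantees that each $J_{\underline s}(F)$ is connected and is mapped homeomorphically by $\exp$ onto a Julia continuum of $f$. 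On the continuum-theory side I would use the classical fact (Mardešić--Segal) that a continuum is arc-like if and only if it is homeomorphic to an inverse limit $\varprojlim([0,1],g_n)$ with each $g_n:[0,1]\to[0,1]$ a continuous surjection; a short additional argument, using the definition of terminal point, lets one choose the representation so that $x$ corresponds to the thread $(1,1,1,\dots)$ and each $g_n$ fixes $1$ (if necessary telescoping the $g_n$ into long compositions to make the point $1$ suitably ``isolated'' in the system).

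The core is the construction of $F$. I would take the tracts to be thin, Jordan strip-like domains contained in $H$, lying inside a fixed bounded-slope cone $|\Imag z|\le C\Real z$ and escaping to $+\infty$, and I would let the prescribed external address $\underline s=s_0s_1\cdots$ cycle through a countable family of such ``model'' strips, the $k$-th strip being designed -- by wiggling transversally in a pattern dictated by $g_k$ -- so that the Riemann map $F$ of that strip onto $H=\{\Real z>0\}$, when restricted to the relevant part of $H$ and read in the natural ``height'' coordinate on the strip, induces on $[0,1]$ a map as close to $g_k$ as we wish; standard estimates for conformal maps of thin strips make this controllable, and one keeps the pattern $2\pi i$-periodic across translates. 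Since $\overline{\mathcal{T}}\subset H$, Lemma~\ref{lemm:unifexpansion} gives uniform hyperbolic expansion, so the inverse branches $F^{-1}_{s_0}\circ\cdots\circ F^{-1}_{s_{k-1}}$ are uniformly contracting and the Euclidean diameters of the level-$k$ pieces $S_k$ of \eqref{eq:pullback} tend to $0$. Consequently $\hat J_{\underline s}(F)=\bigcap_k S_k$ is identified with the inverse limit of its level-$k$ approximants under $F$, i.e.\ with a system $([0,1],\tilde g_k)$ where $\tilde g_k$ is the induced bonding map. If the construction is refined so that $\tilde g_k=g_k$ exactly, we are done immediately; otherwise one runs an approximation scheme in which, after telescoping, a sufficiently small $C^0$-error in each $\tilde g_k$ does not change the homeomorphism type of the inverse limit, so that $\varprojlim([0,1],\tilde g_k)\cong\varprojlim([0,1],g_k)\cong X$. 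Because the strips escape to $+\infty$ and the thread $(1,1,\dots)$ corresponds to points whose only accumulation point on the sphere is $\infty=F(\infty)$, that thread is carried to $\infty$; as $x=(1,1,\dots)$ in our chosen representation, the resulting homeomorphism $\psi:X\to\hat J_{\underline s}(F)$ -- and hence, via $\exp$, onto a Julia continuum of $f$ -- satisfies $\psi(x)=\infty$. Disjoint type, bounded slope, and $f\in\B$ all hold by construction.

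The main obstacle is the rigidity of conformality: one is not free to prescribe the topological pull-back pattern of a tract, since $F$ must be a \emph{conformal} isomorphism of each tract onto $H$. Realizing the prescribed interval bonding maps -- or sufficiently good approximations of them -- by genuinely conformal maps of explicit thin, wiggly, bounded-slope strips, uniformly over all levels $k$ and compatibly with $2\pi i$-periodicity, is the technical heart of the argument; it is here that precise estimates for conformal maps of strips (or, alternatively, a quasiconformal surgery of Bishop type followed by the $\Blog$-to-$\B$ realization of Section~\ref{s.bishop}) are needed, together with the continuum-theoretic bookkeeping that upgrades $C^0$-approximate bonding maps to an exact homeomorphism of inverse limits.
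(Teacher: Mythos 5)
Your proposal takes essentially the same route as the paper's sketch: you represent an arc-like continuum with a terminal point as an inverse limit of interval maps, ``draw'' tracts in $\Blog$ so that the Riemann map realizes (approximations of) the bonding maps, use the uniform hyperbolic expansion of Lemma~\ref{lemm:unifexpansion} to identify the Julia continuum with the inverse limit, and then pass from $\Blog$ to a genuine $f \in \B$ via the Bishop construction of Section~\ref{s.bishop}. This matches the proof outline given in the paper and attributed to Rempe-Gillen, including the two technical pressure points you correctly isolate (realizing prescribed bonding maps conformally, and the continuum-theoretic bookkeeping that upgrades approximate bonding maps to an exact homeomorphism of inverse limits).
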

Roughly speaking, this remarkable result says that the single function $f$ has a Julia continuum homeomorphic to every Julia continuum permitted by Theorem~\ref{theo:lassearclike}; and there are uncountably many of these.
\begin{proof}[The idea behind the proof of Theorem~\ref{theo:lassearclikeexample}]
We do not attempt to prove this theorem, which requires a number of results from continuum theory, and is somewhat technical. The crux of the proof is the observation that we can create Julia continua of a function in the class $\Blog$, with certain properties, by ``drawing'' sufficiently complicated tracts and then applying a Riemann map. This idea can be made precise.

Rempe-Gillen classifies each arc-like continuum with a terminal point in terms of a so-called \emph{inverse limit}, and then shows how to ``draw'' the correct tract based on the inverse limit. Julia continua of a function in the class $\B$ are then obtained by using the construction of Bishop; see Section~\ref{s.bishop}.
\end{proof}
%
%
%
%
%
\section{Finite order functions in the class $\B$}
\label{s.finorder}
In this section we discuss a different subclass of the class $\B$; those functions of finite order. At face value this seems an unexpected property to study in a dynamical setting since, in general, it is not preserved under iteration. However will see that some particularly strong and important results hold for these functions that do not hold in general. In particular, we will show that for functions $f\in\B$ of finite order, all components of $I(f)$ are path-connected and unbounded. We will then show that if $f$ is also of disjoint type, then its Julia set has a distinctive topological structure known as a Cantor bouquet. Finally we will see that for finite order functions in the class $\B$, the result of Theorem~\ref{theo:dimension} can be improved significantly.
\subsection{Eremenko's conjecture}
In \cite{MR1102727} Eremenko studied the escaping set $I(f)$ of a {\tef} $f$. He showed that all components of $\overline{I(f)}$ are unbounded, and conjectured that all components of $I(f)$ are unbounded; this conjecture, known as \emph{Eremenko's conjecture} is still open, despite much progress. He also conjectured that every point of $I(f)$ can be joined to infinity by a curve in $I(f)$; this conjecture is known as the \emph{strong version of Eremenko's conjecture}.

In \cite[Theorem 1.1]{MR2753600} the authors construct a disjoint type function $f \in \B$ such that all path-connected components of $J(f)$ -- and so all path-connected components of $I(f)$ -- are bounded, showing that the strong version of Eremenko's conjecture does not hold in general. In fact, it is even possible to construct this function so that $I(f)$ contains no arcs. Very roughly, the authors define a tract which ``wiggles'' in a very complicated way, and use a Riemann map to obtain a function $F \in \Blog$ of disjoint type. It is then shown that the geometry of the tract leads to all path-components of $I(F)$ being singletons. A function $f \in \B$ with the required properties is then constructed from $F$. (This approach was, in fact, a precursor to the more complicated construction in Theorem~\ref{theo:lassearclikeexample}.)

In the same paper the authors showed that the strong version of Eremenko's conjecture does hold for a certain subclass of the class $\B$, which we introduce next.
\subsection{Finite order functions}
First we give a definition. We say that a {\tef} $f$ has \emph{finite order} if
\[
\log \log |f(z)| = O(\log |z|), \quad\text{as } |z| \rightarrow\infty.
\]
This property can easily be translated to the class $\Blog$. We say that $F\in\Blog$ has \emph{finite order} if
\[
\log \operatorname{Re} F(z) = O(\operatorname{Re} z), \quad\text{as } \operatorname {Re} z \rightarrow\infty \text{ in } \mathcal{T}.
\]
It is easy to see that if $F$ is a logarithmic transform of $f$, then $f$ has finite order exactly when $F$ has finite order.

Many of the well-known examples of class $\B$ functions are of finite order; all maps in the exponential or cosine families, for example, are of finite order. The next result \cite[Theorem 1.2]{MR2753600} shows that the strong version of Eremenko's conjecture holds for finite order functions in the class $\B$. Note that, for functions which are also of disjoint type, this was also proved in \cite{MR2318569}. 
\begin{theorem}
\label{theo:finiteorder}
Suppose that $f \in \B$ is of finite order. Then every point $z \in I(f)$ can be connected to infinity by a curve $\gamma \subset I(f)$, on which the iterates of $f$ tend to infinity uniformly.
\end{theorem}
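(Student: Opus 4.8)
The plan is to work entirely in the logarithmic coordinate. By the remarks preceding the statement, it suffices to prove the analogous assertion for a logarithmic transform $F \in \Blog$ of $f$ of finite order: every point of $I(F)$ can be joined to $\infty$ by a curve in $I(F)$ on which $\operatorname{Re} F^n \to +\infty$ uniformly. (One then transports the curve back to the dynamical plane via $\exp$, noting that a curve escaping uniformly in real part in $\mathcal T$ maps under $\exp$ to a curve on which $|f^n|\to\infty$ uniformly — one must be a little careful that the endpoint lands on the prescribed point of $I(f)$ rather than merely in $\exp^{-1}$ of it, but that is handled by choosing the right lift.) The heart of the argument is a construction of "escaping rays": for a suitable set of admissible external addresses $\s$ one shows that $J_{\s}(F) \cap I(F)$ contains an unbounded curve, and that $I(F)$ is exactly the union of such curves.

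The main step is the finite-order hypothesis, which is used exactly once but crucially, to control the size of the tracts. First I would record the elementary consequence of $\log \operatorname{Re} F(z) = O(\operatorname{Re} z)$: there are constants $A>1$, $x_0>0$ so that $\operatorname{Re} F(z) \le \exp(A \operatorname{Re} z)$ whenever $\operatorname{Re} z \ge x_0$, $z\in\mathcal T$. Dually — and this is the form actually needed — this bounds from below how much a tract $T$ "narrows" as one moves out along it: writing $F_T^{-1}:H\to T$ and using the expansion Lemma~\ref{lemm:expansion} together with the standard hyperbolic estimates \eqref{eq.hypest} and the fact that $T$ contains no disc of radius $\pi$, one gets that $F_T^{-1}$ contracts Euclidean distances by a factor that is at least polynomial in $\operatorname{Re} z$, i.e. $|(F_T^{-1})'(z)| \ge c/\operatorname{Re} z$ is false in general, but the finite-order bound gives $|(F_T^{-1})'(z)| \ge \exp(-A\operatorname{Re} z)$-type control that, after summing a rapidly growing orbit, is still summable. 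Concretely: along an orbit $z_0, z_1=F(z_0),\dots$ in $I(F)$ with $\operatorname{Re} z_n \to\infty$, one has $\operatorname{Re} z_{n+1} \le \exp(A\operatorname{Re} z_n)$, which prevents the real parts from growing "too fast", and this is precisely what makes the pull-back sums below converge.

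Now the construction. Fix $z_0 \in I(F)$ with external address $\s = s_0 s_1 \ldots$ and let $t_n := \operatorname{Re} F^n(z_0) \to \infty$. For each $n$, inside the tract $s_n$ consider the vertical-ish cross-cut of $s_n$ at real part $t_n$ through the point $F^n(z_0)$, or more robustly the hyperbolic geodesic segment in $s_n$ joining $F^n(z_0)$ to a nearby boundary-escaping curve; call a suitable such connecting arc $\Gamma_n \subset \overline{s_n}$, chosen so $\exp F^n(z_0)$-side and the next point $F^{n+1}(z_0)$ are compatibly linked, exactly as in the proof of Lemma~\ref{lemm:boundedorbits} but now with escaping rather than bounded target. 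Pull back: set $\gamma_n := F_{s_0}^{-1}\circ \cdots \circ F_{s_{n-1}}^{-1}(\Gamma_n)$, so consecutive $\gamma_n$ share an endpoint and their union $\gamma := \bigcup_n \gamma_n$ is a curve from $z_0$ heading outward. Two estimates close the argument: (i) the Euclidean diameters of the $\gamma_n$ are summable — this uses the finite-order contraction bound of the previous paragraph, since the growth of $t_n$ is at most iterated-exponential-of-a-linear-function, which the super-exponential contraction of the composed inverse branches defeats — so $\gamma$ has a finite outer endpoint, which must be $z_0$ (by uniqueness, essentially Lemma~\ref{lemm:bounded}/\ref{lemm:boundedorbits}); and (ii) for each $k$, the forward image $F^k\bigl(\bigcup_{j\ge k}\gamma_j\bigr)$ is a curve from $F^k(z_0)$ on which the real part is $\ge t_k - O(1)$ throughout, giving uniform escape. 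Hence $\gamma \subset I(F)$ and joins $z_0$ to $\infty$ with uniform escape.

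The step I expect to be the real obstacle is (i): making the convergence of $\sum_n \operatorname{diam}\gamma_n$ genuinely quantitative. The naive expansion estimate \eqref{eq.expansion} gives $|F'| \gtrsim \operatorname{Re} F - R$, hence $|(F_{s_n}^{-1})'| \lesssim 1/(\operatorname{Re}(\cdot) - R)$ at the relevant point — this alone is not obviously enough to beat the growth of the $t_n$, and a careless bound makes the series diverge. The finite-order hypothesis enters exactly here, through a sharper comparison: because $\operatorname{Re} F(z) \le \exp(A\operatorname{Re} z)$, the point $F^{n}(z_0)$ has real part at most a tower of height $n$ over a linear function of $t_0$, while the composed inverse branch $F_{s_0}^{-1}\circ\cdots\circ F_{s_{n-1}}^{-1}$, by chaining the Koebe/hyperbolic estimates through all $n$ tracts, contracts by at least a matching tower — and a careful bookkeeping shows the contraction wins by a definite ratio at each stage, so the diameters decay geometrically. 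Assembling this bookkeeping correctly (and checking it is uniform over the starting point, which is what yields the uniform-escape clause) is the technical crux; everything else is the standard pull-back-of-curves machinery already used in Section~\ref{s.disjoint}. One further point requiring care, since $f$ need not be of disjoint type, is that the orbit may pass near $D$ for the first few steps; one handles this by first replacing $z_0$ with $F^k(z_0)$ for $k$ large enough that $\operatorname{Re} F^n(z_0)$ exceeds the relevant threshold $R$ for all $n \ge 0$, constructing the escaping curve from there, and then pulling it back the finitely many remaining steps — each pull-back step only adds a bounded, controlled piece and does not affect escape.
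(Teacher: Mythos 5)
Your route is genuinely different from the paper's, and I believe it has a real gap. The paper's proof proceeds in four stages: (1) finite order implies the tracts have two \emph{geometric} properties, bounded slope and uniformly bounded wiggling; (2) these geometric conditions imply a \emph{uniform linear head-start condition} (roughly: if $\operatorname{Re} z_2 > K(\operatorname{Re} z_1)^+ + M$ then the same inequality propagates under $F$); (3) the head-start condition induces a total order $\succ$ on each Julia continuum $\hat{J}_{\s}(F)$, which is then shown to be order-isomorphic (and homeomorphic) to a compact interval, with escaping points forming the ``upper'' part; (4) transfer back via $\exp$. The curve is extracted from the \emph{structure} of the continuum, not built piece by piece.

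You instead try to construct the curve directly by pulling back arcs $\Gamma_n$ and summing diameters. There are two problems. First, the role of finite order is misidentified. The summability of $\operatorname{diam}\gamma_n$ is not where finite order is needed: the basic estimate $|(F_T^{-1})'(z)| \leq 4\pi/(\operatorname{Re} z - R)$ already gives a product $\prod_k (t_k - R)^{-1}$ which tends to zero super-exponentially along \emph{any} escaping orbit, with or without the finite-order bound $t_{n+1} \leq e^{At_n}$ (if anything, faster escape gives \emph{better} contraction). Where finite order actually bites is geometric: a tract with unbounded wiggling can fold back on itself so badly that the order by real part is not preserved under $F$, and a point to the ``right'' of $z_0$ in one iterate can fall ``behind'' later and fail to escape. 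This is exactly the mechanism behind the counterexample in \cite[Theorem~1.1]{MR2753600}, where path-components of $J(F)$ are singletons. Your argument never uses, and never establishes, anything that rules this out. Consequently even if your diameters are summable, nothing guarantees that the limit arc $\gamma$ is contained in $I(F)$; that is precisely what the head-start condition provides. Second, the construction of the $\Gamma_n$ does not chain consistently: if each $\Gamma_n$ passes through $F^n(z_0)$, then the linking condition $F(b_n)=a_{n+1}$ forces $b_n = a_n$, a degenerate loop. (In Lemma~\ref{lemm:boundedorbits} the arcs chain because they connect a fixed reference point $\zeta$ to $F_{s_k}^{-1}(\zeta)$; the escaping analogue requires a moving reference point, and then the claim that the limit endpoint is $z_0$ needs its own argument, not an appeal to the bounded-orbit uniqueness of Lemma~\ref{lemm:bounded}.) In short: without passing through bounded slope/wiggling and the head-start condition, the estimates you sketch do not deliver that $\gamma \subset I(F)$, and the chaining as written is incoherent.
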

\begin{proof}
The proof of this result proceeds in four stages. We outline the ideas without providing all the details necessary for a rigorous proof.

First, let $F$ be a logarithmic transform of $f$. It is shown that the tracts of $F$ have two geometric properties which constrain the dynamics from being too ``pathological''; clearly the tracts constructed in the proof of \cite[Theorem 1.1]{MR2753600} do not have these geometric properties. The first property is that the tracts have bounded slope, which we defined earlier. The second is that the tracts have \emph{uniformly bounded wiggling}. Geometrically speaking, a tract has bounded wiggling if it cannot ``double back'' on itself by too much.
\begin{definition}
Let $F \in \mathcal{B}_{\log}$. A tract $T$ of $F$ has \emph{bounded wiggling} with constants $K> 1$ and $\mu >0$ if, for each point $w_0\in \overline{T}$, every point $w$ on the hyperbolic geodesic of $T$ that connects $w_0$ to $\infty$ satisfies
$$(\operatorname{Re}w)^+ > \frac{1}{K} \operatorname{Re}w_0 - \mu.$$
(Here $t^+ := \max\{t,0\}$, for $t \in \R$.) 
If all the tracts of $F$ have bounded wiggling for the same constants, then we say that the tracts have \emph{uniformly bounded wiggling}.
\end{definition}

The function $F$ can be shown to have tracts with bounded slope and uniformly bounded wiggling by an argument from the fact it is of finite order, together an application of hyperbolic geometry; we omit the detail.

\begin{figure}
	\includegraphics[width=14cm,height=8cm]{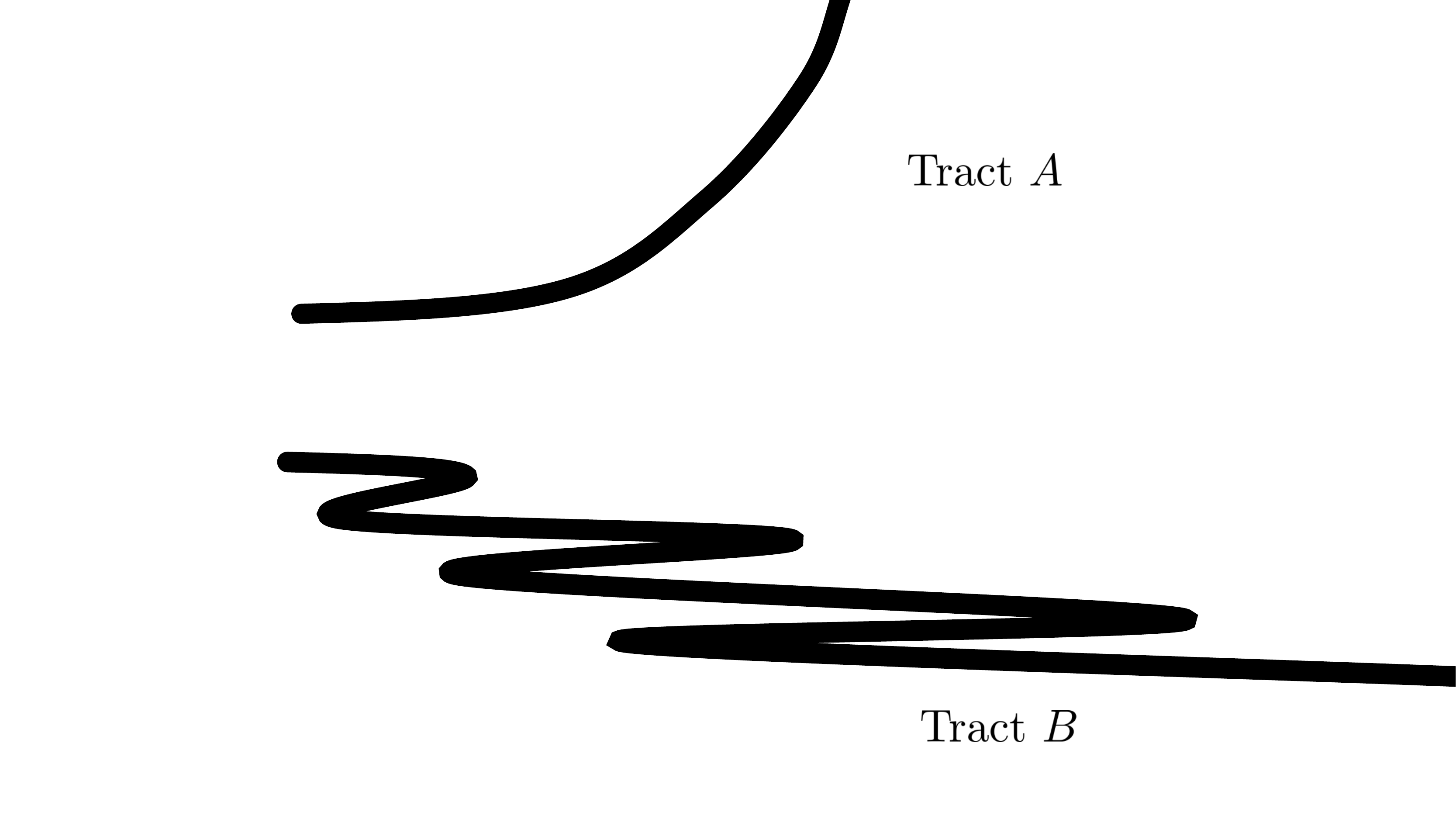}
  \caption{Tract $A$ does not have bounded slope. Tract $B$ does not have bounded wiggling. These configurations are both impossible for a function of finite order.}\label{fbad}
\end{figure}

\begin{remark}\normalfont
We note here that, in fact, the rest of the proof can be completed for any $f \in \B$ that has tracts with bounded slope and uniformly bounded wiggling. This is the case, for example, if $f$ is a finite composition of functions in class $\B$ each of finite order.
\end{remark}

The second stage is to show that these two geometric conditions imply that $F$ satisfies a so-called \emph{uniform linear head-start condition}. This condition holds if there exist $K>1$ and $M>0$ with the following property. If $z_1, z_2$ are points in the closure of some tract ${T}$, such that $F(z_1), F(z_2)$ are in the closure of some tract ${T'}$, then
\begin{equation}
\label{eq:roughly}
\operatorname{Re} z_2 > K(\operatorname{Re} z_1)^+ + M \implies \operatorname{Re} F(z_2) > K(\operatorname{Re} F(z_1))^+ + M.
\end{equation}
Once again the argument is from hyperbolic geometry. If the tract is a horizontal strip, then the argument at the end of Subsection~\ref{subs:hyperbolic} can be used to complete the result; very roughly speaking, the condition on the left-hand side of \eqref{eq:roughly} says that $z_1$ and $z_2$ are a large hyperbolic distance apart in $T$, and then the right-hand side of \eqref{eq:roughly} can then be deduced from the fact that their images must be a large distance apart in $H$. The argument in generality is similar, although more complicated.

The third stage is to show that a head-start condition induces an ordering on each Julia continuum of $F$. (Note that, for simplicity, we are now assuming that $F$ is disjoint type, and so the Julia continua exist as defined earlier. The case that $F$ is not of disjoint type is only slightly more complicated.) 

Let $\underline{s}$ be any admissible external address. For each $z_1, z_2 \in J_{\underline{s}}(F)$ we write $z_1 \succ z_2$ if there exists $k \in \N$ such that
\[
\operatorname{Re} F^k(z_1) > K(\operatorname{Re} F^k(z_2))^+ + M,
\]
where $K, M$ are the constants from the uniform linear head-start condition. Setting $\infty \succ z$, for $z \in J_{\underline{s}}(F)$, it can be shown that $(\hat{J}_{\underline{s}}(F), \succ)$ is a totally ordered space, and that the order topology on $(\hat{J}_{\underline{s}}(F), \succ)$ agrees with the metric topology on $\hat{J}_{\underline{s}}(F)$. (An ordered space $(X, \succ)$ is \emph{totally ordered} if for any two $x, y \in X$ either $x \succ y$ or $y \succ x$.) It follows by a result in continuum theory that $\hat{J}_{\underline{s}}(F)$ is homeomorphic to a compact interval. Moreover, it can be shown that if $z_1 \succ z_2$ and $z_1 \ne \infty$, then $z_1 \in I(F)$. The result for $F$ follows at once.

To complete the proof, suppose that $z \in I(f)$. (Recall the sets $D$ and $W$ in Figure~\ref{f1}). The orbit of $z$ must eventually leave $D$, so choose $z' = f^k(z)$ so that $f^n(z') \in W$, for $n \in \N$. Choose a point $w$ such that $z' = \exp w$; then $w \in I(F) \subset J(F)$. We can let $\underline{s}$ denote the address of $w$. We then apply the above argument to $J_{\underline{s}}$ to obtain a curve $\gamma' \subset I(F)$ joining $w$ to $\infty$. Finally let $\gamma$ be the component of $f^{-k}(\exp(\gamma'))$ containing $z$.
\end{proof}
This result was strengthened in \cite[Theorem 1.2]{MR2675603}, where it was shown that all the points of $\gamma$, except possibly $z$ itself, lie in the \emph{fast escaping set}, first defined in \cite{MR1684251}. This is a much-studied subset of the escaping set, containing all those points that escape to infinity as fast as possible (in a sense that can be made precise). We omit further detail, but refer the interested reader to \cite{Rippon01102012} which gives a detailed study of this set.
\subsection{Cantor bouquets}
Suppose that $f \in \B$ has finite order. Theorem~\ref{theo:finiteorder} says that the escaping set -- and hence the Julia set -- contains curves. However, it tells us very little about the topology of these curves.

If we make the additional assumption that $f$ is of disjoint type, then we can make a very clear statement about the topology of the Julia set. To this end, we need to define a topological structure known as a \emph{Cantor bouquet}. In fact a Cantor bouquet is defined as a subset of $\C$ that is ambiently homeomorphic to a straight brush, so it is this latter object that we need to define; see \cite{MR1182980} and also Figure~\ref{fbrush}. (Note that two sets $A, B \subset \C$ are ambiently homeomorphic if there is a homeomorphism $\phi : \C \to \C$ such that $\phi(A) = B$.)

\begin{figure}
	\includegraphics[width=13cm,height=7cm]{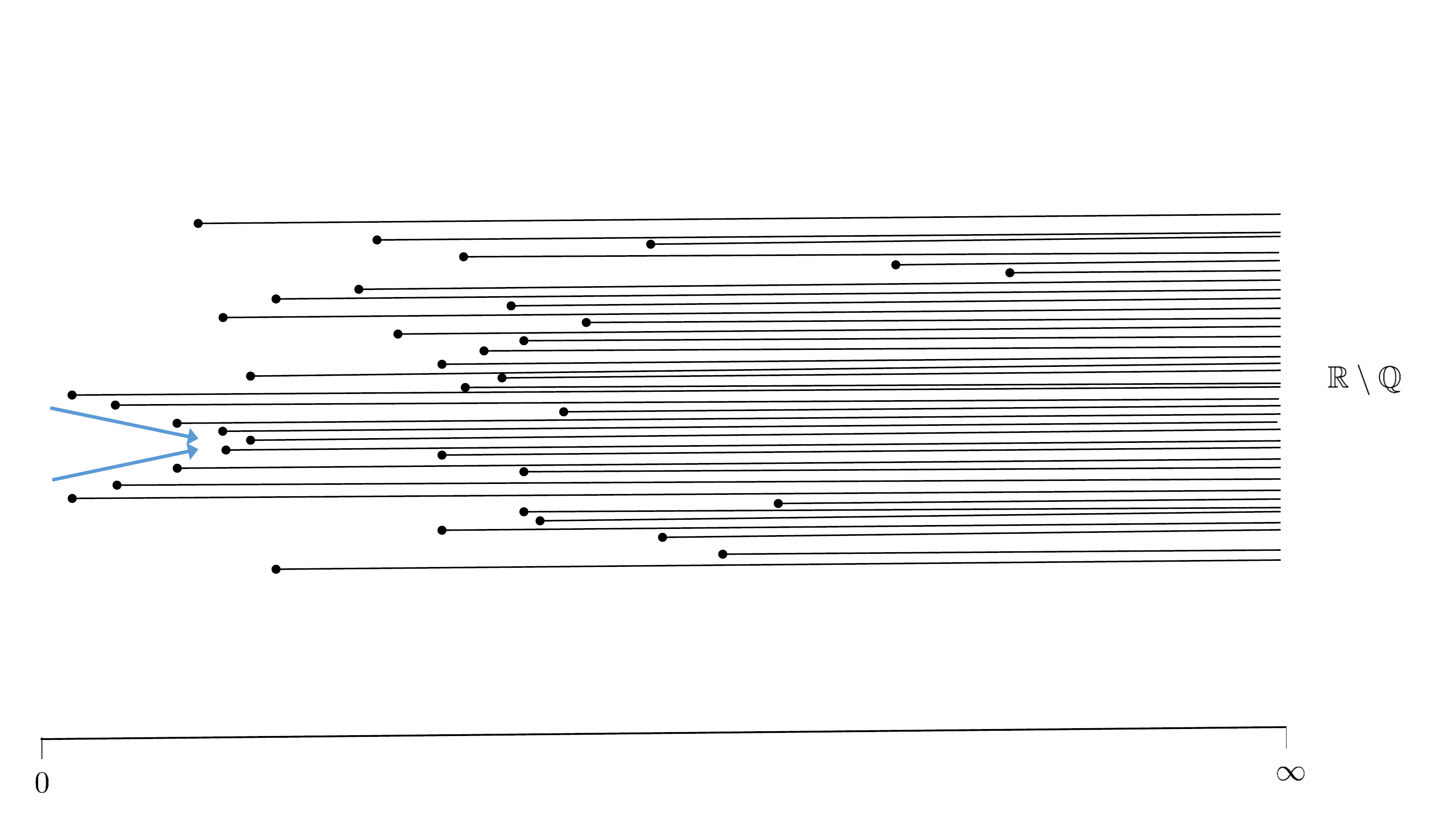}
  \caption{An illustration of (part of) a straight brush. The arrows indicate some ``hairs'' tending to another hair ``from below'' and ``from above''.}\label{fbrush}
\end{figure}

\begin{definition}
Suppose that $B$ is a closed subset of $[0, +\infty) \times (\R \setminus \Q)$. For convenience, let $H := \{ y : (x, y) \in B \text{ for some } x \}$. Then $B$ is called a \emph{straight brush} if
\begin{enumerate}[(a)]
\item The set $H$ is dense in $(\R \setminus \Q)$.\label{brush1}
\item For each $y \in H$, there exists $t_y \geq 0$ such that $\{ x : (x, y) \in B \} = [t_y, +\infty).$ We call $[t_y, +\infty) \times y$ the \emph{hair attached} at $y$. We call $(t_y, y)$ the \emph{endpoint}.\label{brush2}
\item For each $(x, y) \in B$ there are two sequences of hairs attached at $a_n, b_n$ respectively, such that $a_n < y < b_n$, and, as $n \rightarrow\infty$, $a_n, b_n \rightarrow y$, and $t_{a_n}, t_{b_n} \rightarrow t_y$.\label{brush3}
\end{enumerate}
\end{definition}

%
We then have the following exact topological description of the Julia set of a disjoint type function of finite order; see \cite[Theorem 1.5]{MR2902745}.
\begin{theorem}
\label{theo:cantorbouquet}
If $f$ is a {\tef} of disjoint type and finite order, then $J(f)$ is a Cantor bouquet.
\end{theorem}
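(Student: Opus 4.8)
The plan is to move to the logarithmic transform, use the head‑start machinery from the proof of Theorem~\ref{theo:finiteorder} to recognise each $J_{\underline{s}}(F)$ as a ``hair'', assemble these hairs into one parametrised family, verify the defining properties of a straight brush, and finally realise the resulting model by an ambient homeomorphism of the plane.

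First I would fix a logarithmic transform $F\in\Blog$ of $f$. Since $f$ is of disjoint type and finite order, so is $F$, and by Theorem~\ref{theo:disjointFatou} we have $J(f)=\exp J(F)$, with $\exp$ mapping each Julia continuum of $F$ homeomorphically onto one of $f$. As recalled in the proof of Theorem~\ref{theo:finiteorder}, finite order forces the tracts of $F$ to have bounded slope and uniformly bounded wiggling, so $F$ satisfies a uniform linear head‑start condition; consequently, for every admissible address $\underline{s}$ the ordered space $(\hat{J}_{\underline{s}}(F),\succ)$ is homeomorphic to a compact interval with $\infty$ as one of its two endpoints (consistently with Theorem~\ref{theo:lassearclike}, which already gives that $\infty$ is a terminal point and that the continuum is arc‑like under bounded slope). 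Thus each $J_{\underline{s}}(F)=\hat{J}_{\underline{s}}(F)\setminus\{\infty\}$ is an arc homeomorphic to $[0,\infty)$ whose closure in the sphere adds only $\infty$; its $\succ$‑minimal point is a distinguished endpoint, and every other point of the arc escapes, using that $z_1\succ z_2$ with $z_1\neq\infty$ forces $z_1\in I(F)$. Since distinct admissible addresses give disjoint sets and every point of $J(F)$ has a unique external address, $J(F)$ is the disjoint union of these hairs.

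Next I would build a global parametrisation. Using the $2\pi i$‑periodicity of $F$, the (countably many) tracts carry a natural linear order by vertical position, and this induces a linear order on the space $\Sigma$ of admissible external addresses that is compatible with the product topology and with the vertical location of the corresponding hairs; as $\Sigma$ is separable, metrisable and zero‑dimensional, after a choice of coordinate it embeds into $\R\setminus\Q$. Combining this order with the order $\succ$ along each hair, one constructs a map $\Psi\colon\Sigma\times[0,\infty)\to J(F)$ with $\Psi(\underline{s},\cdot)$ a homeomorphism onto $J_{\underline{s}}(F)$ carrying $0$ to the endpoint, with $\Psi(\underline{s},t)\to\infty$ as $t\to\infty$ uniformly in $\underline{s}$, and with $\Psi$ injective. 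The substance here is the continuity and properness of $\Psi$: one must show that hairs whose addresses agree on a long initial segment stay uniformly close and have nearby endpoints. This is where the uniform hyperbolic expansion of Lemma~\ref{lemm:unifexpansion} combines with the head‑start condition: agreeing prefixes force the corresponding pieces of orbit into the same tracts for a long time, and pulling a bounded‑diameter discrepancy back through the uniformly contracting inverse branches yields convergence of the hairs, and in particular of the endpoints.

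It then remains to check, after transporting everything to the dynamical plane via $\exp$, that the image of $\Psi$ is a straight brush up to ambient homeomorphism. Property (b) of the definition of a straight brush is the hair structure established above; property (a) follows because $\Sigma$ is zero‑dimensional with no isolated points and dense coordinate values; and property (c), that each point of a hair is approximated from both sides by hairs whose endpoints also converge, follows from the continuity of $\Psi$ applied to admissible addresses agreeing with $\underline{s}$ up to position $N$ but lying immediately above and below it in the order. Finally, to promote the homeomorphism onto this abstract model to an \emph{ambient} homeomorphism of $\C$ one invokes the theorem of Aarts and Oversteegen that any planar set satisfying these conditions is ambiently homeomorphic to a straight brush; equivalently, one extends the straightening of the tracts of $F$ to a homeomorphism of the plane, compatible with $\exp$, carrying $J(F)$ onto the model brush. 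I expect the main obstacles to be exactly the two‑sided approximation of property (c) — this is the continuity of the endpoint map $\underline{s}\mapsto\Psi(\underline{s},0)$, and is precisely where finite order (uniformly bounded wiggling) is indispensable, since otherwise the endpoints may fail to converge, as in the construction behind Theorem~\ref{theo:lassearclikeexample} — and the ambient upgrade, which requires either the Aarts--Oversteegen uniqueness theorem together with a careful verification of its hypotheses (no isolated hairs, and the complement $\C\setminus J(f)$ correctly arranged) or an explicit plane homeomorphism straightening the wiggling tracts while respecting $\C\setminus J(f)$.
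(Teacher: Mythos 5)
Your proposal follows the paper's approach: lift to the logarithmic transform, recognise the hairs via the head-start condition from the proof of Theorem~\ref{theo:finiteorder}, assemble them into a brush, and verify the three defining properties. You also rightly flag the need for an ambient upgrade via the Aarts--Oversteegen characterisation, which the survey's sketch glosses over but which is indeed how this is handled in the underlying literature, and your verification of property (c) via continuity of $\Psi$ and of the endpoint map is an equivalent formulation of the paper's more concrete construction of the points $z_n^\pm = \phi_n(F^n(z)\pm 2\pi i)$.

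There is, however, a genuine gap in your argument for property (a), the density of the hair-positions in $\R\setminus\Q$. You justify this by saying that $\Sigma$ is zero-dimensional with no isolated points and has ``dense coordinate values,'' but that last phrase is precisely the conclusion you need to establish, and zero-dimensionality without isolated points does not by itself give density in $\R\setminus\Q$ under any natural coordinate on external addresses. What is actually needed is that the set of \emph{admissible} addresses is dense in the set of \emph{all} addresses over the (countable) alphabet of tracts. The paper gets this from Lemma~\ref{lemm:boundedorbits}: periodic addresses are dense in the shift space, they are bounded, and by that lemma every bounded address is admissible. Your proposal never invokes Lemma~\ref{lemm:boundedorbits}, so nothing in it rules out the admissible addresses from forming, say, a nowhere dense subset, which would leave ``gaps'' in the bouquet. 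Once this density step is supplied, the rest of your argument matches the paper's in substance.
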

\begin{proof}
Roughly speaking, the proof of Theorem~\ref{theo:finiteorder} gives rise to the hairs in part (b) of the definition of a straight brush. It remains to show parts \eqref{brush1} and \eqref{brush3}. Part \eqref{brush1} is shown by proving that the set of admissible external addresses is dense in the set of external addresses. In fact, this is almost an immediate corollary of Lemma~\ref{lemm:boundedorbits}, since the set of periodic external addresses is dense in the set of external addresses, and any periodic external address is clearly bounded and hence admissible.

To prove \eqref{brush3}, suppose that $z \in J(F)$ has external address $\underline{s}$. For each $n \in \N$, we let $\phi_n$ denote the branch of $F^{-n}$ that maps $F^n(z)$ to $z$, and we set $z_n^{\pm} = \phi_n(F^n(z) \pm 2 \pi i)$. In other words, $z^+_n$ (resp. $z^-_n$) has the same first $n$ symbols in its external address as $z$, but the next symbol is shifted ``up'' (resp. ``down'') $2 \pi i$. It can then be shown, using the uniform contraction of the inverse we have used before, that the hairs containing $z^+_n$ (resp. $z^-_n$) accumulate on the hair containing $z$ from above (resp. below), as $n \rightarrow\infty$, and then that this is sufficient to prove \eqref{brush3}.
\end{proof}

We give an illustration of the power of Theorem~\ref{theo:cantorbouquet}. For functions $$f(z) = \lambda e^z, \qfor \lambda \in (0, 1/e),$$ referred to earlier, Mayer \cite{MR1053806} studied the set $E(f)$ of endpoints of the rays in the Julia set. He showed the memorable result that $E(f) \cup \{\infty\}$ is connected, but $E(f)$ itself is totally separated; a set $X \subset \C$ is totally separated if for points $a, b \in X$ there exists a relatively open and closed set $U \subset X$ such that $a \in U$ and $b \notin U$. In other words, $\infty$ is an \emph{explosion point} for the set $E(f) \cup \{\infty\}$.

Note that the functions studied by Mayer are all of disjoint type and finite order. With the definition of a Cantor bouquet given above, the union of a Cantor bouquet with infinity can be shown to be a topological object called a \emph{Lelek fan}; this object was first defined in \cite{MR0133806} and this result is \cite[Theorem 2.8]{MR3620879}. It is a topological result (see, for example, \cite{MR1002079}) that infinity is an explosion point for the set of endpoints of a Lelek fan together with infinity. In other words, the following generalisation of Mayer's result is an immediate, purely topological, corollary of Theorem~\ref{theo:cantorbouquet}.
\begin{corollary}
If a {\tef} $f$ is of disjoint type and finite order, then infinity is an explosion point of $E(f) \cup \{\infty\}$.
\end{corollary}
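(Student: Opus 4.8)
The plan is to deduce this statement purely topologically from Theorem~\ref{theo:cantorbouquet}, without any further dynamical input. First, since $f$ is of disjoint type and finite order, Theorem~\ref{theo:cantorbouquet} provides a homeomorphism $\phi : \C \to \C$ carrying $J(f)$ onto a straight brush $B \subset [0,+\infty) \times (\R \setminus \Q)$. Under this identification the hairs of $J(f)$ correspond to the horizontal rays $[t_y,+\infty)\times\{y\}$ of $B$, and the dynamically defined set $E(f)$ of endpoints of the hairs corresponds exactly to the set $\{(t_y,y) : y \in H\}$ of endpoints of $B$. Since $\phi$ is a homeomorphism of $\R^2$, it is proper and hence extends to a homeomorphism of the Riemann sphere fixing $\infty$; thus it suffices to prove that the apex $\infty$ is an explosion point for the set of endpoints of $B$ together with $\infty$.

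Next, I would invoke the (cited) topological identification: the one-point compactification of a Cantor bouquet — equivalently, of a straight brush — obtained by adjoining $\infty$ is a Lelek fan; this is \cite[Theorem 2.8]{MR3620879}. Under this identification the point $\infty$ plays the role of the apex of the fan, the hairs become the legs emanating from it, and the endpoints $(t_y,y)$ of the brush become precisely the endpoints of the Lelek fan (other than the apex). Transporting back through $\phi$, the set $E(f) \cup \{\infty\}$ is homeomorphic to the set of endpoints of a Lelek fan together with its apex.

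Finally, I would quote the purely topological theorem (see, e.g., \cite{MR1002079}) that for a Lelek fan the union of its set of endpoints with the apex is connected, while the set of endpoints alone is totally separated; that is, the apex is an explosion point of that set. Transferring through the homeomorphism yields that $E(f) \cup \{\infty\}$ is connected whereas $E(f)$ is totally separated, which is exactly the assertion that $\infty$ is an explosion point of $E(f) \cup \{\infty\}$, and the corollary follows.

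The only step requiring genuine care — and the one I expect to be the main (if modest) obstacle — is the bookkeeping in the first paragraph: one must verify that the ambient homeomorphism furnished by Theorem~\ref{theo:cantorbouquet} carries the dynamically defined set $E(f)$ precisely onto the set of endpoints $(t_y,y)$ of the straight brush (and hence onto the topological endpoints of the Lelek fan). Once the definitions are lined up, everything else is a direct appeal to the quoted topological results; it is worth emphasising that ``explosion point'' is an intrinsic property of the space $E(f)\cup\{\infty\}$, so the existence of the homeomorphism alone is enough to transfer it.
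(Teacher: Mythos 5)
Your argument is the same as the paper's: pass from Theorem~\ref{theo:cantorbouquet} to the Cantor bouquet structure, identify $J(f)\cup\{\infty\}$ as a Lelek fan via \cite[Theorem 2.8]{MR3620879}, and invoke the purely topological fact from \cite{MR1002079} that the apex is an explosion point for the endpoint set of a Lelek fan. The extra care you take in checking that the dynamical endpoints line up with the brush endpoints and that the ambient homeomorphism extends over $\infty$ is correct bookkeeping but does not change the route.
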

We note all Julia set points which are not escaping must be endpoints \cite[Theorem 4.7]{MR2753600}. For more discussion on endpoints, and their topological properties, see \cite{MR3620879, VassoDave}. \\

Suppose that $f \in \B$ is of finite order and disjoint type, and let $F$ be a logarithmic transform of $f$. We know that $J(f) = \exp J(F)$ and, in a sense, the Cantor bouquet in Theorem~\ref{theo:cantorbouquet} arises as a consequence of this. It is natural to ask the following. Suppose that we now relax the condition that $f$ is of disjoint type. By considering the set $\exp J(F)$ does a Cantor bouquet arise as a subset of $J(f)$? In fact this is true, as shown in the following result \cite[Theorem 1.6]{MR2902745}, the proof of which is omitted.
\begin{theorem}
If a {\tef} $f \in \B$ is of finite order, then there is a Cantor bouquet $X \subset J(f)$ such that $f(X) \subset X$.
\end{theorem}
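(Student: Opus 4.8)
The plan is to reduce to the logarithmic transform and then to \emph{carve out} of its Julia set a forward-invariant subset that is a straight brush, by re-running the finite-order arguments behind Theorems~\ref{theo:finiteorder} and~\ref{theo:cantorbouquet} on a region where an expansion estimate is available \emph{without} the disjoint-type hypothesis. So let $F : \mathcal{T} \to H$ be a normalised logarithmic transform of $f$. Since $f$ has finite order, so does $F$; hence, exactly as in the first stage of the proof of Theorem~\ref{theo:finiteorder}, the tracts of $F$ have bounded slope and uniformly bounded wiggling, and so $F$ satisfies a uniform linear head-start condition. Recall also that, for a normalised transform, $\exp J(F) \subset J(f)$, and that $J(F)$ is forward invariant under $F$. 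Thus it suffices to exhibit an $F$-invariant straight brush $B \subset J(F)$: its image $X := \exp B$ then satisfies $X \subset J(f)$ and $f(X) = \exp F(B) \subset \exp B = X$, and, exactly as in the proof of Theorem~\ref{theo:cantorbouquet}, the straight-brush structure passes under $\exp$ to show that $X$ is a Cantor bouquet.

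The only place where the proof of Theorem~\ref{theo:cantorbouquet} used disjoint type was in invoking the uniform hyperbolic expansion of Lemma~\ref{lemm:unifexpansion}; here its role is played by Lemma~\ref{lemm:expansion}, which gives expansion of $F$ wherever $\operatorname{Re} F$ is large. Accordingly, I would fix $R > 0$ so large that $\{ \operatorname{Re} w > R \} \subset H$ and $|F'(z)| \geq 2$ whenever $z \in \overline{\mathcal{T}}$ and $\operatorname{Re} F(z) > R$, and set
\[
B := \{ z \in \C : F^n(z) \in \overline{\mathcal{T}} \ \text{ and } \ \operatorname{Re} F^n(z) \geq R, \ \text{ for all } n \geq 0 \}.
\]
This set is manifestly forward invariant under $F$, and $F$ is uniformly expanding on it. One then argues as in the disjoint-type case, with this restricted expansion replacing Lemma~\ref{lemm:unifexpansion}. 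First, a pullback argument of the kind used to prove Lemma~\ref{lemm:boundedorbits} shows that for every external address $\underline{s}$ all of whose entries are ``sufficiently far to the right'' there is a hair $\hat{J}_{\underline{s}} \subset B$ with that address; the set of such addresses is dense, which yields axiom~(a) of a straight brush. Next, the head-start condition induces, just as in the third stage of the proof of Theorem~\ref{theo:finiteorder}, a total order on each $\hat{J}_{\underline{s}}$ whose order topology agrees with the metric topology, so that $\hat{J}_{\underline{s}}$ is an arc from a finite endpoint to infinity; this is axiom~(b). Distinct such hairs are disjoint and share no finite endpoint, since a common point would have a single bounded orbit compatible with two different tract-itineraries, contradicting uniform expansion in the manner of Lemma~\ref{lemm:bounded}. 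Finally, the accumulation axiom~(c) is obtained exactly as in the proof of Theorem~\ref{theo:cantorbouquet}: shifting the $n$-th entry of $\underline{s}$ up, respectively down, by $2\pi i$ and using the uniform contraction of the inverse branches shows that the corresponding hairs accumulate, from above and from below, on $\hat{J}_{\underline{s}}$ and its endpoint.

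The main obstacle is the bookkeeping in the previous paragraph: the parameter $R$, and the precise constraint specifying which addresses (hence which points of $J(F)$) are kept, must be chosen so that the resulting family of hairs is simultaneously dense, closed under the single-entry $\pm 2\pi i$ perturbations needed for axiom~(c), and such that $B$ is genuinely forward invariant --- so that $B$ is a bona fide straight brush rather than a merely brush-like set, or an invariant set riddled with ``holes''. (It is here that bounded wiggling is used a second time, to guarantee that the far-right portion of each tract stays to the right, so that such hairs exist at all.) Some restriction of this type is unavoidable, since without the disjoint-type hypothesis $J(F)$ itself is in general only a ``pinched'' brush, in which distinct hairs may share endpoints; the substance of the argument is that the expansion furnished by Lemma~\ref{lemm:expansion} on $\{ \operatorname{Re} F \geq R \}$ is already enough to force the retained set to behave exactly like the Julia set of a disjoint-type function of finite order.
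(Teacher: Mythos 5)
The paper does not give a proof here --- it cites \cite[Theorem 1.6]{MR2902745} and states explicitly that ``the proof of which is omitted'' --- so there is no in-paper argument to compare against. Your overall strategy (pass to the logarithmic transform, carve out a forward-invariant absorbing set where Lemma~\ref{lemm:expansion} supplies uniform expansion, verify the three straight-brush axioms on it using the head-start condition, then push forward by $\exp$) is the right one and matches the spirit of the proof in the cited reference. The reduction $f(X)=\exp F(B)\subset\exp B=X$ and the use of $\exp J(F)\subset J(f)$ for a normalised transform are correctly handled.

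The one place where there is a genuine, rather than merely cosmetic, gap is in the assertion that $B\cap\hat{J}_{\underline{s}}$ is a single ray for each kept address, i.e.\ axiom~(b). The set
\[
B=\bigl\{z:\ F^n(z)\in\overline{\mathcal T}\ \text{and}\ \operatorname{Re}F^n(z)\ge R\ \text{for all }n\ge0\bigr\}
\]
need not meet a hair in a connected set, because the head-start order $\succ$ gives only \emph{eventual} domination of orbits: if $z_1\prec z_2$ and $z_1\in B$, the head-start at time $k$ gives $\operatorname{Re}F^n(z_2)>KR+M\ge R$ for $n\ge k$, but says nothing about $n<k$, so $z_2$ could dip below $R$ at an early iterate and drop out of $B$. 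Likewise, $\operatorname{Re}$ need not be monotone along the hair, so even $\{\,z\in\hat J_{\underline{s}}:\operatorname{Re}z\ge R\,\}$ is a priori a union of closed subarcs rather than one ray, and the nested-pullback description of $B$ is then an intersection of sets that are not obviously continua. This is precisely the point where the bounded-wiggling estimate must be invoked quantitatively --- not only to ensure that ``the far-right portion of each tract stays to the right'' (as you say), but to show that any dip of $\operatorname{Re}F^n(z)$ below $R$ is bounded in terms of the supremum of $\operatorname{Re}F^n$ along the hair beyond $z$, so that after enlarging $R$ one may discard the finitely many offending subarcs and obtain a genuine ray; one must then re-check forward invariance of the resulting set. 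Alternatively, one can sidestep the issue entirely by working with the disjoint-type map $g=\lambda f$ of Lemma~\ref{lemm:smallerfun}: $J(g)$ is already a Cantor bouquet by Theorem~\ref{theo:cantorbouquet}, and the rigidity of escaping dynamics (the general form of the map $\vartheta$ underlying Theorem~\ref{theo:conjugacy}, which is valid on a forward-invariant escaping subset for \emph{arbitrary} $f\in\B$, not just hyperbolic $f$) transports the brush structure into $J(f)$; this is closer to how \cite{MR2902745} actually proceeds and avoids re-deriving the brush axioms from scratch. Your acknowledgment of the bookkeeping obstacle is honest, but the connectivity of the retained hairs is the substantive issue it conceals, and it deserves an explicit argument rather than a parenthetical remark.
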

\subsection{The dimension of the Julia set of a finite order function}
We recall Theorem~\ref{theo:dimension}, which states that the Julia set of a class $\B$ function has Hausdorff dimension greater than $1$. In the case of a function in this class of finite order, a much stronger result holds.
\begin{theorem}
\label{theo:equalstwo}
If $f \in \B$ is of finite order, then $\dim_H J(f) = 2$.
\end{theorem}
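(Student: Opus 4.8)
The plan is to strengthen the mechanism used in the proof of Theorem~\ref{theo:dimension}. As there, I would first pass to a logarithmic transform $F \in \Blog$ of $f$ -- which again has finite order -- fix a tract $T$ of $F$, and work with the inverse branches of the iterates of $F$ inside $T$ and its $2\pi i$ translates. The difference is that, rather than extracting a single conformal iterated function system whose limit set merely has dimension $>1$, I would construct, for each $\epsilon>0$, a subset $X_\epsilon$ of (a set of the same type as) such a limit set with $\dim_H X_\epsilon \geq 2-\epsilon$, and then transfer it to $J(f)$ exactly as in the last two paragraphs of the proof of Theorem~\ref{theo:dimension}: the orbits of points of $X_\epsilon$ stay in $\overline{\mathcal{T}}$ and have $|(F^n)'|\to\infty$, so $\exp X_\epsilon \subset J(f)$, and $\exp$ is locally bi-Lipschitz away from the origin and hence preserves Hausdorff dimension. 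Since $\dim_H J(f)\leq 2$ trivially, letting $\epsilon\to 0$ then gives $\dim_H J(f)=2$.

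The analytic engine is a multi-scale covering criterion of McMullen. Suppose $X=\bigcap_{k\geq 0}G_k$, where $G_0\supset G_1\supset\cdots$, each $G_k$ is a finite union of disjoint compact ``pieces'' of bounded eccentricity, and each piece of $G_k$ is contained in a piece of $G_{k-1}$. Let $d_k\in(0,1]$ be a lower bound for the ratio of Lebesgue measures $|G_k\cap P|/|P|$ over all pieces $P$ of $G_{k-1}$, and let $\Delta_k\to 0$ be an upper bound for the diameters of the pieces of $G_k$. Then
\[
\dim_H X \;\geq\; 2-\limsup_{k\to\infty}\frac{\log\bigl(1/(d_1 d_2\cdots d_k)\bigr)}{\log(1/\Delta_k)}.
\]
Thus it suffices to build such a nested covering, with pieces that are pullbacks of squares in $\overline{\mathcal{T}}$ (or their $\exp$-images in the original plane), for which the density losses $d_k$ decay slowly enough relative to the scales $\Delta_k$ that the $\limsup$ above is at most $\epsilon$.

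This is where the finite-order hypothesis enters, and it is the crux. Writing the condition as $\log\operatorname{Re}F(z)=O(\operatorname{Re}z)$ in $\mathcal{T}$ one obtains $\operatorname{Re}F(z)\leq\exp(c\operatorname{Re}z)$ for $\operatorname{Re}z$ large; combined with the expansion estimate \eqref{eq.expansion}, bounded distortion of the branches $F_T^{-1}$ (Koebe), and the consequent facts that the tracts have bounded slope and uniformly bounded wiggling (as in the proof of Theorem~\ref{theo:finiteorder}), one can control, for a suitably placed square $Q$ whose centre has real part comparable to its side length, how the components of the preimages of $Q$ under iterates of $F$ are distributed inside $Q$: their number, their common size, and their locations. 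The finite-order growth bound is precisely what prevents a tract from being so ``thin'' or so ``wiggly'' that these preimage pieces could only fill a vanishing fraction of $Q$; it lets one choose a very rapidly decreasing sequence of scales $\Delta_k$ while keeping each $d_k$ bounded below (say by a fixed positive constant, after passing to a subsequence of scales), so that $\sum_{j\leq k}\log(1/d_j)$ grows at most linearly in $k$ while $\log(1/\Delta_k)$ grows super-geometrically, forcing the $\limsup$ to vanish. This step genuinely fails for arbitrary $f\in\B$: Stallard showed that within $\B$ the dimension $\dim_H J(f)$ can take any value in $(1,2]$, so finite order cannot be dropped. (In the disjoint-type subcase one can instead invoke Theorem~\ref{theo:cantorbouquet}: $J(f)$ is a Cantor bouquet, and by the work of Bara\'nski and of Bergweiler, Karpi\'nska and Stallard its set of endpoints already has Hausdorff dimension $2$; but to obtain the statement for all finite-order $f\in\B$ it is cleaner to argue directly with the transform, as above.)

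The hardest part is thus the geometric and combinatorial bookkeeping in the previous paragraph: extracting from the finite-order bound alone a quantitative estimate on the number and size of the preimage squares inside $Q$, choosing the auxiliary sequence of scales and the corresponding numbers of iterates, and maintaining uniform distortion control through arbitrarily many levels of the construction so that McMullen's criterion can be applied to the limit. Everything else -- the passage to the logarithmic transform, the assembly of the pieces into a nested covering, the verification that points of the limit set do not escape, and the transfer back to $J(f)$ via $\exp$ -- is routine given the tools already developed above.
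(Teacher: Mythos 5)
The paper itself omits the proof of this theorem, remarking only that the technique is ``somewhat similar to that of Theorem~\ref{theo:dimension}'' and citing Bara\'nski and Schubert. Your outline -- pass to the logarithmic transform, build a nested multi-scale covering by pullbacks of squares, apply McMullen's density criterion, and use the finite-order growth bound (together with bounded slope/wiggling and Koebe distortion) to keep the densities $d_k$ from degenerating relative to the scales $\Delta_k$, then transfer via the locally bi-Lipschitz map $\exp$ -- is precisely the strategy of those cited proofs, and your ancillary remarks (Stallard's examples show the finite-order hypothesis is essential; the disjoint-type case can alternatively be routed through the Cantor bouquet) are correct. The hard quantitative bookkeeping that you explicitly flag as the crux is indeed where the real work of Bara\'nski's and Schubert's arguments lies, so as a blind reconstruction of the intended approach this is accurate, if necessarily incomplete.
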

This result is due to Bara\'{n}ski \cite[Theorem A]{MR2464786} and to Schubert \cite[Satz 1.1.1]{schubertthesis}. The technique of the proof is somewhat similar to that of Theorem~\ref{theo:dimension}, and we omit the details. This result was later generalised in \cite{MR2559123}. It is worth noting that the hypothesis that $f$ is of finite order cannot be omitted from Theorem~\ref{theo:equalstwo}. Stallard \cite{MR1760674}, using an explicit construction, showed that for each $d \in (1,2]$ there is a function $f_d \in \B$ such that $\dim_H J(f_d) = d$.
\section{Hyperbolic functions}
\label{s.hyperbolic}
In this section we discuss a class of maps that is much more general than the class of disjoint type maps, but with properties that ensure their dynamical properties are amenable to study. 
\subsection{Definition of hyperbolic functions}
Recall that the definition of disjoint type maps was very intuitive. The definition of hyperbolic functions seems at first to be somewhat less so. To support our definition, we will motivate a definition of hyperbolic {\tef}s using ideas which are familiar in other areas of dynamics. We will then see that, in fact, the only {\tef}s that are hyperbolic are in class $\B$.

It is a general principle in the investigation of dynamical systems that \emph{hyperbolic systems} (sometimes, following Smale \cite{smaledynamics}, known as ``Axiom A'') are the first class to understand: they show the simplest behaviour, yet their study leads to a better understanding in greater generality. For rational maps, including polynomials, there is a well-established definition of hyperbolicity. A rational map $f : \widehat{\C} \to \widehat{\C}$ is said to be \emph{hyperbolic} if one of the following equivalent conditions holds (\cite[Section~9.7]{beardon}, see also \cite[Chapter 19]{MR2193309}):
\begin{enumerate}[(a)]
\item The function $f$ is \emph{expanding} with respect to a suitable conformal metric defined on a neighbourhood of its Julia set. 
\item Every critical value of $f$ belongs to the basin of an attracting periodic cycle.
\item The postsingular set is a subset of the Fatou set.
\end{enumerate}

We highlight the definition of ``expanding'' above; expansion on a neighbourhood of the Julia set is a key factor in determining the dynamics of a hyperbolic map. For a {\tef} it is more difficult to get the definition of ``expanding'' right. The following was proposed in \cite{MR3671560}; although this is more complicated than (a) above, stronger versions of this definition exclude functions we would want to call hyperbolic, and weaker definitions include functions we would not.
\begin{definition}
A {\tef} $f$ is \emph{expanding} if there exist a connected open set $W\subset\C$, which contains $J(f)$, and a conformal metric $\rho=\rho(z)|dz|$ on $W$ such that:
\begin{enumerate}[(1)]
\item $W$ contains a punctured neighbourhood of infinity;
\item $f$ is expanding with respect to the metric $\rho$, i.e. there exists
       $\lambda>1$ such that 
          \[ \|Df(z)\|_{\rho}  \geq \lambda, \qfor z, f(z) \in W; \]
\item the metric $\rho$ is complete at infinity, i.e. $\operatorname{dist}_\rho(z, \infty) = \infty$ whenever $z \in W$.
\end{enumerate}
\end{definition}
We then have the following result \cite[Theorem 1.3]{MR3671560}, which both provides a definition of hyperbolic functions and explains why such functions are only found in the class $\mathcal{B}$. 
\begin{theorem}
Suppose that $f$ is a {\tef}. Then the following are equivalent.
\begin{enumerate}[(a)]
\item $f$ is expanding.\label{hyp3}
\item $f \in \B$, and every point in $S(f)$ lies in the basin of an attracting periodic cycle of $f$.\label{hyp1}
\item $P(f)$ is a compact subset of $F(f)$.\label{hyp2}
\end{enumerate}
A function which satisfies any, and hence all, of these properties is said to be \emph{hyperbolic}.
\end{theorem}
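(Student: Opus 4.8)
The plan is to prove the cycle of implications (c) $\Rightarrow$ (b) $\Rightarrow$ (a) $\Rightarrow$ (c). The step (c) $\Rightarrow$ (b) is of a type already handled in the proof of Theorem~\ref{theo:disjointFatou}, and I would argue in the same way. From $S(f) \subset P(f) \subset F(f)$ we get that $S(f)$ is bounded, so $f \in \B$, and hence by Theorem~\ref{noescaping} that $f$ has no Baker domains; a cycle of parabolic basins, or the boundary of a Siegel disc, would meet $J(f)$ and hence the empty set $P(f) \cap J(f)$; and a singular value whose orbit lies in a wandering domain is impossible, since that orbit lies in the compact set $P(f) \subset F(f)$, so accumulates at a point of some Fatou component $V$, which forces infinitely many iterates into $V$ and hence $V$ to be (pre)periodic. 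So every singular value lies in an attracting basin.

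The implication (b) $\Rightarrow$ (a) is the core of the theorem, and I would construct the metric by hand. Since the finitely many attracting basins that cover the compact set $S(f)$ absorb all forward orbits of singular values, one can build small closed trapping neighbourhoods of the relevant attracting cycles, adjoin the first (uniformly bounded number of) forward images of $S(f)$, thicken slightly, and fill in bounded complementary components, obtaining a \emph{full} compact set $K \subset F(f)$ with $P(f) \subset \operatorname{int} K$ and $f(K) \subset \operatorname{int} K$; here one uses, as in the proof of Theorem~\ref{theo:Juncountable}, that $J(f) \cup \{\infty\}$ is connected, so that $J(f)$ lies in the unbounded complementary component of $P(f)$ and is disjoint from $K$. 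Put $W := \C \setminus K$. This is a connected set containing $J(f)$ and a punctured neighbourhood of $\infty$, and I would take $\rho$ to be the hyperbolic metric $\rho_W$ of $W$, which is complete at $\infty$ since $\infty$ is not an isolated boundary point of $W$. Because $S(f) \subset K$, the map $f : f^{-1}(W) \to W$ is a covering map, and because $f(K) \subset K$ we have $f^{-1}(W) \subsetneq W$; the covering property then gives $\|Df(z)\|_W = \rho_{f^{-1}(W)}(z)/\rho_W(z)$ for every $z$ with $z, f(z) \in W$, which by the comparison principle \eqref{eq.SP} is strictly greater than $1$.

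The main obstacle is promoting this pointwise strict inequality to a \emph{uniform} bound $\|Df(z)\|_W \geq \lambda > 1$; a priori the ratio could tend to $1$. On compact subsets of $W$ this is immediate by continuity, and, because $f(K) \subset \operatorname{int} K$ forces $f(z) \in K$ for $z$ close to $K$, no point $z$ with $z, f(z) \in W$ lies near $K$; so the only problematic regime is near $\infty$. There I would invoke the Eremenko--Lyubich expansion of Lemma~\ref{lemm:expansion}, which is available since $f \in \B$: together with the standard estimate \eqref{eq.hypest} on $\rho_W$, it shows that $\|Df(z)\|_W$ is in fact large, and in particular exceeds a fixed $\lambda > 1$, when $z$ is near $\infty$. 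For the final implication (a) $\Rightarrow$ (c): the expansion hypothesis forbids any critical point $c$ with $c, f(c) \in W$, there are only finitely many critical points in the compact set $\C \setminus W$, and a completeness argument at $\infty$ excludes asymptotic values taken far out in $W$; hence $S(f)$ is bounded, so $f \in \B$, and $S(f) \subset \C\setminus W \subset F(f)$. Completeness of $\rho$ at $\infty$ then prevents the forward orbit of $S(f)$ from escaping to $\infty$, while expansion prevents it from accumulating on $J(f) \subset W$, so $P(f)$ is a compact subset of $F(f)$.
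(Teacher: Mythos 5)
Your proposal is essentially the same argument as the paper's, just with the implication cycle run in the opposite direction: the paper establishes $(b)\Rightarrow(c)\Rightarrow(a)\Rightarrow(b)$, while you establish $(c)\Rightarrow(b)\Rightarrow(a)\Rightarrow(c)$. In particular, where you prove $(c)\Rightarrow(b)$ by appealing to the classification of periodic Fatou components (as in Theorem~\ref{theo:disjointFatou}) plus a direct exclusion of singular orbits entering wandering domains, the paper proves the converse $(b)\Rightarrow(c)$ by the (dual) observation that $(b)$ forces all Fatou components to be attracting basins, whence $P(f)$ is compactly contained in $F(f)$ -- both are standard and interchangeable. For the key construction of the metric, you and the paper do the same thing: build a forward-invariant compact (or bounded open) neighbourhood of $P(f)$ inside $F(f)$, take $W$ to be its complement, and use the hyperbolic metric $\rho_W$, with $f: f^{-1}(W)\to W$ a covering and $f^{-1}(W)\subsetneq W$ giving pointwise expansion. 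The one place you add substance beyond the paper's sketch is the uniformity of the expansion factor: the paper only cites \cite[Lemma 5.1]{MR2570071}, whereas you explicitly split into a compact regime (continuity) and a neighbourhood of $\infty$, invoking the Eremenko--Lyubich estimate of Lemma~\ref{lemm:expansion} together with the density asymptotics of $\rho_W$ near $\infty$ for the latter. This is the right idea, though be aware that $W$ is doubly connected rather than simply connected, so $\rho_W$ is not covered by the simply-connected development of Subsection~\ref{subs:hyperbolic} and the standard estimate \eqref{eq.hypest} must be replaced by the annular asymptotics $\rho_W(z)\asymp 1/(|z|\log|z|)$ near $\infty$. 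Finally, your $(a)\Rightarrow(c)$ step mirrors the paper's $(a)\Rightarrow(b)$: expansion excludes $S(f)$ from $W$, hence $f\in\B$ with $S(f)$ contained in the bounded set $\C\setminus W$, from which the compactness of $P(f)$ inside $F(f)$ follows; the paper reaches $(b)$ instead by observing $J(f)\cap P(f)=\emptyset$, but the two endpoints are equivalent.
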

\begin{proof}
We very briefly sketch a part of the proof of this result. Suppose first that \eqref{hyp1} holds. Exactly as in the proof of Theorem~\ref{theo:disjointFatou}, we can show that the only Fatou components of $f$ are basins of attraction. Part  \eqref{hyp2} can then be deduced from properties of attracting basins.

Next, suppose that \eqref{hyp2} holds. We can create a bounded open neighbourhood, $U$, of $P(f)$, such that $\overline{f(U)} \subset U$. Set $W := \C \setminus \overline{U}$. Property \eqref{hyp3} can then be deduced by considering the properties of the covering map $f : f^{-1}(W) \to W$; see \cite[Lemma 5.1]{MR2570071}.

Finally suppose that \eqref{hyp3} holds. The expanding property on $W$ can be used to show that $W \cap S(f) = \emptyset$, in which case $f \in \B$. It can then be shown that  this fact, together with the expanding property of $f$ on $W$, implies that $J(f) \cap P(f) = \emptyset$. Property \eqref{hyp1} can then be deduced.
\end{proof}

It is easy to see that all disjoint type functions are hyperbolic. An example of a hyperbolic function which is not of disjoint type is the function $f(z) = \frac{\pi}{2} \sin z$. This function has $S(f) = \{ \pm \pi/2\}$. These are both attracting fixed points, and so $f$ is hyperbolic by property \eqref{hyp1}. However, we know from Theorem~\ref{theo:disjointFatou} that this behaviour is impossible for a function of disjoint type.
\subsection{The Fatou set of a hyperbolic function}
As we have seen above, the only Fatou components of a hyperbolic map are attracting basins. However, unlike disjoint type functions, hyperbolic functions may have more than one immediate attracting basin. An example from  \cite{MR3433280} is the map $$f(z) = -z^2 \exp(1-z^2),$$ for which $CV(f) = \{0, -1\}$ and $AV(f) = \{0\}$. The points $0$ and $-1$ are in fact attracting, and can be shown to lie in bounded Fatou components. On the other hand, all preperiodic components of the immediate basin of $0$ are unbounded.

It is natural to ask, then, when Fatou components of a hyperbolic function must be bounded. The following \cite[Theorem 1.2]{MR3433280} gives a complete answer to this question.
\begin{theorem}
Suppose that $f$ is a hyperbolic transcendental entire function. Then every component of $F(f)$ is bounded if and only if $f$ has no finite asymptotic values and every component of $F(f)$ contains at most finitely many critical points. 
\end{theorem}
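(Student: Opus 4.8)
The statement is an equivalence, so I would prove the two implications separately.

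\emph{Necessity.} Suppose every component of $F(f)$ is bounded. The assertion about critical points is then automatic: $f'$ is a non-constant entire function, so its zero set has no finite accumulation point, and hence every bounded domain -- in particular every Fatou component -- contains only finitely many critical points of $f$. For the asymptotic values, suppose for a contradiction that $a \in AV(f)$. Since $f$ is hyperbolic, $a \in S(f) \subseteq P(f) \subseteq F(f)$, so $a$ lies in a Fatou component $U$. Choosing an asymptotic path $\gamma$ with $\gamma(t) \to \infty$ and $f(\gamma(t)) \to a$, we see that $f(\gamma(t)) \in U$ for all large $t$, so a tail of $\gamma$ lies in $f^{-1}(U) \subseteq F(f)$ by complete invariance; the Fatou component containing that unbounded tail is then unbounded, a contradiction. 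Hence $AV(f) = \emptyset$.

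\emph{Sufficiency.} Now assume $AV(f) = \emptyset$ and that every Fatou component contains at most finitely many critical points. Since $f$ is hyperbolic it has no wandering domains and no Baker domains (by Theorem~\ref{noescaping} and the argument used for Theorem~\ref{theo:disjointFatou}), so every Fatou component is (pre)periodic and its forward orbit eventually enters the finite cycle of immediate attracting basins. It therefore suffices to establish: (i) every immediate attracting basin is bounded; and (ii) whenever $U'$ is a Fatou component with $f(U')$ contained in a \emph{bounded} Fatou component $U''$, then $U'$ is bounded. Granting (i) and (ii), an induction on the number of iterates needed to map a Fatou component into the cycle shows that every Fatou component is bounded. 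Step (ii) is the more routine part: since $f \in \B$, every Fatou component -- in particular $U''$ -- is simply connected, the map $f \colon U' \to U''$ is surjective up to at most one point, has only finitely many critical points (whose images lie in $U''$), and, because $AV(f) = \emptyset$, $U''$ contains no asymptotic value of $f$. Combining Iversen's classification of singular values with these facts (and a sheet-counting argument exploiting the simple connectivity of $U''$) one deduces that $f \colon U' \to U''$ is a proper branched covering of finite degree; but a proper holomorphic map onto a bounded domain must have bounded domain of definition, for otherwise $\infty \in \partial U'$ and lifting, from a point of $U'$, a curve in $U''$ tending to $\partial U''$ would produce a curve in $U'$ tending to $\infty$ along which $f$ has a finite limit -- an asymptotic value, contradicting $AV(f) = \emptyset$. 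Hence $U'$ is bounded.

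\emph{The crux.} Step (i) is the core of the argument and where the main obstacle lies: it must genuinely use hyperbolicity, since an immediate basin can be unbounded for hyperbolic maps violating the hypotheses -- for instance $\lambda e^z$ with $0 < \lambda < 1/e$ (where the asymptotic value $0$ is responsible) or $\frac{\pi}{2}\sin z$ (where an immediate basin contains infinitely many critical points). My plan is to write an immediate attracting basin $U_0$, of period $p$, as the increasing union of the pullbacks $A_0 \subseteq A_1 \subseteq \cdots$ of a small invariant disc $A_0$ about the attracting periodic point, where $g(A_{n+1}) = A_n$ for $g := f^p$. By \eqref{compos} one has $AV(g) = \emptyset$, $S(g) \subseteq P(f)$ is compact, and $g$ has only finitely many critical points in $U_0$, so exactly as in step (ii) each $g \colon A_{n+1} \to A_n$ is a proper branched covering of finite degree and, for large $n$, its degree is constant. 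The remaining -- and hardest -- task is to show that the $A_n$ do not grow without bound, equivalently that $\partial U_0$ cannot reach infinity. For this I would invoke the uniform hyperbolic expansion of $f$ near $J(f)$ (equivalently, pass to a logarithmic transform $F$ of $f$ and argue with Lemma~\ref{lemm:expansion} as in the proof of Theorem~\ref{noescaping}): an unbounded $U_0$ would contain a curve tending to $\infty$ on which $f$ is either bounded -- yielding an asymptotic value, contradicting $AV(f) = \emptyset$ -- or along which the expansion forces a tract of $F$ to contain arbitrarily large discs, which is impossible. It is here that the covering-map structure $f \colon f^{-1}(W) \to W$ (with $P(f) \subset V$ bounded and $W = \C \setminus \overline V$) and the logarithmic transform carry the weight of the proof.
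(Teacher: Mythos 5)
Your necessity direction is correct and complete. For sufficiency, the overall structure (reduce to boundedness of the immediate attracting basins, then pull back inductively) is the right one, and you correctly flag the immediate-basin case as the crux; but both steps have genuine gaps. In step (ii), the assertion that ``a proper holomorphic map onto a bounded domain must have bounded domain of definition'' is false in general -- a Riemann map from a half-plane onto a disc is proper -- so the statement you actually need is specific to restrictions of entire functions, and your curve-lifting argument for it does not establish it: properness only forces the lifted curve to leave every compact subset of $U'$, so it may tend to a \emph{finite} boundary point of $U'$ (whose image is then merely a boundary point of $U''$, no contradiction) rather than to $\infty$. More fundamentally, that $f \colon U' \to U''$ is proper at all is not a consequence of a ``sheet-counting argument''; this is exactly the non-trivial content supplied in the cited reference, where it rests on a lemma about transcendental singularities of $f^{-1}$, roughly: if $V$ is a bounded simply connected domain and $U$ an unbounded component of $f^{-1}(V)$, then either $U$ contains infinitely many critical points or $f$ has an asymptotic value in $\overline{V}$.

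For step (i) the proposed dichotomy is not sound. Boundedness of $f$ on a curve $\gamma\subset U_0$ tending to $\infty$ does \emph{not} yield an asymptotic value -- one needs $f(\gamma(t))$ to \emph{converge}, not merely stay bounded (for instance $\sin$ is bounded on a horizontal strip, where its cluster set at $\infty$ is the whole interval $[-1,1]$, so this produces no asymptotic value) -- and the ``expansion forces a tract to contain arbitrarily large discs'' argument from the proof of Theorem~\ref{noescaping} applies only to orbits that escape while remaining inside $\mathcal{T}$; the orbit of a point of $\gamma$ converges to the attracting cycle and need not stay in any tract of $F$. Note that the paper omits the proof of this theorem, deferring to the cited reference, so there is no in-paper argument to compare against; but as written, the two technical lemmas your proposal rests on (properness from global absence of asymptotic values together with local finiteness of critical points, and boundedness of the invariant immediate basin) are left genuinely unproven.
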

The proof of this result is omitted.
\subsection{The Julia set of a hyperbolic function}
As noted earlier, if $f \in \B$, then the function $g(z) = \lambda f(z)$ is of disjoint type whenever $|\lambda|$ is chosen small enough. In the case that $f$ is hyperbolic, then a result of Rempe-Gillen \cite[Theorem 5.2]{MR2570071} can be used to transfer dynamical properties of $g$ back to $f$.
\begin{theorem}
\label{theo:conjugacy}
Suppose that $f, g$ are as above, and that $|\lambda|$ is sufficiently small. Then there is a continuous surjection 
\[
\vartheta{ϑ} :J(g) \to J(f),
\]
with 
\begin{equation}
\label{commutes}
f \circ \vartheta = \vartheta \circ g.
\end{equation}
Furthermore, $\vartheta{ϑ}: I(g) \to I(f)$ is a homeomorphism.
\end{theorem}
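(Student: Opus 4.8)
The plan is to reduce the statement, via the logarithmic transform, to a rigidity statement for the associated maps $F,G\in\Blog$, and then to exploit the two expansion principles already available to us: $G$ is of disjoint type, hence uniformly expanding by Lemma~\ref{lemm:unifexpansion}, while $F$ is expanding wherever its real part is large by Lemma~\ref{lemm:expansion}. The starting point is that $f$ and $g=\lambda f$ are conformally equivalent near infinity in a trivial way: writing $\varphi(w):=\lambda w$, we have $\varphi\circ f=g$, so $f$ and $g$ differ, near infinity, only by a conformal change of coordinates. I would fix a single Jordan domain $D$ containing $S(f)\cup\{0,f(0)\}$, chosen large enough that $g$ is of disjoint type with respect to $D$ and that $S(g)\cup\{0,g(0)\}\subset D$ (possible when $|\lambda|$ is small, as in the proof of Lemma~\ref{lemm:smallerfun}), and \emph{also} chosen so that $\overline D\subset F(f)$; the latter is possible precisely because $f$ is hyperbolic, so that $P(f)$ is a compact subset of the open set $F(f)$. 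With this choice, no point of $J(f)$ ever has an orbit meeting $D$, so that $J(f)=\exp J(F)$ and, since $I(f)\subset J(f)$ by Theorem~\ref{noescaping}, also $I(f)=\exp I(F)$; the corresponding identities for $g$ hold because $g$ is of disjoint type. Moreover $F$ and $G$ have the same alphabet $\mathcal{A}$ of tract-symbols, each tract of $G$ lying in a unique tract of $F$ in a way that is bijective, so it is meaningful to speak of a point of $J(F)$ and a point of $J(G)$ having ``the same external address''.

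Next I would construct the conjugacy on the escaping sets. Given $z\in I(g)$, write $z=\exp w$ with $w\in I(G)$, so that $\operatorname{Re}G^n(w)\to+\infty$, and let $\underline{s}=s_0s_1\cdots$ be its external address. I want $\vartheta(z):=\exp(w')$, where $w'\in I(F)$ is the unique point with external address $\underline{s}$ whose $F$-orbit \emph{shadows} the $G$-orbit of $w$, in the sense that $F^n(w')$ and $G^n(w)$ remain within bounded hyperbolic distance (in $H$) for all $n$. Existence of such a $w'$ comes from pulling back the orbit of $w$ through the inverse branches $F^{-1}_{s_0},F^{-1}_{s_1},\ldots$: once $n$ is large the real parts of $G^n(w)$, and hence of the pull-back points, are large, so by Lemma~\ref{lemm:expansion} the relevant inverse branches of $F$ are strong contractions and the pull-backs converge. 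Uniqueness, and the fact that the analogous construction with $F$ and $G$ interchanged furnishes a genuine two-sided inverse, follow from the same expansion estimates together with the uniform expansion of $G$ given by Lemma~\ref{lemm:unifexpansion}, exactly in the spirit of the uniqueness arguments in Lemmas~\ref{lemm:bounded} and \ref{lemm:boundedorbits}. By construction $\vartheta$ carries the $\underline{s}$-orbit to the $\underline{s}$-orbit, and $\exp$ intertwines the shift with $f$ and $g$, so $f\circ\vartheta=\vartheta\circ g$ holds on $I(g)$; this is \eqref{commutes}.

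It then remains to upgrade $\vartheta|_{I(g)}:I(g)\to I(f)$ from a bijection to a homeomorphism, and to extend it to all of $J(g)$. Both follow from the quantitative form of the shadowing: the hyperbolic-geometry estimates of Subsection~\ref{subs:hyperbolic}, fed by the uniform expansion of $G$ and the expansion of $F$ at large real part, show that $\vartheta$ and its inverse are uniformly continuous with respect to the spherical metric. Hence $\vartheta|_{I(g)}$ is a homeomorphism onto $I(f)$, and it extends continuously to $\overline{I(g)}=J(g)$ (using Theorem~\ref{noescaping}), yielding a continuous map $\vartheta:J(g)\to\overline{I(f)}=J(f)$; the identity \eqref{commutes} persists by continuity. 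This extension is onto, since its image is closed (after adding the fixed point $\infty$) and contains the dense subset $I(f)$ of $J(f)$. One should not expect $\vartheta$ to be injective on all of $J(g)$ -- for instance $g$, being of disjoint type, has a connected Fatou set, whereas a hyperbolic $f$ may have several attracting basins, and the boundary structure cannot be matched bijectively -- which is why the theorem claims only a semiconjugacy on $J$.

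The main obstacle is the construction of $\vartheta$ on $I(g)$ together with its uniform continuity, in particular the bookkeeping needed to see that the shadowing pull-backs converge \emph{uniformly in the external address} rather than merely pointwise. This is where the two hypotheses genuinely enter: $g$ must be of disjoint type so that Lemma~\ref{lemm:unifexpansion} gives contraction of the $G$-inverse branches everywhere (controlling where the candidate orbit can be), and $f$ must be hyperbolic so that, via $\overline D\subset F(f)$, orbits in $J(f)$ avoid the region where $F$ fails to expand, leaving Lemma~\ref{lemm:expansion} enough room to act along the escaping orbits. Off the escaping set the pull-back construction loses its grip -- $F$ need not expand along a bounded orbit -- which is exactly the reason nothing stronger than surjectivity survives the passage from $I$ to $J$.
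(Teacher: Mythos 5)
Your proposal takes a genuinely different route from the paper's. The paper (following \cite[Theorem 5.2]{MR2570071}) works directly in the dynamical plane: it starts from $\vartheta_0(z)=z$, $\vartheta_1(z)=\lambda z$ on $J(g)$, and iteratively pulls back short line segments joining $\vartheta_{n-1}(g(z))$ to $\vartheta_n(g(z))$ through the appropriate inverse branches of $f$. The engine that makes those pullbacks converge is the expanding conformal metric $\rho$ on the neighbourhood $W\supset J(f)$ that comes from the \emph{definition} of hyperbolicity -- i.e.\ $\|Df\|_\rho\geq\lambda>1$ on $W$ -- so that the inverse branches of $f$ contract uniformly near all of $J(f)$, escaping or not; this is how one obtains the uniform bound $\operatorname{dist}_W(z,\vartheta(z))\leq K$ on all of $J(g)$ in one shot. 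You instead pass to the logarithmic transform and try to match external addresses by a shadowing argument on $I(g)$, and then extend to $J(g)$ by uniform continuity.

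The gap is in the claim that ``$f$ must be hyperbolic so that, via $\overline D\subset F(f)$, orbits in $J(f)$ avoid the region where $F$ fails to expand.'' This conflates two different things. Hyperbolicity gives $P(f)\subset F(f)$, so one may choose $D$ with $\overline D\subset F(f)$ and $J(f)$-orbits avoid $D$; but the region where $F$ fails to expand in the sense of Lemma~\ref{lemm:expansion} is the set where $\operatorname{Re}F$ is not large (equivalently, where $|f|$ is only of moderate size), and that set is far bigger than $\exp^{-1}(D)$. A hyperbolic map that is not of disjoint type typically has plenty of bounded orbits in $J(f)$ -- repelling cycles, for a start -- along which $\operatorname{Re}F^n$ stays bounded and Lemma~\ref{lemm:expansion} gives no expansion at all. (It is only for disjoint-type $F$, where $\overline{\mathcal{T}}\subset H$, that Lemma~\ref{lemm:unifexpansion} upgrades the expansion to hold uniformly on all of $\mathcal{T}$; you rely on that for $G$, correctly, but $F$ has no such property.) Consequently, your modulus of continuity for the shadowing map $\vartheta|_{I(g)}$ is controlled only by how long an orbit dawdles at moderate real part before escaping, and there is no uniform bound on that over $I(g)$; the uniform continuity needed to extend $\vartheta$ from $I(g)$ to $J(g)=\overline{I(g)}$ is therefore not established by the tools you invoke. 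To close the gap one needs precisely the ingredient the paper uses and you omit: the expanding conformal metric $\rho$ on $W$ guaranteed by hyperbolicity, which makes the inverse branches of $f$ contract uniformly near \emph{all} of $J(f)$, not merely along escaping orbits. Once that is in hand the pullback can be done on all of $J(g)$ directly, and the detour through $I(g)$ and a density/extension argument becomes unnecessary.

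Two smaller points. First, your identification $J(f)=\exp J(F)$ for a general hyperbolic $f$ needs more care than stated: you also need $D$ to contain $P(f)$ (so that every Fatou orbit eventually enters $D$), which is possible but is not the same thing as $D\supset S(f)\cup\{0,f(0)\}$ and $\overline D\subset F(f)$. Second, the bookkeeping matching tracts of $F$ with tracts of $G$ is correct but deserves a sentence: with $D$ a disc, $W/\lambda\subset W$, so $g^{-1}(W)\subset f^{-1}(W)$ and each $f$-tract contains exactly one $g$-tract.
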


\begin{figure}
	\includegraphics[width=14cm,height=8cm]{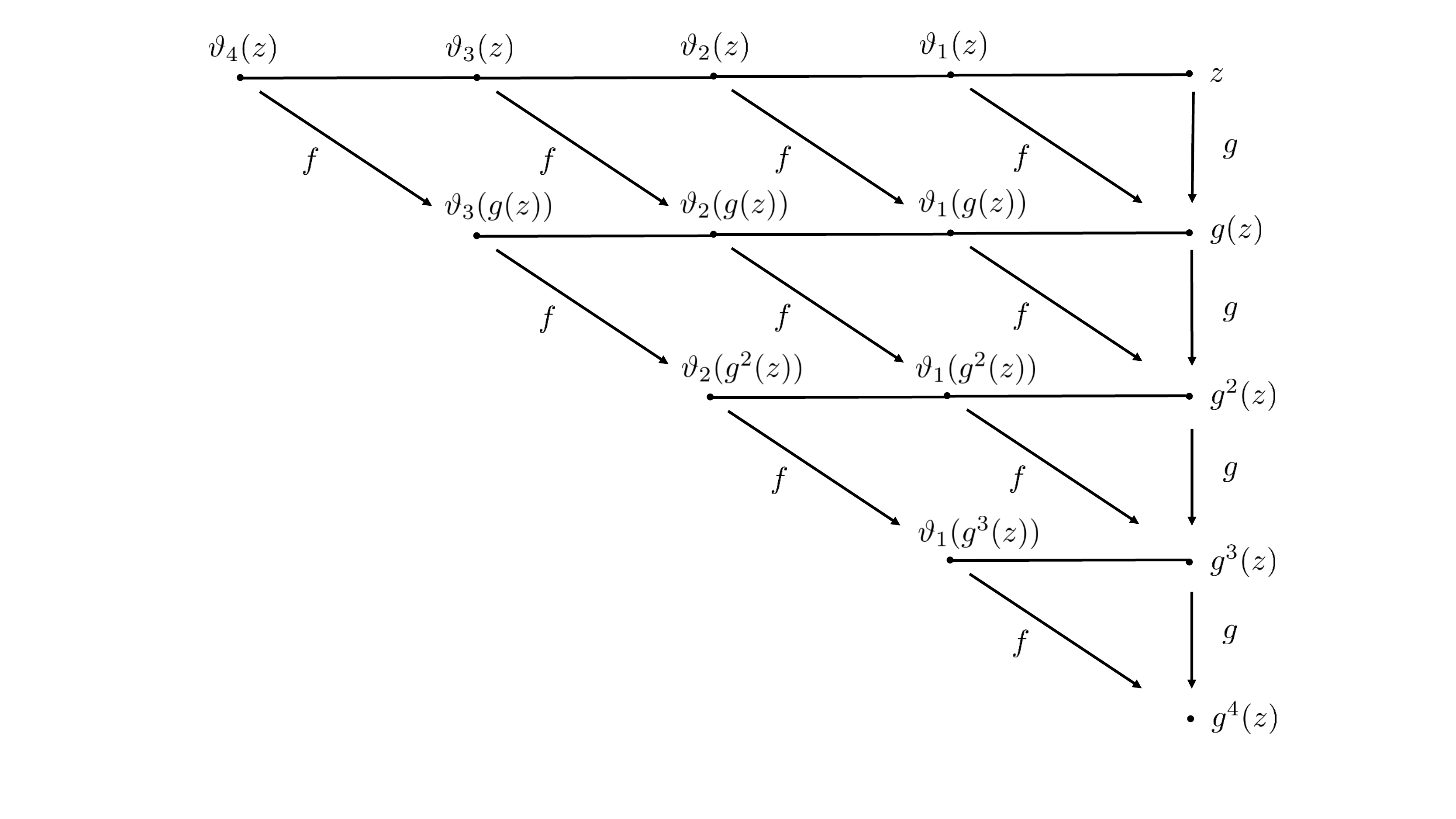}
  \caption{An illustration of the functions in the proof of Theorem~\ref{theo:conjugacy}.}\label{f3}
\end{figure}

\begin{proof}
Let $W \supset J(f)$ be the domain in the definition of a hyperbolic map. We will roughly sketch the approach to constructing the map $\vartheta$; see Figure~\ref{f3}. It is slightly easier to let $g(z) = f(\lambda z)$; since this is conjugate to the form stated above there is no loss of generality in doing this. 

We inductively define a sequence of functions $\vartheta_n(z)$, for $n \geq 0$, such that
\[
f \circ \vartheta_{n+1} = \vartheta_n \circ g, \qfor n \in \N.
\]
We begin by setting $\vartheta_0(z) := z$ and $\vartheta_1(z) := \lambda z$, for $z \in J(g)$. We show how to construct $\vartheta_2(z)$. First we join $\vartheta_0(g(z))$ to $\vartheta_1(g(z))$ with a line segment, $\gamma_1$ say. We then pull back $\gamma_1$ using the inverse branch of $f$ that maps $\vartheta_0(g(z))$ to $\vartheta_1(z)$; by making $|\lambda|$ small at the start of the construction, we can ensure that there is always a neighbourhood of $\gamma$ which does not meet $S(g)$, and so this inverse branch is well-defined and gives a new line segment $\gamma'$. One end of $\gamma'$ is at $\vartheta_1(z)$, and the other end then defines $\vartheta_2(z)$.   

This process can be continued iteratively; for example $\vartheta_3(z)$ is defined by pulling back the line segment from $\vartheta_0(g^2(z))$ to $\vartheta_1(g^2(z))$ using the correct branch of $f^{-2}$. Moreover, because $f$ is expanding on $W$ (and so the inverse is contracting), it can be shown that the maps $\vartheta_n$ in fact converge to a continuous map $\vartheta$ with the property that there exists $K>0$ such that
\begin{equation}
\label{nottoofar}
\operatorname{dist}_W(z, \vartheta(z)) \leq K, \qfor z \in J(g).
\end{equation}

Equation \eqref{commutes} is immediate. Moreover, if $z \in I(g)$, then it follows from \eqref{nottoofar} that 
\[
f^n(\vartheta(z)) = \vartheta(g^n(z)) \rightarrow \infty \text{ as } n \rightarrow \infty.
\]
Hence $\vartheta(z) \in I(f)$, and so $\vartheta(I(g)) = I(f)$. Since the Julia set is the boundary of the escaping set, it also follows that $\vartheta(J(g)) = J(f)$. The other stated properties of $\vartheta$ can then be deduced quickly.
\end{proof}

\begin{figure}
	\includegraphics[width=14cm,height=8cm]{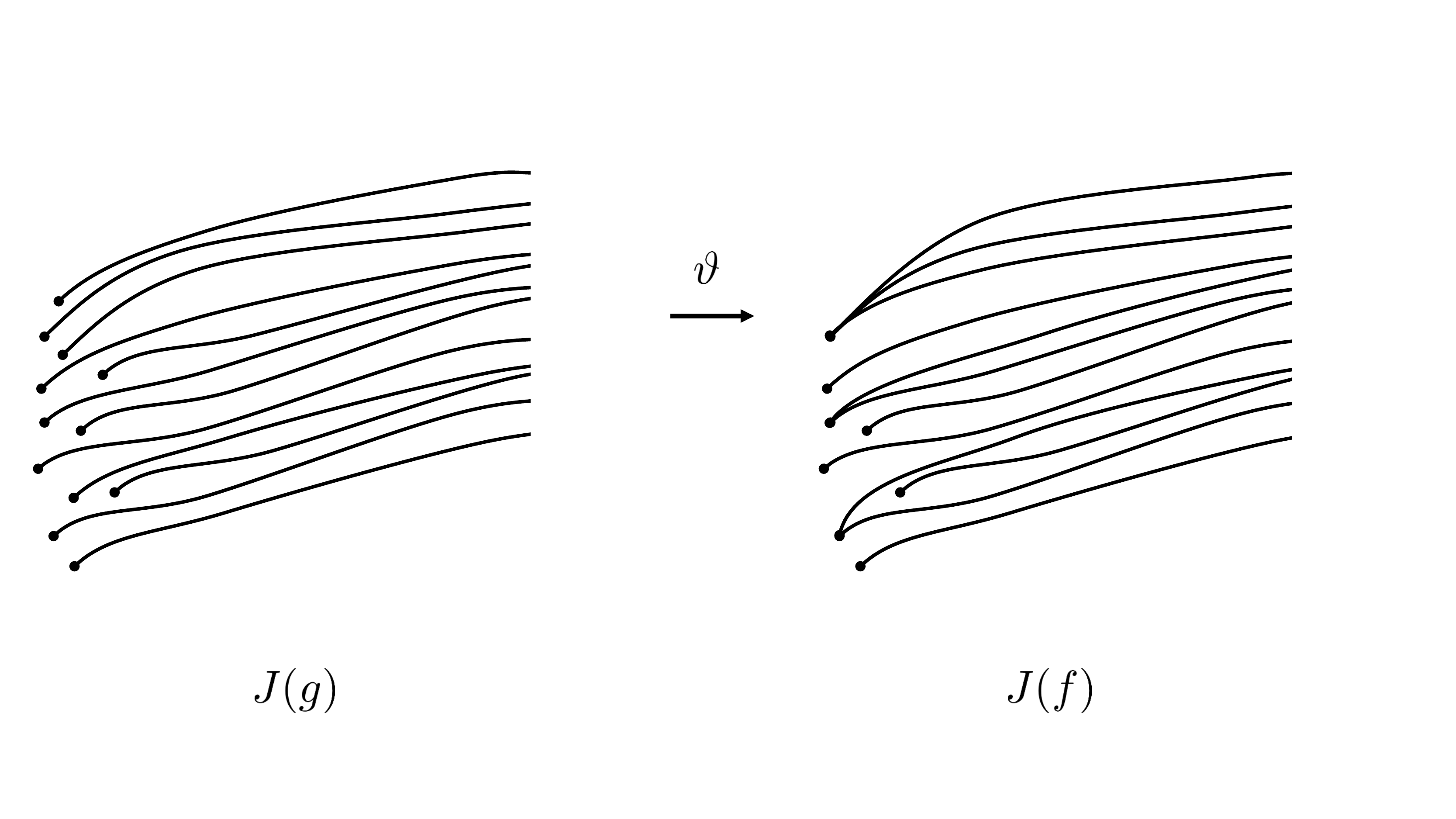}
  \caption{An illustration of the Julia sets in Theorem~\ref{theo:pinchedcantorbouquet}. The function $g$ is of disjoint type and finite order, and so $J(g)$ is a Cantor bouquet. The function $f$ is hyperbolic and of finite order, and the function $\vartheta$ is a surjection from $J(g)$ to $J(f)$ which ``pinches'' some endpoints to form a pinched Cantor bouquet.}\label{figcan}
\end{figure}

In a sense, then, Theorem~\ref{theo:conjugacy} is telling us that the Julia continua of a hyperbolic function are obtained by taking the  Julia continua of a disjoint type function -- discussed earlier -- and then ``pinching'' them together at the points which are not in the escaping set. In particular, we can deduce the following, which characterises the topology of the Julia set of a hyperbolic function of finite order. Here a \emph{pinched Cantor bouquet} is a subset of $\C$ that is ambiently homeomorphic to the quotient of a straight brush by a closed equivalence relation on its endpoints; an equivalence relation on a subset $X$ of $\C$ is \emph{closed} if, for each closed subset $A$ of $X$, the union of the equivalence classes of points in $A$ is closed. See Figure~\ref{figcan}.

\begin{theorem}
\label{theo:pinchedcantorbouquet}
If a {\tef} $f$ is hyperbolic and finite order, then $J(f)$ is a pinched Cantor bouquet.
\end{theorem}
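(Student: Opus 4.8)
The idea is to realise $J(f)$ as a pinched image of the Cantor bouquet of an associated disjoint-type map, by means of the semiconjugacy of Theorem~\ref{theo:conjugacy}.

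First I would pass to a disjoint-type model. A hyperbolic function lies in $\B$, so Lemma~\ref{lemm:smallerfun} provides $\lambda>0$ with $g(z):=\lambda f(z)$ of disjoint type, and, shrinking $\lambda$ if necessary, we may assume $|\lambda|$ is small enough that Theorem~\ref{theo:conjugacy} applies. Scaling an entire function by a non-zero constant preserves its order, so $g$ is again of finite order; hence by Theorem~\ref{theo:cantorbouquet} the set $J(g)$ is a Cantor bouquet, that is, ambiently homeomorphic to a straight brush $B\subset\C$. Theorem~\ref{theo:conjugacy} then gives a continuous surjection $\vartheta:J(g)\to J(f)$ with $f\circ\vartheta=\vartheta\circ g$ that restricts to a homeomorphism $I(g)\to I(f)$.

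Next I would show that $\vartheta$ identifies only endpoints. The key claim is $\vartheta^{-1}(I(f))=I(g)$: the inclusion $\supseteq$ is part of Theorem~\ref{theo:conjugacy}, and for $\subseteq$ one notes that if $\vartheta(x)\in I(f)$ then $\vartheta(g^n(x))=f^n(\vartheta(x))\to\infty$ by \eqref{commutes}, while the bound $\operatorname{dist}_W(z,\vartheta(z))\leq K$ from \eqref{nottoofar}, together with completeness of the metric at infinity, forces $g^n(x)\to\infty$, so $x\in I(g)$. Since $\vartheta$ is injective on $I(g)$, it follows that every fibre of $\vartheta$ meeting $I(g)$ is a singleton, so all non-trivial fibres of $\vartheta$ lie in $J(g)\setminus I(g)$, which consists of endpoints of the bouquet by \cite[Theorem 4.7]{MR2753600}. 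Thus the equivalence relation $\sim$ on $J(g)$ given by $x\sim y\iff\vartheta(x)=\vartheta(y)$ is supported on endpoints, and $\vartheta$ induces a continuous bijection $J(g)/{\sim}\to J(f)$. Extending by $\vartheta(\infty):=\infty$ gives, once more via \eqref{nottoofar} and completeness at infinity, a continuous surjection $J(g)\cup\{\infty\}\to J(f)\cup\{\infty\}$ of compact metric spaces, hence a closed quotient map; transporting $\sim$ through the ambient homeomorphism $J(g)\to B$ then exhibits $J(f)$ as abstractly homeomorphic to the quotient of the straight brush $B$ by a closed equivalence relation on its endpoints, and a short computation with this closed map shows the relation is closed in the required sense.

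The hard part will be the word \emph{ambiently}: it remains to realise this quotient as a planar set and to produce a homeomorphism of $\C$ onto itself carrying it onto $J(f)$, i.e. to verify that the pinching performed by $\vartheta$ is compatible with a planar embedding. This is where the geometry behind the Cantor bouquet must be used in earnest: using the uniform contraction of the inverse branches of $g$ and the bounded-slope and bounded-wiggling structure of its tracts coming from finite order, one should show that endpoints identified by $\vartheta$ can be joined by arcs in $\C\setminus J(g)$ whose images under $\vartheta$ shrink to points, so that the identification is realised by an isotopy of the plane; a Schoenflies-type extension together with standard continuum-theoretic arguments then upgrades the abstract homeomorphism of the previous paragraph to an ambient one. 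I expect this last step, rather than the soft arguments above, to be the genuine obstacle.
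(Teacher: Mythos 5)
Your proposal follows the same route as the paper: pass to the disjoint-type, finite-order map $g$ via Theorem~\ref{theo:conjugacy}, note that $J(g)$ is a Cantor bouquet whose non-escaping points are endpoints, and realise $J(f)$ as the quotient of $J(g)$ by the fibres of $\vartheta$. The paper's own proof is in fact a three-line sketch ending ``the result then follows'' that also leaves the ambient-homeomorphism step implicit, so your more careful handling of $\vartheta^{-1}(I(f))=I(g)$ and your candour about the remaining planar-embedding step constitute an elaboration of the same argument rather than a divergence from it.
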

\begin{proof}
In fact this result is an almost immediate consequence of our previous results. First we use Theorem~\ref{theo:conjugacy} to obtain a disjoint type finite order function $g$, whose Julia set is related to that of $f$ by the continuous function $\vartheta$. 

We then note from Theorem~\ref{theo:finiteorder} and Theorem~\ref{theo:cantorbouquet} that the Julia set of $g$ is a Cantor bouquet, and that the only points of $J(g)$ which are not also in $I(g)$ are endpoints. The result then follows.
\end{proof}
\section{A brief overview of other subclasses of $\B$}
\label{s.others}
For reasons of brevity, we have only been able to discuss a few of the subclasses of class $\B$ that have been studied.
In fact, there are many other ways to classify the behaviour of functions in class $\B$, generally depending on the properties of the singular and postsingular sets. The following brief list gives some examples.
\begin{enumerate}
\item A function $f \in \B$ is called \emph{subhyperbolic} if $P(f) \cap F(f)$ is compact and $P(f) \cap J(f)$ is finite. (In other words, we relax the definition of hyperbolic functions to allow finitely many points of $P(f)$ to lie in the Julia set.) These functions were studied in \cite{MR2912445}. It is shown there, that -- with the additional assumptions that $J(f) \cap AV(f) = \emptyset$ and that the local degree of $f$ at points of $J(f)$ is uniformly bounded -- then $J(f)$ is a pinched Cantor bouquet, thereby generalising Theorem~\ref{theo:pinchedcantorbouquet}. An example of such a function is $f(z) = \pi \sinh z$. This function also has the property that $J(f) = \C$.
\item A function $f \in \B$ is called \emph{geometrically finite} if $S(f) \cap F(f)$ is compact and $P(f) \cap J(f)$ is finite. (In other words, we relax the definition of a subhyperbolic function to allow $P(f) \cap F(f)$ no longer to be bounded). For these maps, it can be shown that the Fatou set is either empty or consists of finitely many attracting or parabolic basins. Such maps were studied in \cite{MR2650792}.
\item A function $f \in \B$ is called \emph{postsingularly bounded} if $P(f)$ is bounded. It can be readily shown that all geometrically finite maps are postsingularly bounded. These maps were studied in \cite{MR2346947}, where it was shown that if $f \in \B$ is postsingularly bounded, then Eremenko's conjecture holds for $f$. A more recent, and much more detailed, study of this class of maps was given very recently in \cite{AnnaLasse}. The authors prove a generalisation of the famous Douady-Hubbard landing theorem for postsingularly bounded entire functions.
\end{enumerate}

We illustrate the inclusions for the various classes of functions in Figure~\ref{figinc}. All the inclusions are strict; see \cite{HMBthesis} for examples of functions that illustrate this.

\begin{figure}
	\includegraphics[width=14cm,height=9cm]{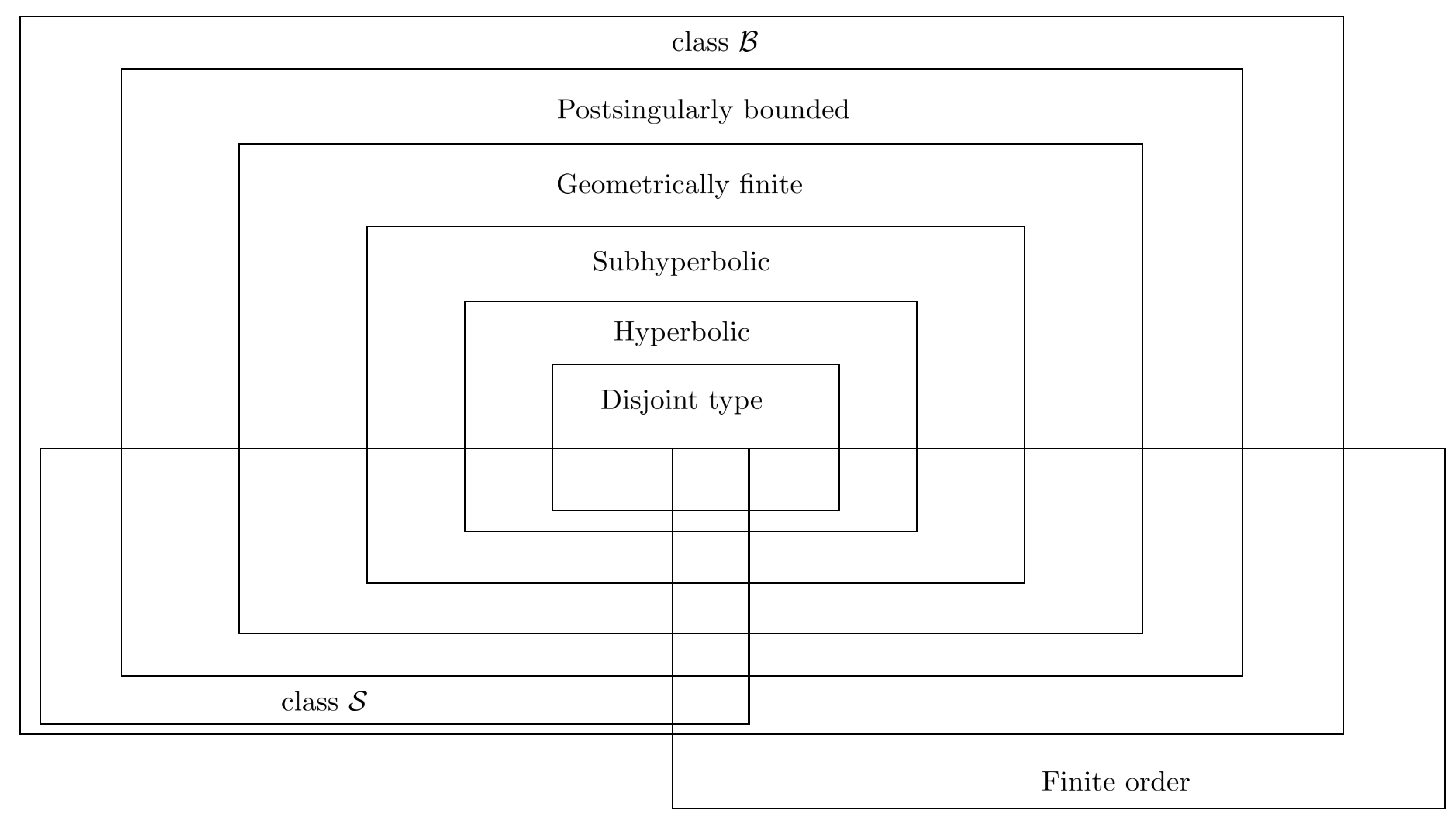}
  \caption{An illustration of relationships between some of the classes of functions discussed.}\label{figinc}
\end{figure}

\section{Constructing functions in class $\mathcal{S}$ and class $\mathcal{B}$}
\label{s.bishop}
\subsection{Techniques for constructing {\tef}s}
Many different techniques have been used to construct {\tef}s with novel dynamical properties. Baker \cite{MR0153842} was the first to use infinite products. It seems, though, that most examples constructed in this way have an unbounded set of critical values, and so lie outside the class $\B$. (An example of a {\tef} in the class $\B$ defined using an infinite product was given in \cite{2016arXiv160804600C}.)

Eremenko and Lyubich \cite{MR918638} pioneered the use of approximation theory. However, in general, it does not seem possible to ensure that the resulting functions lie in the Eremenko-Lyubich class, as this technique gives insufficient control on the set of singular values.

The use of Cauchy integrals to create {\tef}s with novel dynamical properties seems to originate with Stallard \cite{MR1145621, MR1458228}, and also \cite{MR1760674} mentioned earlier. In the first of those papers Stallard studied functions, given in \cite{MR0344042}, that are of the form
\[
E_K(z) = \frac{1}{2\pi i} \int_L \frac{\exp(e^t)}{t - z} \ dt - K, \qfor K > 0,
\]
where the integral is taken around a curve $L$, which is the boundary of the region
\[
\{ x + iy \in \C : x > 0, \ |y| < \pi \}.
\]
The examples constructed by Stallard are all in class $\B$. This technique was also used in \cite{MR2753600} to construct the {\tef} mentioned at the start of Section~\ref{s.finorder} for which the Julia set contains no non-trivial path-connected components. Rempe-Gillen significantly generalised this technique in \cite{MR3214678}.

We also note, in passing, a technique due to MacLane and Vinberg, which can be used to construct {\tef}s with a pre-assigned sequence of real critical values. This technique was used in \cite{MR3433280} to construct hyperbolic functions with certain dynamical properties.

Our main goal in this section, however, is to discuss two powerful and related techniques recently introduced by Bishop. The first is the simplest. The second is more powerful and flexible, but also somewhat more complicated.

Since Bishop makes extensive use of both quasiconformal and quasiregular maps, we first need to discuss these functions, although we will avoid technicalities. Very roughly speaking a quasiconformal map is a homeomorphism that is ``almost'' conformal, in the sense that it maps infinitesimal circles to infinitesimal ellipses, with a uniform bound on the eccentricity of the ellipses. All conformal maps are quasiconformal, but the class of quasiconformal maps is much larger and less rigid. The class of quasiregular maps is obtained from the class of quasiconformal maps by relaxing the requirement of injectivity. Hence quasiregular maps provide, in a sense, a generalisation of the class of analytic maps. Both quasiconformal and quasiregular maps have been used frequently in the study of complex dynamics; see \cite{MR3445628} for more information. Finally, we refer to the monographs \cite{MR1238941} and \cite{MR950174} for a more detailed and precise treatment of quasiconformal and quasiregular maps. 

\subsection{Bishop's ``simpler'' construction.}
The construction discussed in this subsection is from \cite{MR3384512}, which itself was inspired by \cite{MR3214678}. First we define a \emph{model}. Let $\mathbb{H}$ denote the right half-plane $\mathbb{H} := \{ z  : \operatorname{Re}(z) > 0\}$. Suppose that $I$ is an index set which is at most countably infinite, and that $$\Omega := \bigcup_{j \in I} \Omega_j,$$ is a disjoint union of unbounded simply connected domains. Suppose also that, for each $j\in I$, there is a conformal map $\tau_j : \Omega_j \to \mathbb{H}$. Let $\tau$ be the map $\tau : \Omega \to \C$ which is equal to $\tau_j$ on $\Omega_j$. Suppose that the following conditions are satisfied:
\begin{enumerate}[(i)]
\item Sequences of components of $\Omega$ accumulate only at infinity.
\item The boundary of $\Omega_j$ is connected, for $j \in I$.
\item If $(z_n)_{n\in\N}$ is a sequence of points of $\Omega$ such that $\tau(z_n)\rightarrow\infty$ as $n\rightarrow\infty$, then $z_n\rightarrow \infty$ as $n\rightarrow\infty$.
\end{enumerate}
Finally, set $F := \exp \circ \tau$, so that $F$ maps each $\Omega_j$ conformally to $\C\setminus\overline{\D}$. Then the pair $(\Omega, F)$ is called a \emph{model}. 

Roughly speaking \cite[Theorem 1.1]{MR3384512} says the following. If $(\Omega, F)$ is a model, then there exists a function $f\in\mathcal{B}$ which, in a sense that can be made precise, is ``close'' to $F$. This means that we can construct a {\tef} in class $\B$ with certain properties simply by specifying a model, which is essentially a simple geometric object.

Rather than discuss this in further detail, we will instead highlight the following implication of \cite[Theorem 1.1]{MR3384512}. This result, which is \cite[Theorem 2.5]{lassearclike} (see also \cite[Theorem 1.2]{MR3384512}), follows after letting a function $G \in \Blog$ be a model in Bishop's sense.
\begin{theorem}
\label{theo:bish1}
Suppose that $G \in \Blog$ is of disjoint type, and let g be defined by $g(\exp(z)) = \exp(G(z))$. Then there is a disjoint-type function $f \in \B$ and a quasiconformal map $\phi$, which maps a neighbourhood $U$ of $J(f)$ to a neighbourhood of $\exp J(G)$, such that $\phi \circ f = g \circ \phi$ on $U$.
\end{theorem}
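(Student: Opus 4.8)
The plan is to derive Theorem~\ref{theo:bish1} directly from Bishop's construction \cite[Theorem~1.1]{MR3384512}: first repackage $G$ as a \emph{model} in Bishop's sense, then apply that theorem to produce $f$, and finally transfer the disjoint-type property and the conjugacy through the construction. This is the route indicated in the excerpt, where the result is attributed to \cite[Theorem~2.5]{lassearclike} and \cite[Theorem~1.2]{MR3384512}.

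First I would build the model. Normalising as in the discussion of $\Blogn$, assume $H = \H$; disjoint type gives $\overline{\mathcal{T}} \subset H$. Since $G$ is $2\pi i$-periodic and each component of $\mathcal{T}$ is disjoint from its $2\pi i$-translates, $\exp$ is injective on each such component; pick one representative $T_j$ from each $2\pi i$-periodicity class and set $\Omega_j := \exp(T_j)$, so the $\Omega_j$ are disjoint unbounded simply connected domains, and $\Omega := \bigcup_j \Omega_j$. On $\Omega_j$ let $\tau_j := G \circ (\exp|_{T_j})^{-1} : \Omega_j \to \H$, a conformal map, and let $\tau = \tau_j$ on $\Omega_j$. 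Using the $2\pi i$-periodicity of $G$, one checks that $F := \exp \circ \tau$ is well defined on $\Omega$, coincides there with the map $g$ of the statement, and sends each $\Omega_j$ onto $\C \setminus \overline{\D}$ as required of a model. The three model conditions follow from the $\Blog$ axioms \eqref{log1}--\eqref{log6} together with disjoint type: components of $\Omega$ accumulate only at infinity by \eqref{log6}, and $\overline{\mathcal{T}} \subset H$ keeps the $\Omega_j$ at a positive distance from $\overline{\D}$; $\partial\Omega_j$ is connected since each tract is a Jordan domain with boundary homeomorphic to $\R$; and the properness condition follows from the expansion estimate \eqref{eq.expansion} together with the boundedness below of $\operatorname{Re}$ on $\mathcal{T}$. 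Hence $(\Omega, g)$ is a model.

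Next I would apply Bishop's theorem. By \cite[Theorem~1.1]{MR3384512} there is a {\tef} $f \in \B$ and a quasiconformal homeomorphism $\Phi : \C \to \C$, conformal off a bounded neighbourhood $N$ of $\partial\Omega$, with $f \circ \Phi = \Phi \circ g$ on $\Omega \setminus N$; the singular values of $f$ lie in $\Phi(\overline{\D})$ together with the bounded set of critical values created by the quasiconformal folding near $\partial\Omega$, so $f \in \B$. Since $G$ is disjoint type, $J(G) \subset \overline{\mathcal{T}}$, so $J(g) = \exp J(G) \subset \overline{\Omega}$ stays a positive distance from $\overline{\D}$; one can also choose a Jordan domain $D' \supset S(f)$ with $f(\overline{D'}) \subset D'$, so $f$ too is of disjoint type in the sense of Section~\ref{s.disjoint}. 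Arranging $N$ to avoid the forward $g$-orbits of points of $J(g)$, and putting $\phi := \Phi^{-1}$ with $U := \Phi(V)$ for a suitable neighbourhood $V$ of $J(g)$ on which $N$ and its preimages are avoided, we obtain a quasiconformal $\phi$ mapping the neighbourhood $U$ of $J(f)$ onto a neighbourhood of $\exp J(G)$ with $\phi \circ f = g \circ \phi$ on $U$.

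The main obstacle is Bishop's construction itself, which I would quote rather than reprove. It requires carrying out the quasiconformal interpolation between $g$ on the tracts and a simple model map on the complement with \emph{uniformly bounded} dilatation, applying the measurable Riemann mapping theorem to straighten the resulting quasiregular map into a genuine {\tef}, and --- the delicate point if the conjugacy is to hold on a full \emph{neighbourhood} of $J(f)$ and not merely on $J(f)$ --- localising the folding region away from the forward orbits of Julia set points. Granting \cite[Theorem~1.1]{MR3384512}, everything else is routine bookkeeping with the $\Blog$ axioms as indicated above.
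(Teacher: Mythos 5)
Your proposal follows exactly the route the paper itself indicates: the survey presents Theorem~\ref{theo:bish1} as a direct consequence of Bishop's Theorem~1.1 in \cite{MR3384512}, ``after letting a function $G\in\Blog$ be a model in Bishop's sense,'' citing \cite[Theorem~2.5]{lassearclike} for the worked-out derivation and giving no further details itself. Your write-up is a careful expansion of precisely that reduction --- packaging $\exp(\mathcal{T})$ with $\tau = G\circ(\exp|_T)^{-1}$ as a Bishop model, invoking his straightening theorem, and tracking disjoint type and the conjugacy through the quasiconformal map --- so it matches the paper's approach.
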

It follows from the conclusion of Theorem~\ref{theo:bish1} that $J(f) = \phi^{-1}(\exp J(G))$. This means that any property of $J(G)$ that is preserved by quasiconformal maps -- such as being path-connected -- also holds for $J(f)$.

Theorem~\ref{theo:bish1} is used extensively by Rempe-Gillen in \cite{lassearclike}. Since the examples in \cite{lassearclike} are somewhat complicated, we refer to that paper for further details.

In general, the functions obtained from Theorem~\ref{theo:bish1} are not in the class $\S$. Bishop gives a similar, but modified, version of this construction in \cite{Bish4}. The same definition of a model is used, but the function generated is always in the class $\S$. However, this comes with some associated loss of control; for example, $f$ may have additional tracts that are not present in the original model.
\subsection{Bishop's more sophisticated construction.}
\label{subs:bishop}
In \cite{Bish3} Bishop introduced a technique called \emph{quasiconformal folding}.  The technique starts with an infinite connected graph that satisfies certain (not particularly restrictive) geometric conditions. Bishop shows how to combine certain quasiconformal maps on the complementary components of the graph into a map continuous across the graph and quasiregular on the whole plane. The existence of an entire function $f$ with similar properties to the quasiregular map follows through the measurable Riemann mapping theorem.

The key result is \cite[Theorem 7.2]{Bish3}, of which we omit some detail. In this result the complex plane is divided by a graph into domains known as \emph{R-components}, \emph{L-components} and \emph{D-components}, with certain quasiconformal maps defined in each. Subject to certain technical constraints, for which we refer to \cite{Bish3}, these components and quasiconformal maps are as follows:
\begin{enumerate}[(1)]
\item All R-components are unbounded. The quasiconformal map on an R-component is the composition of a quasiconformal map to the right half-plane and another map, which can be taken to be $z \mapsto \cosh(z)$.

\item All L-components are also unbounded, and share boundaries only with R-components. The required quasiconformal map on an L-component is the composition of a quasiconformal map to the left half-plane, the exponential map to $\mathbb{D}\backslash\{0\}$ and (if required) a quasiconformal map from $\mathbb{D}$ to $\mathbb{D}$ taking the origin to another point in $\mathbb{D}$.

\item All D-components are bounded, and share boundaries only with R-components. The required quasiconformal map on a D-component is the composition of a quasiconformal map to $\mathbb{D}$, a power map $z \to z^d$ and (if required) a quasiconformal map from $\mathbb{D}$ to $\mathbb{D}$ taking the origin to another point in $\mathbb{D}$.
\end{enumerate}

\begin{figure}
	\includegraphics[width=12cm,height=7cm]{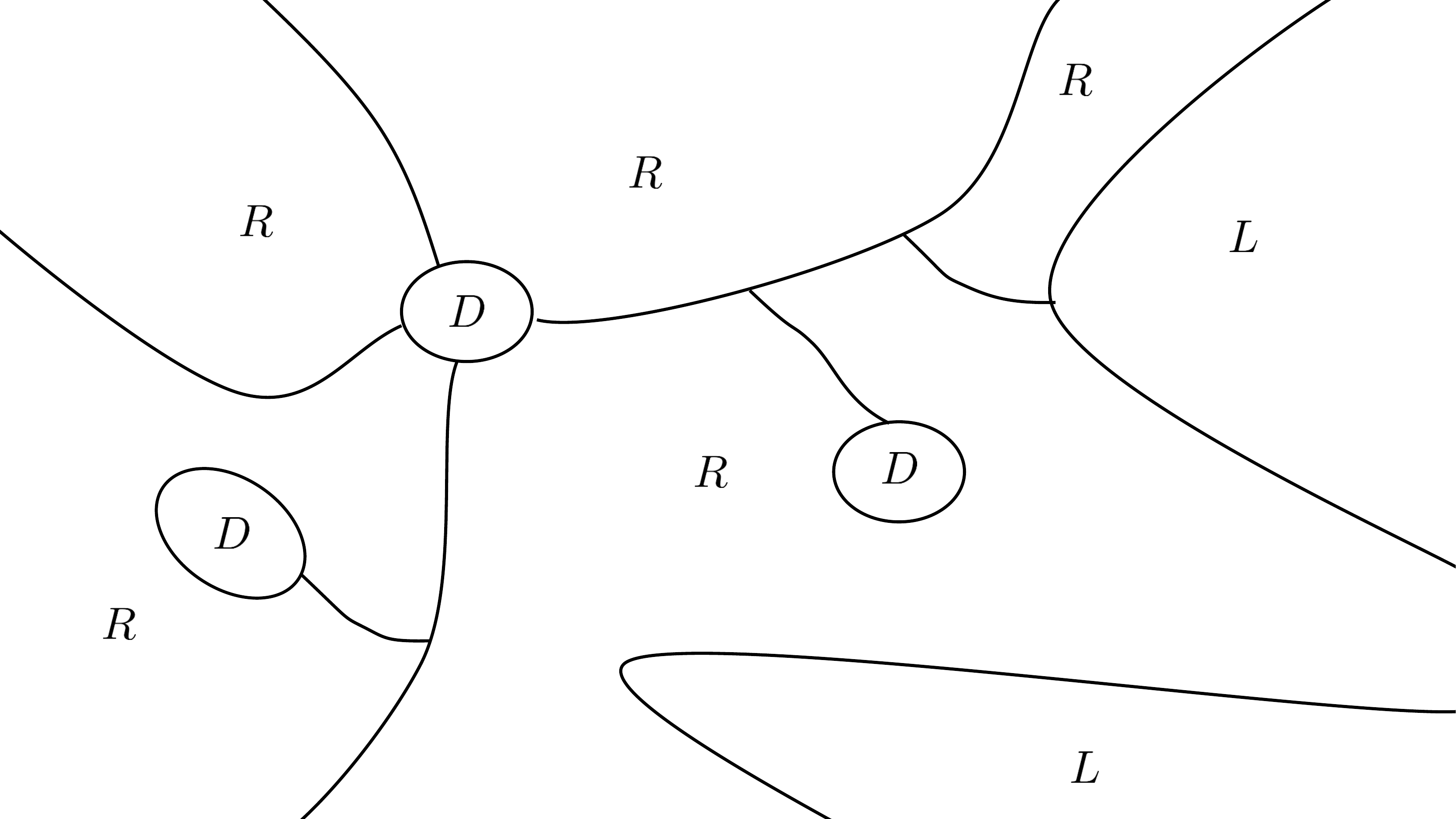}
  \caption{An illustration of a graph showing D, R and L components.}\label{figDRL}
\end{figure}

Very roughly speaking, R-components give rise to exactly two singular values, which are critical values, of bounded degree, and equal to $\pm 1.$  Each D-component gives rise to an additional critical point, which can be of any degree, and an associated critical value lying in $\D$. Each L-component leads to the addition of a finite asymptotic value, which also lies in $\D$. It follows that the function $f$ always lies in the class $\B$ (since $S(f) \subset \overline{\D}$), and may even lie in the class $\S$ if appropriate choices are made on the D- and L-components.

Bishop used his result to construct several class $\B$ and class $\S$ functions with novel properties. We discuss the following specific example, which is particularly noteworthy since it was shown in \cite{MR1196102, MR857196} that no function in class $\S$ has wandering domains.  
\begin{theorem}
\label{theo:bish2}
There is a {\tef} $f \in \B$ such that $f$ has wandering domains.
\end{theorem}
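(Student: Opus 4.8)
The plan is to construct $f$ by Bishop's method of quasiconformal folding, \cite[Theorem 7.2]{Bish3}, as outlined above. Everything reduces to producing, together with the entire function, an infinite chain of pairwise disjoint bounded Jordan domains $(\Delta_n)_{n\geq 0}$ such that $f(\overline{\Delta_n})\subset\Delta_{n+1}$ for all $n$, and such that the chain \emph{oscillates}: $\Delta_{n_k}\subset\{w:|w|>k\}$ along some increasing subsequence $(n_k)$, while $\bigcup_{j}\Delta_{m_j}$ is bounded along some complementary subsequence $(m_j)$. Granting such a chain, the wandering domain is essentially immediate. For $n\geq 1$ the map $f^n|_{\Delta_0}$ takes values in $\Delta_n$, which is disjoint from $\Delta_0$, so the family $(f^n|_{\Delta_0})_{n\geq 1}$ omits the nonempty open set $\Delta_0$ and hence is normal on $\Delta_0$ by Montel's theorem; thus $\Delta_0\subset F(f)$. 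Let $U_0$ be the Fatou component containing $\Delta_0$; since $F(f)$ is forward invariant and $f^n(\Delta_0)\subset\Delta_n$, we get $f^n(U_0)\subset U_n$, where $U_n$ is the component containing $\Delta_n$. Now $U_0$ cannot be pre-periodic: otherwise the orbit of any $z\in\Delta_0$ would eventually lie in a periodic cycle of Fatou components, which --- since Theorem~\ref{noescaping} excludes Baker domains in $\B$ --- must be an attracting, parabolic, or Siegel cycle, in each case forcing the orbit of $z$ to be \emph{bounded}; this contradicts $f^{n_k}(z)\in\Delta_{n_k}\subset\{w:|w|>k\}$. Hence $U_0,f(U_0),f^2(U_0),\ldots$ are infinitely many distinct Fatou components, so $U_0$ is a wandering domain. (The bounded subsequence is in fact forced: an escaping point of $F(f)$ is impossible in $\B$, again by Theorem~\ref{noescaping}, so the chain \emph{has} to oscillate.)

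Next I would design the plane graph feeding into \cite[Theorem 7.2]{Bish3}. Take $\Gamma$ to have a single unbounded \emph{R-component} $R$ together with a sequence of bounded \emph{D-components} $D_1,D_2,\ldots$ escaping to infinity, and no L-components. The model map $g$ is then conformally conjugate to $\cosh$ on $R$ (with critical values $\pm 1$), and on each $D_m$ conformally conjugate to $z\mapsto z^{d_m}$, postcomposed with a quasiconformal self-map of $\D$ carrying $0$ to a chosen point $v_m\in\D$, so that $D_m$ contributes a single critical point of degree $d_m$ with critical value $v_m$. I would then route the orbit so that the chain $(\Delta_n)$ alternates between ``troughs'' --- small discs near the origin, inside $R$ --- and ``peaks'' --- discs far out, either deep inside $R$ (where the $\cosh$-chart has large real part, so $g$ is enormous) or inside a far-out $D_m$. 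Between a trough and the next peak the iterates climb up through $R$; from a peak deep in $R$, $g$ sends the piece far out, and it is placed inside a far-out $D_m$, a further peak; from inside $D_m$, $g$ returns the piece to a small disc around $v_m\in\D$, the next trough; and so on. The adjustable data --- the placement of the edges of $\Gamma$ far out, the conformal charts $D_m\to\D$, the target points $v_m$, and the degrees $d_m\to\infty$ (large degrees contracting the peak inside $D_m$ to a genuinely tiny trough) --- is chosen to make the chain relations $g(\overline{\Delta_n})\subset\Delta_{n+1}$ hold exactly, to keep all $\Delta_n$ pairwise disjoint, and to push the peaks progressively further out. One then checks that $\Gamma$, so drawn, satisfies the bounded-geometry and separation hypotheses of \cite[Theorem 7.2]{Bish3}; this is routine but needs care in the far-out placement of the $D_m$.

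With the graph in hand, folding produces a quasiregular $\psi:\C\to\C$ agreeing with the model map off a thin neighbourhood of $\Gamma$, and the measurable Riemann mapping theorem then yields a quasiconformal $\phi:\C\to\C$, with complex dilatation supported in that neighbourhood, such that $f:=\psi\circ\phi^{-1}$ is entire. Since the $\Delta_n$ lie in the interiors of complementary components, well away from $\Gamma$, the chain relations $f(\overline{\Delta_n})\subset\Delta_{n+1}$ survive the passage from $g$ to $f$ (after transporting the $\Delta_n$ by $\phi$), provided the construction is carried out with margins exceeding the fixed distortion of $\phi$ and the discrepancy between $\psi$ and the model near $\Gamma$. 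Finally, $f\in\B$: by Bishop's recipe the singular values of $f$ are $\pm 1$ together with the $\phi$-images of the points $v_m$, all at bounded distance from $\overline{\D}$, so $S(f)$ is bounded; since the construction uses infinitely many distinct $v_m$, we have $f\notin\S$, consistently with --- indeed as forced by --- the fact \cite{MR1196102,MR857196} that no function in $\S$ has a wandering domain.

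The heart of the proof, and the step I expect to be the main obstacle, is the routing in the second paragraph: one must simultaneously steer a single infinite orbit through an infinite sequence of complementary components so that it oscillates (escaping along one subsequence, bounded along another), keep the infinitely many domains $\Delta_n$ pairwise disjoint, respect the geometric constraints folding imposes on $\Gamma$, and leave enough slack that neither the $O(1)$ quasiconformal distortion from $\phi$ nor the error between $\psi$ and the model near $\Gamma$ can break any chain relation --- all while making sure folding does not introduce spurious extra tracts or singular values that would spoil either $f\in\B$ or the dynamics. The oscillation is not a technicality: Theorem~\ref{noescaping} forbids a Fatou component of a class $\B$ function from containing any escaping point, so the orbit of the wandering domain is obliged to return to a bounded region infinitely often, and reconciling this with the requirement that the $\Delta_n$ be pairwise disjoint (so that the component genuinely moves) is the structural tension around which the whole construction has to be organised.
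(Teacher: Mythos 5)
Your proposal is correct and follows essentially the same route as the paper: Bishop's quasiconformal folding with a graph consisting of R-components and D-components only, D-components re-routing the orbit back to a bounded region so that the forward orbit of a small disc both escapes along a subsequence and returns to a bounded set along another, and the resulting Fatou component is wandering because a (pre-)periodic component of a class $\B$ function cannot exhibit both bounded and unbounded sub-orbits (no Baker domains by Theorem~\ref{noescaping}, and attracting, parabolic, and Siegel cycles all have bounded orbits). The paper realises the chain concretely by shadowing the real orbit of $\frac{1}{2}$ under the R-component map $z\mapsto\cosh(\lambda\sinh z)$ with D-components placed at imaginary part $\pm\pi$, but the architecture and the wandering-domain argument are the same as yours.
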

\begin{proof}
We outline the proof. The construction is delicate, in that the properties of the D-components depend on the function resulting from \cite[Theorem 7.2]{Bish3}. We do not attempt to discuss this detail.

The graph used is symmetrical about the real and imaginary axes, and does not use L-components. One R-component is the strip $$S_+ := \{ z = x + iy : x > 0, |y| < \pi / 2 \}.$$ The quasiconformal map in $S_+$ is the map $z \mapsto \cosh(\lambda\sinh(z))$, where $\lambda \in \pi \N$ is chosen sufficiently large so that the point $\frac{1}{2}$ iterates to infinity along the real axis	.

\begin{figure}[ht]
	\centering
	\includegraphics[width=12cm,height=9cm]{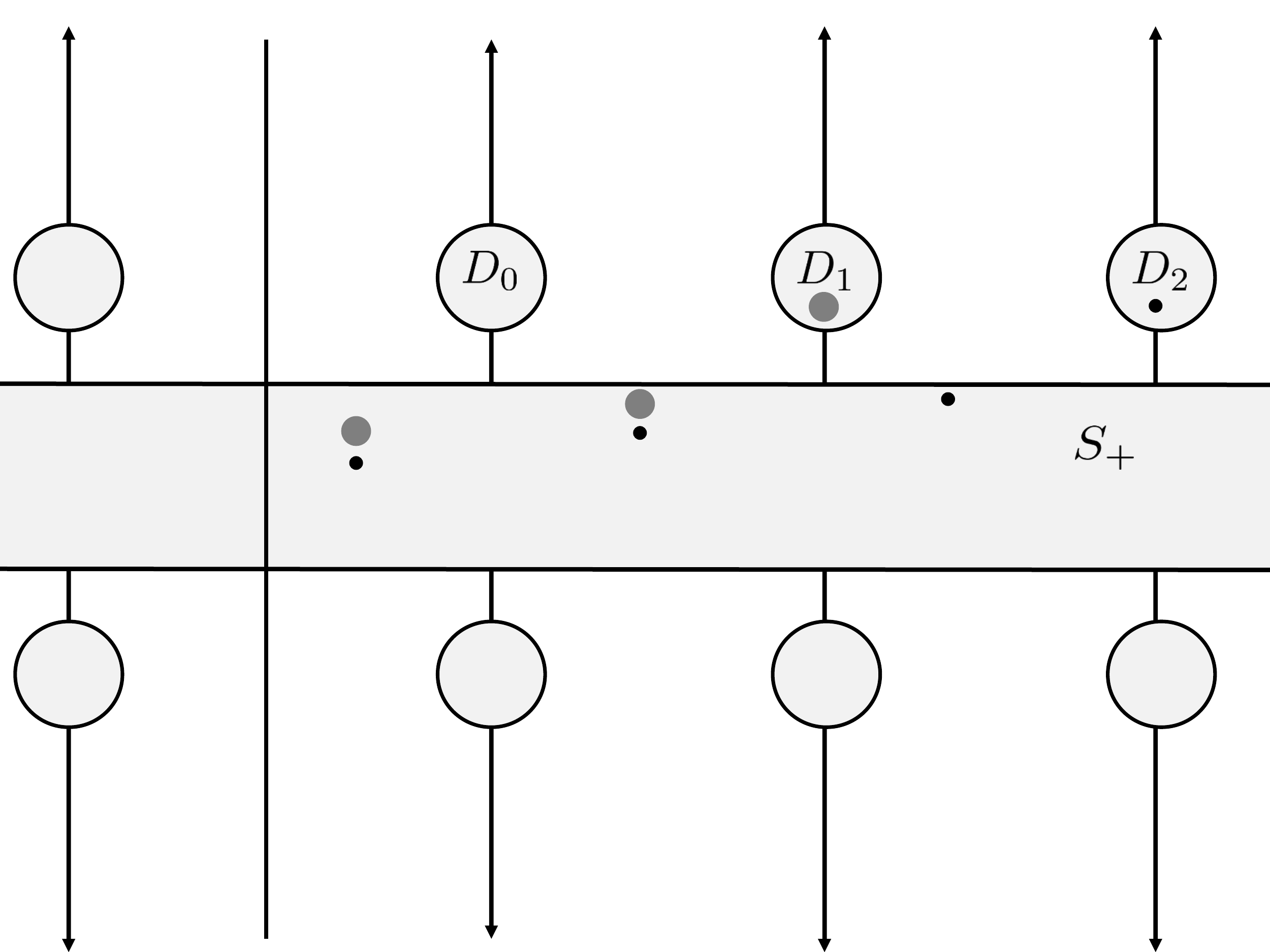}
	\caption{Sketch of the graph for the proof of Theorem~\ref{theo:bish2}. Three consecutive images of the disc $D_0$ are shown in dark gray, and four consecutive images of the disc $D_1$ are shown in black.}
  \label{fig41}
\end{figure}

The D-components are disjoint discs of unit radius, centred at points of imaginary part $\pm \pi$. The quasiconformal maps on these D-components are compositions of a translation to $\mathbb{D}$, a power map of high degree and a quasiconformal map that takes the origin to a point close to $\frac{1}{2}$. The positioning of the D-components, the degree of the power map and the choice of the point close to $\frac{1}{2}$ are all carefully controlled.

The remainder of the complex plane is divided into R-components, but since the dynamics in these components does not affect the example, the quasiconformal maps are not specified.

Choosing a small domain $U$ in $S_+$ close to some point with real part $\frac{1}{2}$ and with positive imaginary part, it is shown that the iterates of $ U $ under $ f $ follow the orbit of $\frac{1}{2}$ until -- through careful choice of the location of the D-components -- the $n$th iterate (say) lands in a D-component. The quasiconformal map in this D-component is selected so as to reduce the diameter of $f^n(U)$ by a large factor (by using a power map of sufficiently high degree), and return it even closer to $\frac{1}{2}$. Subsequent iterates again follow the orbit of~$\frac{1}{2}$ but, because they start closer to this point, they do so for longer before landing in a D-component further from the origin.  Bishop shows that $U \subset F(f)$. It follows that $ U $ is a wandering domain since the iterates in $U$ have both bounded and unbounded sub-orbits, and this behaviour is impossible in a periodic or pre-periodic domain.
\end{proof}
The construction in Theorem~\ref{theo:bish2} was recently modified in \cite{FJL} to give a {\tef} $f \in \B$ with wandering domains in which $f$ is univalent. \\

%
%
%
\emph{Acknowledgment:} The author is very grateful to Simon Albrecht, Vasiliki Evdoridou and John Osborne for their careful readings of early drafts of this survey, and also to a number of others who gave valuable feedback. 
%
%
%
\bibliographystyle{alpha}
\bibliography{../../Research.References}

\newcommand{\etalchar}[1]{$^{#1}$}
\begin{thebibliography}{BHK{\etalchar{+}}93}

\bibitem[AB12]{MR2885574}
Magnus Aspenberg and Walter Bergweiler.
\newblock Entire functions with {J}ulia sets of positive measure.
\newblock {\em Math. Ann.}, 352(1):27--54, 2012.

\bibitem[AO93]{MR1182980}
Jan~M. Aarts and Lex~G. Oversteegen.
\newblock The geometry of {J}ulia sets.
\newblock {\em Trans. Amer. Math. Soc.}, 338(2):897--918, 1993.

\bibitem[ARG17]{MR3620879}
Nada Alhabib and Lasse Rempe-Gillen.
\newblock Escaping endpoints explode.
\newblock {\em Comput. Methods Funct. Theory}, 17(1):65--100, 2017.

\bibitem[Bak63]{MR0153842}
Irvine~Noel Baker.
\newblock Multiply connected domains of normality in iteration theory.
\newblock {\em Math. Z.}, 81:206--214, 1963.

\bibitem[Bak75]{MR0402044}
I.~N. Baker.
\newblock The domains of normality of an entire function.
\newblock {\em Ann. Acad. Sci. Fenn. Ser. A I Math.}, 1(2):277--283, 1975.

\bibitem[Bar07]{MR2318569}
Krzysztof Bara{\'n}ski.
\newblock Trees and hairs for some hyperbolic entire maps of finite order.
\newblock {\em Math. Z.}, 257(1):33--59, 2007.

\bibitem[Bar08]{MR2464786}
Krzysztof Bara\'nski.
\newblock Hausdorff dimension of hairs and ends for entire maps of finite
  order.
\newblock {\em Math. Proc. Cambridge Philos. Soc.}, 145(3):719--737, 2008.

\bibitem[BE95]{MR1344897}
W.~Bergweiler and A.~E. Eremenko.
\newblock On the singularities of the inverse to a meromorphic function of
  finite order.
\newblock {\em Rev. Mat. Iberoamericana}, 11(2):355--373, 1995.

\bibitem[Bea91]{beardon}
A.F. Beardon.
\newblock {\em Iteration of rational functions}.
\newblock Springer, 1991.

\bibitem[Ben17]{AnnaLasse}
L.~Benini, A; Rempe-Gillen.
\newblock A landing theorem for entire functions with bounded post-singular
  sets.
\newblock {\em Preprint, arXiv:1711.10780v2}, 2017.

\bibitem[Ber93]{MR1216719}
Walter Bergweiler.
\newblock Iteration of meromorphic functions.
\newblock {\em Bull. Amer. Math. Soc. (N.S.)}, 29(2):151--188, 1993.

\bibitem[BF14]{MR3445628}
Bodil Branner and N\'uria Fagella.
\newblock {\em Quasiconformal surgery in holomorphic dynamics}, volume 141 of
  {\em Cambridge Studies in Advanced Mathematics}.
\newblock Cambridge University Press, Cambridge, 2014.
\newblock With contributions by Xavier Buff, Shaun Bullett, Adam L. Epstein,
  Peter Ha\"\i ssinsky, Christian Henriksen, Carsten L. Petersen, Kevin M.
  Pilgrim, Tan Lei and Michael Yampolsky.

\bibitem[BFRG15]{MR3433280}
Walter Bergweiler, N{\'u}ria Fagella, and Lasse Rempe-Gillen.
\newblock Hyperbolic entire functions with bounded {F}atou components.
\newblock {\em Comment. Math. Helv.}, 90(4):799--829, 2015.

\bibitem[BH99]{MR1684251}
W.~Bergweiler and A.~Hinkkanen.
\newblock On semiconjugation of entire functions.
\newblock {\em Math. Proc. Cambridge Philos. Soc.}, 126(3):565--574, 1999.

\bibitem[BHK{\etalchar{+}}93]{MR1234740}
Walter Bergweiler, Mako Haruta, Hartje Kriete, Hans-G\"unter Meier, and Norbert
  Terglane.
\newblock On the limit functions of iterates in wandering domains.
\newblock {\em Ann. Acad. Sci. Fenn. Ser. A I Math.}, 18(2):369--375, 1993.

\bibitem[Bis15a]{Bish3}
Christopher~J. Bishop.
\newblock Constructing entire functions by quasiconformal folding.
\newblock {\em Acta Math.}, 214(1):1--60, 2015.

\bibitem[Bis15b]{MR3384512}
Christopher~J. Bishop.
\newblock Models for the {E}remenko-{L}yubich class.
\newblock {\em J. Lond. Math. Soc. (2)}, 92(1):202--221, 2015.

\bibitem[Bis17]{Bish4}
Christopher~J. Bishop.
\newblock Models for the {S}peiser class.
\newblock {\em Proc. Lond. Math. Soc.}, 114(5):765--797, 2017.

\bibitem[BJR12]{MR2902745}
Krzysztof Bara{\'n}ski, Xavier Jarque, and Lasse Rempe.
\newblock Brushing the hairs of transcendental entire functions.
\newblock {\em Topology Appl.}, 159(8):2102--2114, 2012.

\bibitem[BK07]{MR2290468}
Krzysztof Bara\'nski and Bogus{\l}awa Karpi\'nska.
\newblock Coding trees and boundaries of attracting basins for some entire
  maps.
\newblock {\em Nonlinearity}, 20(2):391--415, 2007.

\bibitem[BKS09]{MR2559123}
Walter Bergweiler, Bogus{\l}awa Karpi{\'n}ska, and Gwyneth~M. Stallard.
\newblock The growth rate of an entire function and the {H}ausdorff dimension
  of its {J}ulia set.
\newblock {\em J. Lond. Math. Soc. (2)}, 80(3):680--698, 2009.

\bibitem[BKZ09]{MR2480096}
Krzysztof Bara{\'n}ski, Bogus{\l}awa Karpi{\'n}ska, and Anna Zdunik.
\newblock Hyperbolic dimension of {J}ulia sets of meromorphic maps with
  logarithmic tracts.
\newblock {\em International Mathematics Research Notices}, 2009(4):615--624,
  2009.

\bibitem[BM07]{MR2492498}
A.~F. Beardon and D.~Minda.
\newblock The hyperbolic metric and geometric function theory.
\newblock In {\em Quasiconformal mappings and their applications}, pages 9--56.
  Narosa, New Delhi, 2007.

\bibitem[CG93]{MR1230383}
Lennart Carleson and Theodore~W. Gamelin.
\newblock {\em Complex dynamics}.
\newblock Universitext: Tracts in Mathematics. Springer-Verlag, New York, 1993.

\bibitem[Cha89]{MR1002079}
W\l odzimierz~J. Charatonik.
\newblock The {L}elek fan is unique.
\newblock {\em Houston J. Math.}, 15(1):27--34, 1989.

\bibitem[{Cui}16]{2016arXiv160804600C}
W.~{Cui}.
\newblock {Lebesgue measure of escaping sets of entire functions}.
\newblock {\em Preprint, arXiv:1608.04600v2}, August 2016.

\bibitem[DK84]{MR758892}
Robert~L. Devaney and Micha{\l} Krych.
\newblock Dynamics of {${\rm exp}(z)$}.
\newblock {\em Ergodic Theory Dynam. Systems}, 4(1):35--52, 1984.

\bibitem[EL84]{MR769199}
A.~\`E. Er\"emenko and M.~Yu. Lyubich.
\newblock Iterations of entire functions.
\newblock {\em Dokl. Akad. Nauk SSSR}, 279(1):25--27, 1984.

\bibitem[EL87]{MR918638}
A.~E. Eremenko and M.~Yu. Lyubich.
\newblock Examples of entire functions with pathological dynamics.
\newblock {\em J. Lond. Math. Soc. (2)}, 36(3):458--468, 1987.

\bibitem[EL92]{MR1196102}
A.~E. Eremenko and M.~Yu. Lyubich.
\newblock Dynamical properties of some classes of entire functions.
\newblock {\em Ann. Inst. Fourier (Grenoble)}, 42(4):989--1020, 1992.

\bibitem[Ere89]{MR1102727}
A.~E. Eremenko.
\newblock On the iteration of entire functions.
\newblock {\em Dynamical systems and ergodic theory ({W}arsaw 1986)},
  23:339--345, 1989.

\bibitem[ES18]{VassoDave}
V.~Evdoridou and D.~Sixsmith.
\newblock The topology of the set of non-escaping endpoints.
\newblock {\em Preprint, arXiv:1802.02738v1}, 2018.

\bibitem[Eul78]{Euler}
Leonhard Euler.
\newblock De formulis exponentialibus replicatis.
\newblock {\em Acta Academiae Scientiarum Petropolitanae}, 1:38--60, 1778.
\newblock Also in Opera Omnia, Series Prima, vol. 15. pp. 268-297.

\bibitem[Fat19]{MR1504787}
P.~Fatou.
\newblock Sur les \'equations fonctionnelles.
\newblock {\em Bull. Soc. Math. France}, 47:161--271, 1919.

\bibitem[Fat20a]{MR1504792}
P.~Fatou.
\newblock Sur les \'equations fonctionnelles.
\newblock {\em Bull. Soc. Math. France}, 48:33--94, 1920.

\bibitem[Fat20b]{MR1504797}
P.~Fatou.
\newblock Sur les \'equations fonctionnelles.
\newblock {\em Bull. Soc. Math. France}, 48:208--314, 1920.

\bibitem[Fat26]{MR1555220}
P.~Fatou.
\newblock Sur l'it\'eration des fonctions transcendantes enti\`eres.
\newblock {\em Acta Math.}, 47(4):337--370, 1926.

\bibitem[FJL17]{FJL}
N.~{Fagella}, X.~{Jarque}, and K.~{L}azebnik.
\newblock Univalent wandering domains in the {E}remenko-{L}yubich class.
\newblock {\em Preprint, arXiv:1711.10629v1}, 2017.

\bibitem[For91]{MR1185074}
Otto Forster.
\newblock {\em Lectures on {R}iemann surfaces}, volume~81 of {\em Graduate
  Texts in Mathematics}.
\newblock Springer-Verlag, New York, 1991.
\newblock Translated from the 1977 German original by Bruce Gilligan, Reprint
  of the 1981 English translation.

\bibitem[GK86]{MR857196}
Lisa~R. Goldberg and Linda Keen.
\newblock A finiteness theorem for a dynamical class of entire functions.
\newblock {\em Ergodic Theory Dynam. Systems}, 6(2):183--192, 1986.

\bibitem[Gro18]{MR1511920}
Wilhelm Gross.
\newblock Eine ganze {F}unktion, f\"ur die jede komplexe {Z}ahl
  {K}onvergenzwert ist.
\newblock {\em Math. Ann.}, 79(1-2):201--208, 1918.

\bibitem[Her98]{MR1642181}
M.~E. Herring.
\newblock Mapping properties of {F}atou components.
\newblock {\em Ann. Acad. Sci. Fenn. Math.}, 23(2):263--274, 1998.

\bibitem[Ive14]{Iversen}
F.~Iversen.
\newblock {\em Recherches sur les fonctions inverses des fonctions
  m\'{e}romorphes}.
\newblock PhD thesis, Helsingfors, 1914.

\bibitem[Jul18]{julia1918memoire}
G.~Julia.
\newblock Memoire sur l'iteration des fonctions rationnelles.
\newblock {\em J. Math. Pures Appl}, 8(1):47--245, 1918.

\bibitem[Kar99]{MR1696203}
Boguslawa Karpi{\'n}ska.
\newblock Hausdorff dimension of the hairs without endpoints for {$\lambda\exp
  z$}.
\newblock {\em C. R. Acad. Sci. Paris S\'er. I Math.}, 328(11):1039--1044,
  1999.

\bibitem[Lel61]{MR0133806}
A.~Lelek.
\newblock On plane dendroids and their end points in the classical sense.
\newblock {\em Fund. Math.}, 49:301--319, 1960/1961.

\bibitem[May90]{MR1053806}
John~C. Mayer.
\newblock An explosion point for the set of endpoints of the {J}ulia set of
  {$\lambda\exp(z)$}.
\newblock {\em Ergodic Theory Dynam. Systems}, 10(1):177--183, 1990.

\bibitem[MB09]{HMBthesis}
Helena Mihaljevi\'c-Brandt.
\newblock {\em Topological dynamics of transcendental entire functions}.
\newblock PhD thesis, University of Liverpool, 2009.

\bibitem[MB10]{MR2650792}
Helena Mihaljevi\'c-Brandt.
\newblock A landing theorem for dynamic rays of geometrically finite entire
  functions.
\newblock {\em J. Lond. Math. Soc. (2)}, 81(3):696--714, 2010.

\bibitem[MB12]{MR2912445}
Helena Mihaljevi\'c-Brandt.
\newblock Semiconjugacies, pinched {C}antor bouquets and hyperbolic orbifolds.
\newblock {\em Trans. Amer. Math. Soc.}, 364(8):4053--4083, 2012.

\bibitem[Mil06]{MR2193309}
John Milnor.
\newblock {\em Dynamics in one complex variable}, volume 160 of {\em Annals of
  Mathematics Studies}.
\newblock Princeton University Press, Princeton, NJ, third edition, 2006.

\bibitem[Mis81]{MR627790}
Micha{\l} Misiurewicz.
\newblock On iterates of {$e^{z}$}.
\newblock {\em Ergodic Theory Dynam. Systems}, 1(1):103--106, 1981.

\bibitem[Nad92]{MR1192552}
Sam~B. Nadler, Jr.
\newblock {\em Continuum theory}, volume 158 of {\em Monographs and Textbooks
  in Pure and Applied Mathematics}.
\newblock Marcel Dekker, Inc., New York, 1992.

\bibitem[Nev70]{nevanlinna}
R.~Nevanlinna.
\newblock {\em Analytic functions}.
\newblock Translated from the second German edition by Phillip Emig. Die
  Grundlehren der mathematischen Wissenschaften, Band 162. Springer-Verlag, New
  York-Berlin, 1970.

\bibitem[Pom92]{MR1217706}
Ch. Pommerenke.
\newblock {\em Boundary behaviour of conformal maps}, volume 299 of {\em
  Grundlehren der Mathematischen Wissenschaften [Fundamental Principles of
  Mathematical Sciences]}.
\newblock Springer-Verlag, Berlin, 1992.

\bibitem[PS72]{MR0344042}
G.~P\'olya and G.~Szeg\"{o}.
\newblock {\em Problems and theorems in analysis. {V}ol. {I}: {S}eries,
  integral calculus, theory of functions}.
\newblock Springer-Verlag, New York-Berlin, 1972.
\newblock Translated from the German by D. Aeppli, Die Grundlehren der
  mathematischen Wissenschaften, Band 193.

\bibitem[Rem03]{lassethesis}
Lasse Rempe.
\newblock {\em Dynamics of Exponential Maps}.
\newblock PhD thesis, Christian-Albrechts-Universit\"{a}t zu Kiel, 2003.

\bibitem[Rem07]{MR2346947}
Lasse Rempe.
\newblock On a question of {E}remenko concerning escaping components of entire
  functions.
\newblock {\em Bull. Lond. Math. Soc.}, 39(4):661--666, 2007.

\bibitem[Rem09]{MR2570071}
Lasse Rempe.
\newblock Rigidity of escaping dynamics for transcendental entire functions.
\newblock {\em Acta Math.}, 203(2):235--267, 2009.

\bibitem[{Rem}16]{lassearclike}
Lasse {Rempe-Gillen}.
\newblock Arc-like continua, {J}ulia sets of entire functions, and {E}remenko's
  conjecture.
\newblock {\em Preprint, arXiv:1610.06278v2}, 2016.

\bibitem[RG14]{MR3214678}
Lasse Rempe-Gillen.
\newblock Hyperbolic entire functions with full hyperbolic dimension and
  approximation by {E}remenko-{L}yubich functions.
\newblock {\em Proc. Lond. Math. Soc. (3)}, 108(5):1193--1225, 2014.

\bibitem[RGS17]{MR3671560}
Lasse Rempe-Gillen and Dave Sixsmith.
\newblock Hyperbolic entire functions and the {E}remenko-{L}yubich class: class
  $\mathcal {B}$ or not class $\mathcal {B}$?
\newblock {\em Math. Z.}, 286(3-4):783--800, 2017.

\bibitem[Ric93]{MR1238941}
Seppo Rickman.
\newblock {\em Quasiregular mappings}, volume~26 of {\em Ergebnisse der
  Mathematik und ihrer Grenzgebiete (3)}.
\newblock Springer-Verlag, Berlin, 1993.

\bibitem[RRRS11]{MR2753600}
G{\"u}nter Rottenfusser, Johannes R{\"u}ckert, Lasse Rempe, and Dierk
  Schleicher.
\newblock Dynamic rays of bounded-type entire functions.
\newblock {\em Ann. of Math. (2)}, 173(1):77--125, 2011.

\bibitem[RRS10]{MR2675603}
Lasse Rempe, Philip~J. Rippon, and Gwyneth~M. Stallard.
\newblock Are {D}evaney hairs fast escaping?
\newblock {\em J. Difference Equ. Appl.}, 16(5-6):739--762, 2010.

\bibitem[RS08]{MR2458810}
G{\"u}nter Rottenfusser and Dierk Schleicher.
\newblock Escaping points of the cosine family.
\newblock In {\em Transcendental dynamics and complex analysis}, volume 348 of
  {\em London Math. Soc. Lecture Note Ser.}, pages 396--424. Cambridge Univ.
  Press, Cambridge, 2008.

\bibitem[RS12]{Rippon01102012}
P.~J. Rippon and G.~M. Stallard.
\newblock Fast escaping points of entire functions.
\newblock {\em Proc. Lond. Math. Soc. (3)}, 105(4):787--820, 2012.

\bibitem[Sch07a]{MR2286634}
Dierk Schleicher.
\newblock The dynamical fine structure of iterated cosine maps and a dimension
  paradox.
\newblock {\em Duke Math. J.}, 136(2):343--356, 2007.

\bibitem[Sch07b]{schubertthesis}
Hendrik Schubert.
\newblock {\em \"{U}ber die {H}ausdorff-{D}imension der {J}uliamenge von
  {F}unktionen endlicher {O}rdnung}.
\newblock PhD thesis, Christian-Albrechts-Universit\"{a}t zu Kiel, 2007.

\bibitem[Sch10]{MR2648691}
Dierk Schleicher.
\newblock Dynamics of entire functions.
\newblock In {\em Holomorphic dynamical systems}, volume 1998 of {\em Lecture
  Notes in Math.}, pages 295--339. Springer, Berlin, 2010.

\bibitem[Six14]{MR3233575}
David~J. Sixsmith.
\newblock A new characterisation of the {E}remenko-{L}yubich class.
\newblock {\em J. Anal. Math.}, 123:95--105, 2014.

\bibitem[Sma67]{smaledynamics}
S.~Smale.
\newblock Differentiable dynamical systems.
\newblock {\em Bull. Amer. Math. Soc.}, 73:747--817, 1967.

\bibitem[Spe29]{MR1509398}
Andreas Speiser.
\newblock Probleme aus dem {G}ebiet der ganzen transzendenten {F}unktionen.
\newblock {\em Comment. Math. Helv.}, 1(1):289--312, 1929.

\bibitem[SRG15]{MR3447747}
Zhaiming Shen and Lasse Rempe-Gillen.
\newblock The exponential map is chaotic: an invitation to transcendental
  dynamics.
\newblock {\em Amer. Math. Monthly}, 122(10):919--940, 2015.

\bibitem[Sta91]{MR1145621}
Gwyneth~M. Stallard.
\newblock The {H}ausdorff dimension of {J}ulia sets of entire functions.
\newblock {\em Ergodic Theory Dynam. Systems}, 11(4):769--777, 1991.

\bibitem[Sta96]{MR1357062}
Gwyneth~M. Stallard.
\newblock The {H}ausdorff dimension of {J}ulia sets of entire functions. {II}.
\newblock {\em Math. Proc. Cambridge Philos. Soc.}, 119(3):513--536, 1996.

\bibitem[Sta97]{MR1458228}
Gwyneth~M. Stallard.
\newblock The {H}ausdorff dimension of {J}ulia sets of entire functions. {III}.
\newblock {\em Math. Proc. Cambridge Philos. Soc.}, 122(2):223--244, 1997.

\bibitem[Sta00]{MR1760674}
Gwyneth~M. Stallard.
\newblock The {H}ausdorff dimension of {J}ulia sets of entire functions. {IV}.
\newblock {\em J. Lond. Math. Soc. (2)}, 61(2):471--488, 2000.

\bibitem[Sul85]{MR819553}
Dennis Sullivan.
\newblock Quasiconformal homeomorphisms and dynamics. {I}. {S}olution of the
  {F}atou-{J}ulia problem on wandering domains.
\newblock {\em Ann. of Math. (2)}, 122(3):401--418, 1985.

\bibitem[SZ03]{MR1956142}
Dierk Schleicher and Johannes Zimmer.
\newblock Escaping points of exponential maps.
\newblock {\em J. Lond. Math. Soc. (2)}, 67(2):380--400, 2003.

\bibitem[Vuo88]{MR950174}
Matti Vuorinen.
\newblock {\em Conformal geometry and quasiregular mappings}, volume 1319 of
  {\em Lecture Notes in Mathematics}.
\newblock Springer-Verlag, Berlin, 1988.

\end{thebibliography}
\end{document}